\documentclass[11pt]{article}
\usepackage[margin=1in]{geometry}
\usepackage[pdftex]{graphicx}
\usepackage{multirow}
\usepackage{setspace}
\usepackage{combelow}
\usepackage{hyperref}
\usepackage{sectsty}
\usepackage{amsmath}
\usepackage{amsthm}
\usepackage{amssymb}
\usepackage{xcolor}
\usepackage{tikz}      % core TikZ
\usepackage{tikz-cd}
\usepackage{graphicx}
\usepackage{mathdots}
\usepackage{subcaption}
\usepackage{enumitem}
\usepackage{float}
\usepackage{amscd}
\usepackage{comment}
\usepackage{mathtools}

\usetikzlibrary{decorations.pathreplacing}
\usetikzlibrary{cd}
\newtheorem{example}{Example}
\newtheorem{theorem}{Theorem}

\newtheorem{definition}{Definition}
\newtheorem{proposition}[theorem]{Proposition}
\newtheorem{lemma}[theorem]{Lemma}

\newtheorem{corollary}[theorem]{Corollary}
\newtheorem{remark}[theorem]{Remark}

\newcommand{\BR}{\mathbb{R}}

\newcommand{\BC}{\mathbb{C}}

\newcommand{\BQ}{\mathbb{Q}}

\newcommand{\pro}{\operatorname{Coh^b_{pro}}}
\newcommand{\segal}{\mathcal{S}}

\newcommand{\fbeta}{\mathcal{F}^{\beta}}
\newcommand{\tbeta}{\mathcal{T}^{\beta}}
\newcommand{\abeta}{\mathcal{A}^{\beta}}
\newcommand{\abetator}{\mathcal{A}^{\beta}_{\operatorname{tor}}}
\newcommand{\abetatf}{\mathcal{A}^{\beta}_{\operatorname{t.f.}}}

\newcommand{\fbetastack}{\boldsymbol{\operatorname{Coh}}_{\operatorname{t.f.}}(X)}
\newcommand{\tbetastack}{\boldsymbol{\operatorname{Coh}}_{\operatorname{tor}}(X)}
\newcommand{\abetatorstack}{\boldsymbol{\operatorname{Coh}}_{\operatorname{tor}}(X,\tau^{\beta})}
\newcommand{\abetatfstack}{\boldsymbol{\operatorname{Coh}}_{\operatorname{t.f.}}(X,\tau^{\beta})}

\newcommand{\coh}{\operatorname{Coh}}
\newcommand{\cohbetass}{\operatorname{Coh}^{ss}_{\beta}(X)}

\newcommand{\perf}{\operatorname{Perf}(X)}
\newcommand{\perfstack}{\boldsymbol{\operatorname{Perf}}(X)}
\newcommand{\cohbetassstack}{\boldsymbol{\operatorname{Coh}}^{ss}_{\beta}(X)}
\newcommand{\cohflat}{\boldsymbol{\operatorname{Coh}}}

\newcommand{\framing}{\mathcal{V}}
\newcommand{\linebundle}{\mathcal{L}}

\newcommand{\copairs}{\boldsymbol{\cop}^{\beta}(X,\mathcal{V})}
\newcommand{\pairs}{\boldsymbol{\operatorname{P}}^{\beta}(X,\mathcal{V})}
\newcommand{\pairslb}{\boldsymbol{\operatorname{P}}^{\beta}(X,\mathcal{L})}
\newcommand{\Quotbeta}{\operatorname{Quot}^{\beta}(X,\mathcal{V})}
\newcommand{\Quotbetaone}{\operatorname{Quot}^{\beta}(X,\mathcal{L}[1])}
\newcommand{\Quotbetard}{\operatorname{Quot}_{r,d}^{\beta}(X,\mathcal{V})}
\newcommand{\Quotbetaonerd}{\operatorname{Quot}_{r,d}^{\beta}(X,\mathcal{L}[1])}
\newcommand{\bradstack}{\mathcal{B}^{\sigma - ss}_{r,d}(X,\mathcal{V})}
\newcommand{\brad}{\operatorname{{B}}^{\sigma - ss}_{r,d}(X,\mathcal{V})}
\newcommand{\bradstacklb}{\mathcal{B}^{\sigma - ss}_{r,d}(X,\mathcal{L})}
\newcommand{\bradlb}{\operatorname{{B}}^{\sigma - ss}_{r,d}(X,\mathcal{L})}
\newcommand{\dQuot}{\boldsymbol{\operatorname{Quot}}^{\beta}_{S}(X,\mathcal{V})}

\newcommand{\flags}{\boldsymbol{\operatorname{Perf}}^{\dagger}(X,\mathcal{V})}
\newcommand{\coflags}{\boldsymbol{\operatorname{Perf}}^{\ddagger}(X,\mathcal{V}[1])}

\newcommand{\pitfzero}{\pi^{\operatorname{t.f.}}_{0}}

\newcommand{\waldhausen}{\mathcal{S}_{\bullet}}

\newcommand{\cop}{\reflectbox{\ensuremath{\operatorname{P}}}}

\newcommand{\coext}{\reflectbox{\ensuremath{\operatorname{E}}}}

\subsectionfont{\large \textbf}

\title{Quot spaces in tilted hearts and Hall algebra modules}
\author{Niccolò Giacomini}
\date{}

\begin{document}

\maketitle

\begin{abstract}
    \noindent We construct a two--sided categorical action of the Hall algebra of semistable coherent sheaves of fixed slope on a curve $X$ on the derived category of certain Quot--spaces in tilted hearts on $X$. Following the philosophy in \cite{dps}, the action is induced by correspondence stacks that parameterize extensions of such quotients by semistable sheaves. In the process, we compare different moduli spaces on $X$: Quot--spaces, Bradlow pairs, and stable pairs in the sense of \cite{dps}.
\end{abstract}

\bigskip

\tableofcontents

\section{Introduction}
\noindent\textbf{Cohomological Hall algebras. }\textit{Cohomological Hall algebras (\textit{CoHA}'s)} are associative algebras that are modelled on the cohomology of moduli stacks. The procedure is as follows. Let $k$ be a field and let $\mathcal{A}$ be a $k$-linear abelian category. We set $\underline{\mathcal{A}}$ to be the moduli stack of objects in ${\mathcal{A}}$ and we set $\underline{\mathcal{A}}^{\operatorname{ext}}$ to be the moduli stack parametrizing extensions in $\mathcal{A}$. With these data, we set up the convolution diagram
\begin{equation}\label{convdiag}
   \begin{tikzcd}
	& {\underline{\mathcal{A}}^{\operatorname{ext}} } &&& {0 \rightarrow A' \rightarrow A \rightarrow A'' \rightarrow 0} & \\
	{\underline{\mathcal{A}} \times \underline{\mathcal{A}}} && {\underline{\mathcal{A}}} & {(A', A'')} && A
	\arrow["{p}"', from=1-2, to=2-1]
	\arrow["{q}", from=1-2, to=2-3]
	\arrow[maps to, from=1-5, to=2-4]
	\arrow[maps to, from=1-5, to=2-6]
\end{tikzcd}
    \end{equation}
Let $H^{BM}_*(\underline{\mathcal{A}})$ denote the Borel--Moore homology --- more generally any motivic Borel--Moore cohomology theory --- of the stack $\underline{\mathcal{A}}$. If the map $p$ is lci and the map $q$ is proper, the above diagram induces a map
\begin{equation}\label{product}
    q_*p^* \colon H^{BM}_*(\underline{\mathcal{A}})\otimes H^{BM}_*(\underline{\mathcal{A}}) \longrightarrow H^{BM}_*(\underline{\mathcal{A}}) \ 
\end{equation}
This makes the space $H^{BM}_*(\underline{\mathcal{A}})$ into an associative algebra. We refer to such algebra as the \textit{CoHA} associated to the category $\mathcal{A}$, and we denote it as $\operatorname{HA}_{\mathcal{A}}$. Now, consider a stack $\underline{\mathcal{A}}^{\operatorname{st}} \rightarrow \underline{\mathcal{A}}$, e.g. by introducing a \textit{framing and/or stability condition} on the objects of $\mathcal{A}$. Then, replacing $A$ and $A''$ in \eqref{convdiag} with stable objects in $\underline{\mathcal{A}}^{\operatorname{st}}$ yields a correspondence
\begin{equation}\label{action}
    \begin{tikzcd}
        &  (\underline{\mathcal{A}}^{\operatorname{ext}})^{\operatorname{st}} \arrow[dl, "{p}"'] \arrow[dr, "{q}"] & \\
 \underline{\mathcal{A}} \times \underline{\mathcal{A}}^{\operatorname{st}} && \underline{\mathcal{A}}^{\operatorname{st}} \ 
    \end{tikzcd}
    \end{equation}
for a suitable stack of extensions $(\underline{\mathcal{A}}^{\operatorname{ext}})^{\operatorname{st}} \rightarrow \underline{\mathcal{A}}^{\operatorname{ext}}$. With a similar procedure as before, this endows the space $H^{BM}_*(\underline{\mathcal{A}}^{\operatorname{st}})$ with the structure of a representation for the \textit{CoHA} $\operatorname{HA}_{\mathcal{A}}$. The geometric property encoding the associativity of a \textit{CoHA} action is that of a \textit{Hecke pattern}, in the sense of Definition 6.1 in \cite{mmsv23}. This property consists in asking that stable objects in $\mathcal{A}$ be preserved under extensions by objects in $\mathcal{A}$ --- see Example \ref{ex : hilbS}. However, Hecke patterns are very rare to find in the geometric setting. The few known examples of this procedure include \cite{mmsv23, Minets_2020, neguţ2022heckecorrespondencessmoothmoduli, dps, hausel2025pwmathcalh2, diaconescuhallalgebrasonedimensional}, and they all involve the category of zero dimensional sheaves on a smooth (quasi) projective surface, or torsion sheaves on a curve \cite{marian2026cohomologyquotschemesmooth, kaushik2026cohomologyhyperquotschemescurves}. \newline

\noindent\textbf{Doubles. }One of the main challenges in the theory of Hall algebras is computing their doubles, as we now explain. Suppose that we have a left and a right action of an algebra $\operatorname{A}$ on the same vector space $V$
\begin{equation}\label{leftright}
    \operatorname{A} \curvearrowright V \curvearrowleft \operatorname{A} \ . 
\end{equation}
Denoting as $\operatorname{A}^{op}$ the opposite algebra, we can ask whether we can place an algebra structure on $\operatorname{DA} = \operatorname{A}\otimes \operatorname{A}^{op}$ such that we have a naturally induced action
\begin{equation}\label{double action}
    \operatorname{DA \curvearrowright} V \ .
\end{equation}
Whenever this is possible, we refer to the algebra $\operatorname{DA}$ as the double of $\operatorname{A}$. This situation arises naturally in the context of Hall algebra actions. The most classical example involves the \textit{CoHA} associated with the category of zero dimensional sheaves on a surface, as defined by \cite{kapranov2022cohomologicalhallalgebrasurface}, generalizing \cite{Minets_2020, ss20, Zhao_2020}. The original motivation for these actions were the correspondences defined by Nakajima \cite{nakajimaheisenbergalgebrahilbertschemes} and Grojnowski \cite{grojnowski1995instantonsaffinealgebrasi}, later generalized by Negut \cite{neguţ2022heckecorrespondencessmoothmoduli} and Mellit--Minets--Schiffmann--Vasserot \cite{mmsv23}. 

\begin{example}\label{ex : hilbS}
    Let $S$ be a smooth projective surface, and denote as $\operatorname{Hilb}^n(S)$ the Hilbert scheme of $n$ points on $S$. It is shown in Proposition 7.5 of \cite{mmsv23} that the Hilbert scheme\footnote{More precisely, the Hilbert stack $\mathfrak{Hilb}^n(S) \cong \operatorname{Hilb}^n(S) \times B\mathbb{G}_m$.} $\operatorname{Hilb}^n(S)$ is a 2--sided Hecke pattern for the category $\operatorname{Coh}_0(S)$ of zero dimensional sheaves on $S$. In fact, consider a short exact sequence of coherent sheaves on $S$
    \[
        0 \rightarrow J \rightarrow I \rightarrow G \rightarrow 0
    \]
    where $G \in \operatorname{Coh}_0(S)$. Then, $I \in \operatorname{Hilb}^n(S)$ implies that $J \in \operatorname{Hilb}^{n-1}(S)$, and viceversa. This allows to construct a diagram as in \eqref{action} in two ways by setting
   \[\begin{tikzcd}
	& {0 \rightarrow J \rightarrow I \rightarrow G \rightarrow 0} &&& { 0 \rightarrow J \rightarrow I \rightarrow G \rightarrow 0} & \\
	{(G,I)} && {J} & {(J,G)} && I
	\arrow[maps to, from=1-2, to=2-1]
	\arrow[maps to, from=1-2, to=2-3]
	\arrow[maps to, from=1-5, to=2-4]
	\arrow[maps to, from=1-5, to=2-6]
\end{tikzcd}\]
    Then, "pull--push" via these two diagrams induce a left and a right action
    \[
       \operatorname{HA}_{\operatorname{Coh}_0(S)} \curvearrowright  \bigoplus_n {H}^*(\operatorname{Hilb}^n(S))\curvearrowleft  \operatorname{HA}_{\operatorname{Coh}_0(S)} 
    \]
    as in Proposition 6.6 of \cite{mmsv23}. Note that operators coming from the left action decrease $n$, whereas operators coming from the right action increase $n$. In particular, when the surface $S$ has trivial canonical bundle it is shown in \cite{mmsv23} that $\operatorname{HA}_{\operatorname{Coh}_0(S)}$ is isomorphic to a certain quantum deformation $W^+(S)$ of the BPS Lie algebra of $S$ \cite{dhm}. Moreover, the algebra $W^+(S)$ has a \textit{double} $W(S)$ with a triangular decomposition
$$
W(S) = W^+(S) \otimes W^0(S) \otimes W^-(S) , \ \ W^-(S) \cong (W^+(S))^\text{op} 
$$
which respects the \textit{CoHA} in its positive part. 
\end{example}

\noindent In the classical case where $\mathcal{A}$ is a hereditary and finitary category, the associated Hall algebra essentially encodes all the information regarding the structure of extensions in $\mathcal{A}$. Moreover, in this case the theory of Drinfeld doubles provides an algebraic solution to the problem discussed in this Section \cite{schiffmann2009lectureshallalgebras}. Even more, if $\mathcal{A}$ is the category of finite dimensional nilpotent representations of a quiver $Q$ over a field, then the double $D{HA}_\mathcal{A}$ recovers the quantum group associated with $Q$, and its natural triangular decomposition. In the two dimensional case, one can understand preprojective \textit{CoHA}'s as algebra of creating operators on the cohomology (and K--theory) of Nakajima quiver varieties \cite{MR3322196}. It has further been shown that such \textit{CoHA} can be identified with the positive half of the Maulik--Okounkov Yangian \cite{botta2025okounkovsconjecturebpslie, schiffmann2024cohomologicalhallalgebrasquivers}. In general, \textit{CoHA}'s are expected to provide a geometric way of constructing \textit{positive halves of quantum groups} and their representations. However we lack of a systematic way of understanding their doubles. Thus, one usually tries to construct left and right representations as in \eqref{leftright}, and glue them to the action of a bigger algebra. \newline

\noindent Let $X$ be a smooth complex projective curve, and let $\beta$ be a rational number. The category $\cohbetass$ of coherent semistable sheaves of fixed slope $\beta$ on $X$ has homological dimension $1$, but is not finitary. In this paper, we discuss a left and right action of the \textit{categorified Hall algebra} attached to the category $\cohbetass$ on the derived category of certain Quot spaces over the curve, as we now explain.

\subsubsection*{Quot spaces in tilted hearts and categorified action } 
\noindent \textbf{Algebra. }In Section \ref{sec : algebra} we recall the construction of the categorified Hall algebra (\textit{CatHA}) in our particular setting, following \cite{Porta2019TwodimensionalCH} and \cite{dps}.  In order to carry out the subsequent constructions, we freely use the language of derived stacks. We consider the standard derived enhancement $\cohbetassstack$ of the stack of semistable sheaves of slope $\beta$ on $X$. This stack fits in a derived convolution diagram
\begin{equation}\label{eqn : conv for ss in intro}
\begin{tikzcd}
	& {(\cohbetassstack)^{\operatorname{ext}}} & \\
	{\cohbetassstack \otimes \cohbetassstack} && \cohbetassstack
	\arrow["p", from=1-2, to=2-1]
	\arrow["q"', from=1-2, to=2-3]
\end{tikzcd}
\end{equation}
where (underived) points in the stack ${(\cohbetassstack)^{\operatorname{ext}}}$ correspond to extensions of semistable sheaves of slope $\beta$. As observed by Dyckeroff--Vasserot \cite{Dyckerhoff_2019} and Diaconescu--Porta--Sala \cite{dps}, such diagram has to be understood as the shadow of a more complicated object. More precisely, we recall in Subsection \ref{subsec : waldhausen} the notion of \textit{2--Segal space}, which is a simplicial stack encoding the higher combinatorics of the Hall product. Since the map $p$ is smooth and the map $q$ is proper we can define the composition 
\[
    q_*p^* \colon \operatorname{Coh}^{\operatorname{b}}(\cohbetassstack) \otimes \operatorname{Coh}^{\operatorname{b}}(\cohbetassstack) \longrightarrow \operatorname{Coh}^{\operatorname{b}}(\cohbetassstack)
\]
This endows the stable $\infty$--category $\operatorname{Coh}^{\operatorname{b}}(\cohbetassstack)$ with an $\mathbb{E}_1$--monoidal structure. We refer to the category $\operatorname{Coh}^{\operatorname{b}}(\cohbetassstack)$ together with this $\mathbb{E}_1$--monoidal structure as \textit{categorical Hall algebra of semistable sheaves of slope $\beta$ on $X$}. We can extract different \textit{CoHA}'s out of the CatHA. For example, the spaces
 \[ 
            G_0(\cohbetassstack) \hspace{1cm} \text{and} \hspace{1cm} H^{\text{BM}}_*(\cohbetassstack)
 \]
inherit the structure of unital associative algebras, recovering the construction of \cite{ss20, kapranov2022cohomologicalhallalgebrasurface, Zhao_2020} which agree with the formalism discussed in the previous Section. These algebra are the cohomological and K--theoretical Hall algebras associated with the category $\cohbetass$. \newline

\noindent\textbf{Module.} A natural way of considering {framed sheaves} is Grothendieck's Quot scheme, which parametrizes flat families of quotients of a fixed coherent sheaf $\framing$. The Hilbert scheme in Example \ref{ex : hilbS} is a particular case of Quot scheme, where $\framing = \mathcal{O}_S$. In order to set up an action diagram as in \eqref{action}, we study a generalization of such classical notion. In Section \ref{subsec : tilted heart abeta} we construct a family of hearts
\begin{equation}\label{tilted hearts}
    \abeta \subset D^b(\coh(X))
\end{equation}
by \textit{tilting} the standard t--structure with respect to a torsion pair. This is a standard procedure coming from the study of Bridgeland stabiliy conditions \cite{bridgeland2006stabilityconditionstriangulatedcategories} (see \cite{macri2019lecturesbridgelandstability} for a review). We denote as
    $$\operatorname{Quot}^\beta(X,\framing)$$ 
the punctual Quot space parametrizing quotients of an object $\framing$ in the heart $\abeta$. If $\framing =\linebundle[1]$, where $\linebundle$ is a line bundle on $X$ of degree smaller than $\beta$, then closed points in the scheme $\Quotbetaone$ can be described as short exact sequences
\begin{equation}\label{ses}
   0\rightarrow \linebundle \rightarrow F \rightarrow T \rightarrow 0
\end{equation}
of coherent sheaves where
\begin{itemize}
    \item the slope of every nonzero subsheaf of $F$ is smaller then or equal to $\beta$
    \item the slope of every nonzero quotient of $T$ is strictly bigger then $\beta$.
\end{itemize}
By using this explicit description, we relate the above Quot space with other relevant moduli spaces occurring in the literature.
\begin{enumerate}
    \item On the one hand the above description allows us to conclude that the Quot space $\Quotbetaone$ coincides with the classical truncation of the derived stack of \textit{$\beta$--stable $\linebundle$--pairs} $\pairslb$ in the sense of \cite{dps}, Definition III.4.10. See Proposition \ref{prop : PairsQuot}.

    \item On the other hand, we have \textit{semistable Bradlow pairs}, which arise from a geometric PDE on the curve $X$, called \textit{vortex equation} \cite{bradlow1, bradlow1993birationalequivalencesvortexmoduli}. In this context, the constraints on the short exact sequence \eqref{ses} come from the Kobayashi--Hitchin correspondence relating stability with the solvability of the aforementioned vortex equation. We prove in Proposition \ref{prop : BradlowQuot} that the above Quot space is an open subscheme in the (coarse) space of semistable Bradlow pairs. This investigation has overlappings with \cite{Rota_2021}, and \cite{10.4134/JKMS.J230331}.
\end{enumerate}

\noindent  The latter identification allows us to construct a categorical action of the Cat--Ha associated with the category $\cohbetass$ on $\operatorname{Coh}^{\operatorname{b}}(\Quotbetaone)$. We state our main result.

\begin{theorem}\label{theo 2}
     Let $\linebundle$ be a line bundle of slope smaller that $\beta$. Then the pro--$\infty$--category  \newline
\noindent$\operatorname{Coh}^{\operatorname{b}}(\Quotbetaone)$ carries the structure of a left and right categorical module over the CatHA of semistable sheaves of slope $\beta$. In particular 
      \[
            G_0(\Quotbetaone) \hspace{1cm} \text{and} \hspace{1cm} H_*^{BM}(\Quotbetaone)
       \]
       have the structure of a left and right module for $G_0(\cohbetassstack)$ and $H_*^{BM}(\cohbetassstack)$, respectively.
\end{theorem}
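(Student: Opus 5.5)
The plan is to follow \cite{dps}: realize both actions as relative $2$--Segal structures (Hecke patterns) over the Waldhausen construction of $\cohbetass$, and read off the module structures from the induced correspondences. Concretely, I first use Proposition \ref{prop : PairsQuot} to identify $\Quotbetaone$ with the classical truncation of the derived stack $\pairslb$ of $\beta$--stable $\linebundle$--pairs. This gives a derived enhancement on which convolution diagrams can be formed; since $\operatorname{Coh}^{\operatorname{b}}$ only sees the classical truncation up to nilpotent thickening, one still gets the pro--category structure.

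The key geometric input is the two Hecke properties, analogous to Example \ref{ex : hilbS}. For the \emph{left} action, consider extensions $0\to G\to F'\to F\to 0$ with $G\in\cohbetass$ and $(\linebundle\to F)$ a $\beta$--stable pair, equivalently a point of $\Quotbetaone$. Then $\linebundle\to F'$ via a lift is a stable pair iff $\linebundle \to F$ is. Dually, for the \emph{right} action, consider extensions $0\to F'\to F\to G\to 0$ of the underlying objects in the tilted heart $\abeta$, i.e.\ quotients $\linebundle[1]\twoheadrightarrow E\twoheadrightarrow E'$ with kernel $G[1]$, or the sheaf sequence with $G$ semistable. In both cases I check the two bullet conditions describing closed points of $\Quotbetaone$:
\begin{enumerate}
\item every subsheaf of the new $F$ has slope $\le\beta$, which holds because $\cohbetass$ is closed under extensions and the subsheaves of slope $\le\beta$ form the torsion--free part $\fbeta$ of a torsion pair, hence are extension--closed;
\item every quotient of $T$ has slope $>\beta$, which holds because $T$ is unchanged or is modified by a sheaf in the appropriate class, using $\operatorname{Hom}(\tbeta,\fbeta)=0$.
\end{enumerate}
Thus $\Quotbetaone$ is preserved under extension by $\cohbetass$ on both sides. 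This gives the correspondences
\[ \cohbetassstack\times \pairslb \xleftarrow{p} \mathcal{E} \xrightarrow{q} \pairslb \]
and their right--hand analogues.

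Next I promote these to simplicial data. I define the relative Waldhausen constructions, namely flags of pairs with successive quotients in $\cohbetass$, as simplicial derived stacks over $\waldhausen(\cohbetass)$. Then I verify the relative $2$--Segal condition: the squares relating $\mathcal{E}$ composed twice to the flag stack $\mathcal{E}_2$ are homotopy cartesian. I would deduce this from the corresponding statement for the ambient pair/perfect--complex flag stacks (as in \cite{dps}), together with the openness and Hecke property above; fibre products of open substacks remain cartesian. This yields associativity of $q_*p^*$ via base change.

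For the functor $q_*p^*$ to exist, I need $p$ quasi--smooth (or derived lci) and $q$ proper (representable). Properness of $q$ follows because its fibres are Quot schemes of $F$ or $F'$ in $\abeta$, which are proper. Quasi--smoothness of $p$ follows from hereditarity on a curve: the fibres are $\operatorname{Ext}$--stacks with tor--amplitude in $[-1,1]$. I expect the main obstacle to be this step for the right action, where the extension happens in the tilted heart $\abeta$ rather than in $\coh(X)$. There I must control $\operatorname{Ext}^{2}$ between objects of $\abeta$; I would argue that $\operatorname{Hom}^{\ge 2}$ vanishes by Serre duality on the curve and the slope conditions. Decategorifying via $G_0$ and Borel--Moore homology, with proper pushforward and quasi--smooth pullback, then gives the module statements.
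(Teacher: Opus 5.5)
Your architecture is the paper's: identify $\Quotbetaone$ with stable pairs via Proposition~\ref{prop : PairsQuot}, use Hecke patterns, get relative $2$--Segal spaces as in \cite{dps}, find one proper and one derived lci leg, then decategorify. Your type-raising action is the paper's right action and your type-lowering action is its left action. However, two load-bearing steps fail as written.

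\textbf{The Hecke property is false as you state it.} For the type-raising action you also check it in the direction that fails. Take $\beta=0$, $\linebundle=\mathcal{O}_X(-p)$, $F=\mathcal{O}_X$, $T=\mathcal{O}_p$ and $G=\mathcal{O}_X$. Use the split extension $F'=F\oplus G$ with lift $(s,0)$. Its cokernel $\mathcal{O}_p\oplus\mathcal{O}_X$ has the slope-$0$ quotient $\mathcal{O}_X$, so it is not in $\tbeta$, and $\linebundle\to F'$ is not $\beta$--stable. Thus $\tbeta$ is not closed under extension by $\cohbetass$, and $\Quotbetaone$ is not preserved ``on both sides''.

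What holds (Propositions~\ref{prop : leftthecke} and~\ref{prop : righthecke}) is one-directional. Stability of the pair receiving the push-forward forces stability of the other pair, under a heart condition ($T\in\abeta$, resp.\ $T\in\coh(X)$). For the type-raising action, this direction needs $F'/G\in\fbeta$ whenever $F'\in\fbeta$ and $\mu(G)=\beta$ exactly. That is a slope argument, not extension-closedness.

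This has three consequences. The correspondence must impose stability at both ends. Then $p$ is only an open substack of the base change of the extension stack, which is still derived lci. And the relative $2$--Segal property must be deduced from the one-directional statement, as in Lemmas~\ref{lemma : step 1} and~\ref{lemma : step 2}.

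\textbf{Properness of $q$ is only asserted, with the wrong fibres for one action.} The paper identifies properness as the main technical content of the proof. For the type-lowering action, the fibre of $q$ over $[\linebundle\to F'\to T']$ parametrizes lifts of $\linebundle\to F'$ through surjections $F\twoheadrightarrow F'$ with kernel in $\cohbetass$. Equivalently, it parametrizes subobjects $T\subset T'$ in $\abeta$ with $T'/T\in\cohbetass[1]$. This is a locus in $\operatorname{Quot}^\beta(X,T')$, a Quot space of the kernel $T'$, not of $F$ or $F'$.

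Proving this fibre proper needs two things:
\begin{enumerate}
\item $\beta\in\BQ$, so that $\abeta$ is noetherian and the Quot space is proper;
\item a proof that the locus is closed, which is Lemma~\ref{lemma : aux rpas}.
\end{enumerate}
Alternatively for the second point: $\deg-\beta\operatorname{rk}\geq 0$ on $\abeta$, with equality exactly on $\cohbetass[1]$. So the locus is a union of connected components.

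For the type-raising action, the fibre is a union of connected components of the ordinary Quot scheme of $F'$, namely slope-$\beta$ subsheaves (Lemma~\ref{lemma : locally rpas in right action}).

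\textbf{The lci step is not the obstacle you flag.} The vanishing you propose for it is false. For $g\geq 2$, $\operatorname{Hom}^2(F[1],T)=\operatorname{Ext}^1(F,T)\cong\operatorname{Hom}(T,F\otimes\omega_X)^\vee$ can be nonzero with $F\in\fbeta$ and $T\in\tbeta$. For example, take $\beta=0$, $F=\mathcal{O}_X$ and $T=\mathcal{O}_X(q)$. Derived lci only needs $\operatorname{H}^2(\mathbb{T}_x)=0$. As in the paper's tangent complex computation, this follows from the vanishing of $\operatorname{Ext}^{\geq 2}$ between coherent sheaves on a curve.

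\textbf{A smaller point.} $\coh^{\operatorname{b}}$ does see derived structure; only $G_0$ is insensitive to it. Passing to $\coh^{\operatorname{b}}(\Quotbetaone)$ is harmless here for a different reason. The tangent complex $\mathbb{R}\operatorname{Hom}(T,F)[1]$ of $\pairslb$ is concentrated in degree $0$, so $\pairslb$ is already classical.
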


\noindent Our strategy to prove Theorem \ref{theo 2} is reminiscent of Example \ref{ex : hilbS}. We define in Section \ref{subsec : Construction of the left action} a space $\segal_1^l\Quotbetaone$ of extensions of quotients \eqref{ses} in the sense of Definition \ref{def : left extension}. This yields an action diagram 
\[
    \begin{tikzcd}
	& {\segal_1^l\Quotbetaone} & \\
	{\cohbetassstack \times \Quotbetaone} && \Quotbetaone 
	\arrow["{u^l_1 \times \omega_1}", from=1-2, to=2-1]
	\arrow["{\omega_0}"', from=1-2, to=2-3]
\end{tikzcd}
\]
and the left action is given by the pull--push operation
\begin{equation}\label{intro left action}
    (\omega_0)_*(u^l_1 \times \omega_1)^* \colon \operatorname{Coh^b}(\cohbetassstack) \otimes \coh^{\operatorname{b}}(\Quotbetaone) \longrightarrow \coh^{\operatorname{b}}(\Quotbetaone) 
\end{equation}
The analogous construction for the right action is presented in Section \ref{subsec : construction of the right action}. In order for the function\eqref{intro left action} to be well--defined we need to prove that the morphisms $\omega_0$ is proper and $u^l_1 \times \omega_1$ is derived lci. The two main technical points in the proof of Theorem \ref{theo 2} are Lemma \ref{lemma : aux rpas} and Lemma \ref{lemma : locally rpas in right action}, where we prove that the morphism $\omega_0$ and its right counterpart are proper. The proofs of such Lemmas rely in a substantial way on the properties of the tilted hearts \eqref{tilted hearts}, as mentioned in Remark \ref{rmk : coh ss and topology} and in the final point \ref{itm : coh ss topo prop} of Section \ref{subsec : construction of the right action}.

%\noindent As we mentioned above, the key geometric property behind the proof of Theorem \ref{theo 2} is that of a 2--sided Hecke pattern. In the context of this paper, we have to rely on the general concept of \textit{Segal Hecke patterns} in the sense of Definition II.3.9 of \cite{dps}. More precisely, the higher combinatorial structure of the left and right actions is encoded in the \textit{relative 2--Segal spaces} \eqref{eqn : relative simp pairs} and \eqref{eqn : relative simplicial copairs}, respectively. 

\subsection{Relation to other work}
Diaconescu--Porta--Sala constructed an action of the categorical Hall algebra associated with the category of zero dimensional sheaves on a surface $S$ on the derived category of the stack of Pandharipande--Thomas stable pairs on $S$ \cite{Pandharipande_2009} --- see Corollary 4.40 in \cite{dps}. In fact, it is shown in Proposition III.4.11 of \cite{dps} that Pandharipande--Thomas stable pairs can be described as stable pairs in the sense of Definition III.4.10 in \cite{dps}. On the other hand, Bridgeland \cite{bridgeland2020hallalgebrascurvecountinginvariants} related Pandharipande--Thomas stable pairs with Quot spaces in a tilted heart to derive wall--crossing formulas for Donaldson--Thomas invariants and Pandharipande--Thomas invariants. In the case of curves, Bradlow pairs have been related to Quot schemes in a tilted heart by different authors \cite{Rota_2021} \cite{10.4134/JKMS.J230331}. The main novelty in the present paper are the further connections between Quot spaces in tilted hearts, Bradlow pairs, and stable pairs on a curve. In turn, these connections are essential to the proof of Theorem \ref{theo 2}.

\subsection{Acknowledgements}
First, I want to thank my advisor Andrei Negut for his continued support and effective advice. I wish to thank Francesco Sala for sharing valuable insights about tilting and stable pairs. I thank Mauro Porta for his feedback and discussions. I further thank Ana Pavlakovic for introducing me to the notion of Bradlow pair. Moreover, I thank Tommaso Botta, Ben Davison, Dragos Fratila, Shivang Jidal, Archi Kaushik, Aliaksandra Novik, Olivier Schiffmann and 
Sebastian Schlegel Mejia for many interesting discussions and suggestions.

\subsection{Notations and conventions}
We make a list of notations and conventions that are used in this paper. We refer to \cite{lurie2008highertopostheory} for background material. We only work over the field $\BC$ of complex numbers.

\begin{itemize}
\item We denote as $\mathcal{S}$ the $\infty$--category of spaces
\item We denote as $\operatorname{Cat_{\infty}}$ the $\infty$--category of $\infty$--categories.
\item We denote as $\operatorname{Cat^{st}_{\infty}}$ the $\infty$--category of stable $\infty$--categories with exact functors between them.
\item Let $\mathcal{C}\in\operatorname{Cat_{\infty}}$. We denote as $\mathcal{C}^{\simeq}$ the maximal infinity groupoid contained in $\mathcal{C}$.
\item We denote as $\operatorname{dAff}$ the $\infty$--category of derived affine schemes.
\item We denote as $\operatorname{dSt}$ the $\infty$--category of derived stacks. This is the hypercompletion of the $\infty$ topos of sheaves on the site $\operatorname{dAff}$ with the Grothendieck topology induced by the étale  topology. 
\item We denote as $\operatorname{dGeom}$ the $\infty$--category of derived geometric stacks as in \cite{porta2016higheranalyticstacksgaga}.
\item Let $\operatorname{X} \in \operatorname{dSt}$. We denote as $\operatorname{Perf(X)}$ the $\infty$--category of perfect complexes on $\operatorname{X}$.
\end{itemize}

\section{Algebra}\label{sec : algebra}
Let $X$ be a smooth complex projective curve, and fix a number $\beta \in \BQ \sqcup\{+\infty\}$. In this Section we recall the construction of the Categorical Hall Algebras associated with the categories $\coh(X)$ of coherent sheaves and $\coh_{\beta}^{ss}(X)$ of semistable sheaves of slope $\beta$ on $X$, as defined by Diaconescu--Porta--Sala -- see Sections II.1.4 and II.1.5 in \cite{dps}.

\subsection{Introduction to the Section}\label{subsec : intro sec 2}

We recall the definition of the Cohomological Hall Algebra associated with the category $\cohbetass$. The stack $\underline{\coh}(X)$ parametrizes flat families of coherent sheaves on $X$, and it is a smooth algebraic stack locally of finite type over $\operatorname{Spec}(\BC)$. As we mentioned in the introduction, the \textit{CoHA} multiplication is given by the convolution diagram
\begin{equation}\label{eqn : coha diagram}
\begin{tikzcd}
	& {(\underline{\coh}(X))^{\operatorname{ext}}} & \\
	{\underline{\coh}(X) \otimes \underline{\coh}(X)} && \underline{\coh}(X)
	\arrow["p", from=1-2, to=2-1]
	\arrow["q"', from=1-2, to=2-3]
\end{tikzcd}
\end{equation}
where the stack ${(\underline{\coh}(X))^{\operatorname{ext}}}$ parametrizes extensions $0\rightarrow F' \rightarrow F \rightarrow F'' \rightarrow 0$ of coherent sheaves, and
\[
    p \colon 0\rightarrow F' \rightarrow F \rightarrow F'' \rightarrow 0 \mapsto (F',F'') \quad q \colon 0\rightarrow F' \rightarrow F \rightarrow F'' \rightarrow 0 \mapsto F
\]
Since the category $\coh(X)$ has homological dimension $1$, the map $p$ is smooth. It is also easy to see that the map $q$ is proper. In order to define a convolution product as in \eqref{product}, the authors of \cite{dps} consider the Borel--Moore homology --- or, more generally, any motivic Borel--Moore homology theory as developped by A. Khan \cite{khan1, khan2019virtualfundamentalclassesderived}  --- of the above correspondence. We are interested in the subcategory $\cohbetass  \subset \coh(X)$ of semistable sheaves of fixed slope $\beta$, for a fixed $\beta$. This category is closed under extensions. Moreover if $F\in \cohbetass$, then every subsheaf $F' \subset F$ of slope $\beta$ is semistable. Similarly, any quotient $F \twoheadrightarrow F''$ of slope $\beta$ is semistable. It follows that the above construction extends \textit{verbatim} to the open substack $\underline{\coh}_{\beta}^{ss}(X) \hookrightarrow \underline{\coh}(X)$
thus delivering the \textit{CoHA} of semistable sheaves of fixed slope. Porta--Sala \cite{Porta2019TwodimensionalCH} provided a categorification of this construction. That is, an $\mathbb{E}_1$--monoidal structure on the bounded derived category of the stack $\underline{\coh}(X)$ which recovers the above \textit{CoHA} under decategorification. In order to get such a structure, we need a suitable enhancement of the convolution diagram \eqref{convdiag}, which is provided by the \textit{Waldhausen construction} in Section \ref{subsec : waldhausen}. We explain the categorification to the case of semistable sheaves in Subsection \ref{subsec : Categorical Hall algebra of semistable sheaves of fixed slope}.

\subsection{Derived stacks of objects in hearts of t--structures }\label{subsec : Derived stacks of objects in hearts of t--structures }
    In order to carry out the constructions in the following Sections, we introduce derived and more general versions of the classical stacks mentioned in the previous Subsection. Let $\perfstack$ be the \textit{derived moduli stack of perfect complexes} on $X$. This is defined as the functor
    \begin{equation}\label{eqn : perf functor}
\perfstack\ \colon \operatorname{dAff}^{\operatorname{op}} \longrightarrow \mathcal{S}
    \end{equation}    
    which sends a derived affine scheme $S$ to the maximal $\infty$--grupoid $\operatorname{Perf}(X\times S)^{\simeq}$ contained in the stable $\infty$--category $\operatorname{Perf}(X\times S)$ of perfect complexes of quasi--coherent sheaves on $X\times S$ which have proper support over $S$. See Construction 2.5 in \cite{Porta2019TwodimensionalCH}, and example II.2.21 in \cite{dps}. This is the derived version of the stack of perfect complexes on $X$. \newline

    \noindent\textbf{Flatness and openness of t--structures. }
    In order to define the derived stack $\cohflat(X)$ we need to introduce the concept of flatness. We discuss a general notion of flatness with respect to a t--structure, as this will be useful in the following Sections. In the following we denote as $X_s$ the fiber of a morphism $X \times S \rightarrow S$.
    \begin{definition}
        Let $S$ be an affine derived scheme. A \textit{family of t--structures on }$X\times S$ is a collection $\{\tau_s\}_{s\in S}$, where $\tau_s$ is a t--structure on the fiber $X_{s}$.
    \end{definition}

    \noindent As an example, a t--structure on $\perf$ induces a \textit{constant} t--structure $\underline{\tau}$ on $X\times S$.

    \begin{definition}
        Let $\{\tau_s\}_{s\in S}$ be a family of t--structures on $D(X\times S)$. Let $s\in S$ be the image of a point $t\in T$ under a morphism $T \rightarrow S$ in $\operatorname{dAff}_{\BC}$. denote by $(\mathcal{A}_{qc})_t$ the heart of the t--structure induced on $\operatorname{Perf}(X_t)$ by base change. We say that $E\in \operatorname{Perf}(X_T)$ is \textit{flat with respect to} $\{\tau_s\}_{s\in S}$ if $E_t\in (\mathcal{A}_{qc})_t$ for all $t\in T$.
    \end{definition}

    \noindent Let $\tau$ be a t--structure on $\perf$. Then we have a subfunctor\footnote{This is analogous to Construction 2.5 in \cite{Porta2019TwodimensionalCH}.} of \eqref{eqn : perf functor}
    \begin{equation}\label{eqn : flatstack def}
        \cohflat(X,\tau) \longrightarrow \perfstack
    \end{equation}
   obtained by sending $S\in \operatorname{dAff}$ to the full subspace $\coh_S(X\times S)^{\simeq}$ spanned by families of coherent sheaves which are flat with respect to the constant t--structure $\underline{\tau}$. If we consider the standard t--structure $\tau_{\operatorname{std}}$, flatness reduces to the usual notion and we get a derived stack $\cohflat(X)$ of coherent sheaves on $X$ as in \cite{Porta2019TwodimensionalCH}. More precisely, the stack $\cohflat(X,\tau)$ is just the standard derived enhancement of the classical stack of families of $\tau$--flat sheaves defined in \cite{Bayer_2021}.
    
    \begin{proposition}
        The classical truncation $\prescript{cl}{}{\cohflat(X)}$ coincides with the usual stack $\underline{\coh}(X)$ of coherent sheaves on $X$.
    \end{proposition}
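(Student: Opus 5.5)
The plan is to compare the two functors directly on classical (underived) affine schemes. The classical truncation of a derived stack $\mathbf{F}$ is its restriction $\prescript{cl}{}{\mathbf{F}} = \mathbf{F}|_{\operatorname{Aff}}$ to ordinary affine schemes, since the inclusion $\operatorname{St}\hookrightarrow\operatorname{dSt}$ is left Kan extension along $\operatorname{Aff}\hookrightarrow\operatorname{dAff}$. So it suffices to show that for every classical affine $S=\operatorname{Spec}A$ the space $\cohflat(X)(S)$ is naturally equivalent to the groupoid $\underline{\coh}(X)(S)$ of $S$-flat coherent sheaves on $X\times S$ with proper support over $S$. The equivalence must be compatible with pullbacks along maps $S'\to S$, so that it assembles into an equivalence of stacks.

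First, fix classical $S$ and $E\in\operatorname{Perf}(X\times S)$ with proper support over $S$ such that $E_s\in\coh(X_s)$ (the standard heart) for every point $s\in S$. The key step is to show that $E$ is then a discrete sheaf, concentrated in degree $0$, and that $H^0(E)$ is flat over $S$. This is the standard fiberwise criterion, which I would prove as follows.
\begin{itemize}
\item Work locally, and take the largest $n$ with $H^n(E)\neq 0$. By right exactness of derived base change, $H^n(E_s)=H^n(E)\otimes k(s)$. By Nakayama, this is nonzero at some $s$, which forces $n\le 0$.
\item To get $n=0$, use the bottom degree. Take the lowest $m$ with $H^m(E)\ne0$ and suppose $m<0$. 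Because $E$ has tor-amplitude bounded (it is perfect), one can use the local criterion of flatness in its derived form: a perfect complex whose derived fibers at all points lie in degree $0$ has tor-amplitude in $[0,0]$. This is [Lurie, SAG, or Stacks Project Tag 0BDJ-type statement].
\item Tor-amplitude $[0,0]$ over $A$ (relative to $S$) means $E\simeq H^0(E)[0]$ with $H^0(E)$ flat over $S$.
\end{itemize}
Conversely, an $S$-flat coherent sheaf on $X\times S$ with proper support is perfect, because $X$ is smooth and the sheaf is flat over $S$. Its derived fibers are the ordinary fibers, which lie in the heart.

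Next, I check the mapping spaces. For such discrete $E,E'$, the space of equivalences in $\operatorname{Perf}(X\times S)^{\simeq}$ between them is discrete and equals the set of isomorphisms of sheaves, since $\operatorname{Ext}^{<0}$ between sheaves vanishes. Hence $\coh_S(X\times S)^{\simeq}$ is a $1$-groupoid equivalent to the groupoid of flat families. Naturality in $S$ holds because derived pullback of an $S$-flat sheaf agrees with the underived pullback.

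The main obstacle is the derived local flatness criterion in the first step: concluding from degree-$0$ fibers that $E$ itself is flat over a possibly non-Noetherian classical base. I would reduce to the Noetherian case first, since $\perfstack$ is locally of finite presentation and $S$ can be approximated by finite type schemes. Then I would apply the Noetherian criterion, which is an induction via Nakayama on the lowest nonvanishing cohomology.
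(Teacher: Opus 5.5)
Your proposal is correct and takes the same route as the paper: the paper evaluates on an underived affine $S$ and simply asserts that the $S$-points are $S$-flat coherent sheaves on $X\times S$ with proper support, and you supply the justification it omits (the fibrewise derived flatness criterion, discreteness of the mapping spaces, and compatibility with base change). The one step to tighten is the non-Noetherian reduction: the hypothesis that every fibre over a point of $S$ lies in the heart need not hold at every point of a finite-type model $S_0$, so you must first shrink $S_0$ to the locus where the fibres lie in the heart, which is open by openness of flatness and through which $S\to S_0$ factors, and only then apply the Noetherian criterion.
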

    \begin{proof}
        Let $S$ be an underived affine scheme. Then, the $S$--points in $\cohflat(X)$ are coherent sheaves on $X\times S$, flat and properly supported over $S$.
    \end{proof}

    \noindent We have an important definition.

    \begin{definition}
        A fiberwise family of t--structures $\underline{\tau}$ on $D(X \times S)$ \textit{universally satisfies openness of flatness} if for every derived affine scheme $S$ with a morphism $T\rightarrow S$, and every $T$--perfect object $E\in D(X_T)$, the set
        \[
            \{E\in D(X_T)\ \big| \ E_t \in (\mathcal{A}_{qc})_t)        
       \]
       is open.
    \end{definition}

    \noindent The following is going to be useful in many parts of the present paper. See Proposition II.2.56 in \cite{dps} for a non commutative (more general) version.
    
    \begin{proposition}\label{prop : geom locally finite presentation}
        The t--structure $\tau$ universally satisfies openness of flatness if and only if the morphism \eqref{eqn : flatstack def} is representable by Zariski open embeddings. In particular, if $\tau$ universally satisfies openness of flatness, the stack $\cohflat(X,\tau)$ is a geometric derived stack locally of finite presentation. 
    \end{proposition}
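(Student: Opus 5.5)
The proof will show that the inclusion $\cohflat(X,\tau) \to \perfstack$ is a monomorphism whose base changes along affine points are open immersions. The claim that $\cohflat(X,\tau)$ is geometric and locally of finite presentation will then follow from the known fact that $\perfstack$ is such a stack (To\"en--Vaqui\'e, as recalled in Construction 2.5 of \cite{Porta2019TwodimensionalCH}).

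\emph{Monomorphism.} By definition, the subfunctor \eqref{eqn : flatstack def} sends $S$ to a full subspace of $\operatorname{Perf}(X\times S)^{\simeq}$, namely a union of connected components. Hence it is $(-1)$-truncated.

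\emph{Base change.} Fix a derived affine $T$ and a $T$-point $E \in \operatorname{Perf}(X_T)$, that is, a map $T \to \perfstack$. The fiber product $T \times_{\perfstack} \cohflat(X,\tau)$ is the subfunctor of $T$ whose value on $T' \to T$ is nonempty exactly when $E_{t'} \in (\mathcal{A}_{qc})_{t'}$ for all $t' \in T'$. Flatness is a pointwise condition and base change preserves fibers: the fiber at $t'$ is the fiber of $E$ at the image of $t'$. So this subfunctor is represented by the open locus
\[
U_E=\{t\in T \mid E_t \in (\mathcal{A}_{qc})_t\} \subset T,
\]
provided this locus is Zariski open. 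It is then given the induced open derived subscheme structure, since open subsets of $|T|=|t_0 T|$ correspond uniquely to open derived subschemes.

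\emph{Equivalence with openness.} Openness of $U_E$ for every $T$ and every $E$ is exactly what universal openness of flatness asserts, which proves ``$\Rightarrow$''. Conversely, if every such base change is an open embedding, then every $U_E$ is open, which gives ``$\Leftarrow$''.

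\emph{Geometricity.} Since open embeddings are smooth, representable, and locally of finite presentation, an atlas of $\perfstack$ restricts to an atlas of $\cohflat(X,\tau)$. The cotangent complex is also unchanged under open immersions. Hence $\cohflat(X,\tau)$ is geometric and locally of finite presentation.

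The main obstacle is the identification of the fiber product with $U_E$. One has to check carefully that the flatness condition, which is defined via base change of the t-structure to fibers $X_t$, depends only on the underlying point and is stable under pullback along $T' \to T$. This requires that the family of t-structures be compatible with base change of fibers, which holds by the construction of the constant family $\underline{\tau}$. One must also check that it suffices to test on classical points, because $|T|$ only sees the truncation of $T$.
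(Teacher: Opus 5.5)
The paper gives no proof of its own here and defers to the noncommutative version in Proposition II.2.56 of \cite{dps}. Your argument is correct and is essentially the standard one behind that result: $\cohflat(X,\tau)\to\perfstack$ is a monomorphism whose base change along $E\colon T\to\perfstack$ is the subfunctor of $T$ cut out by the flatness locus $U_E$.

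The only nontrivial input for the equivalence is, as you flag, that membership in the base-changed heart is insensitive to field extensions $k(t)\subset k(t')$. This is a standard property of base change of t--structures (cf.\ \cite{Bayer_2021}).

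One small precision in the last step. To\"en--Vaqui\'e only show that $\perfstack$ is \emph{locally} geometric, so an open immersion into it gives only local geometricity. To get a geometric stack in the strict sense, you should add that $\cohflat(X,\tau)$ lands in an open substack of bounded amplitude. This is immediate in the cases used later, $\coh(X)$ and $\abeta$, whose objects have cohomology only in degrees $-1$ and $0$.
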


\subsection{Waldhausen construction}\label{subsec : waldhausen}
    We recall the construction of the categoricall Hall algebra associated with the stack $\cohflat(X)$ from \cite{Porta2019TwodimensionalCH}, building on \cite{Dyckerhoff_2019} and \cite{gaitsgory2017study}. We define the simplicial derived stacks encoding associativity of the Hall product. Let 
    \[
        \operatorname{T} := \operatorname{Hom}_{\Delta}([1] ,-) \colon \Delta \longrightarrow \operatorname{Cat}_{\infty}
    \]
    where $\Delta$ is the simplicial category. We denote $\operatorname{T}_n = \operatorname{T}([n])$.
    
\begin{definition}
    Let $\mathcal{C}$ be a $\BC$--linear stable $\infty$--category. We define $\mathcal{S}_n\mathcal{C}$ to be the full subcategory of $\operatorname{Fun}(\operatorname{T}_n,\mathcal{C})$ spanned by those functors $F$ satisfying the following assumptions.
    \begin{enumerate}
        \item $F(i,i) \simeq 0$ for every $0\leq i \leq n$.
        \item For every $0\leq i,j \leq n-1, \ i\leq j-1$, the square
        \[\begin{tikzcd}
	{F(i,j)} & {F(i+1,j)} \\
	{F(i,j+1)} & {F(i+1,j+1)}
	\arrow[from=1-1, to=1-2]
	\arrow[from=1-2, to=2-2]
	\arrow[from=1-1, to=2-1]
	\arrow[from=2-1, to=2-2]
\end{tikzcd}\]
    is a pullback in $\mathcal{C}$.
    \end{enumerate}
    \end{definition}
 \noindent It is then straightforward to check that the above construction induces a simplicial object
   \begin{equation}\label{eqn : waldhausen C}
        \waldhausen\mathcal{C} \colon \Delta^{op} \longrightarrow \operatorname{Cat}_{\infty}
   \end{equation}
   Theorem 7.3.3 in \cite{Dyckerhoff_2019} states that \eqref{eqn : waldhausen C} is a \textit{2--Segal space}, in the sense of Definition 2.3.1 therein. The importance of this notion in the context of the present paper is expressed by the following Theorem. We denote by $\operatorname{Corr}^{\times}(\operatorname{dSt})$ the $(\infty,2)$ category of correspondences of derived stacks, equipped with the symmetric monoidal structure induced by the cartesian product on $\operatorname{dSt}$ --- see \cite{gaitsgory2017study} $\S 7.2.1$.

    \begin{theorem}[Theorem 1.1 in \cite{godicke2024infty}]\label{thm : segal monoid}
        There is an equivalence of $\infty$--categories
        \[
            \operatorname{2--Segal}(\operatorname{dSt}) \longrightarrow \operatorname{Mon}_{\mathbb{E}_1}(\operatorname{Corr}^{\times}(\operatorname{dSt})), 
        \]
       where the left hand side is the category of 2--Segal derived stacks, and the right hand side is the category of $\mathbb{E}_1$--monoidal objects in the category $\operatorname{Corr}^{\times}(\operatorname{dSt})$.
    \end{theorem}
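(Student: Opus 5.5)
The plan is to pass through the universal property of spans, reducing both sides of the equivalence to functors out of some indexing $\infty$--category into $\operatorname{dSt}$ satisfying limit conditions, and then to compare the two indexing categories directly. Since this theorem is imported from \cite{godicke2024infty}, I only sketch how I would argue it; I have not checked the coherence step below in detail.

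\textbf{Step 1: unwind the right-hand side.} First I rewrite $\operatorname{Mon}_{\mathbb{E}_1}(\operatorname{Corr}^{\times}(\operatorname{dSt}))$ in terms of functors into $\operatorname{dSt}$. An $\mathbb{E}_1$--monoid is a map of $\infty$--operads $\operatorname{Assoc}^{\otimes} \to \operatorname{Corr}^{\times}(\operatorname{dSt})^{\otimes}$. I would use the standard universal property of spans (Barwick, Haugseng, Gaitsgory--Rozenblyum \cite{gaitsgory2017study}): a functor $\mathcal{I} \to \operatorname{Corr}(\operatorname{dSt})$ is the same as a functor $\operatorname{Tw}(\mathcal{I}) \to \operatorname{dSt}$ that sends the distinguished squares of the twisted arrow category to pullbacks. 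Because the monoidal structure is cartesian, the operadic Segal conditions translate into product decompositions. This rewrites the right-hand side as a full subcategory of $\operatorname{Fun}(\operatorname{Tw}(\operatorname{Assoc}^{\otimes}),\operatorname{dSt})$ cut out by two families of limit conditions:
\begin{itemize}
\item cartesian squares, coming from composition of spans;
\item product decompositions, coming from the monoidal structure.
\end{itemize}

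\textbf{Step 2: compare indexing categories.} Using the Segal model $\Delta^{op} \to \operatorname{Assoc}^{\otimes}$, I would build a functor $j \colon \Delta^{op} \to \operatorname{Tw}(\operatorname{Assoc}^{\otimes})$. On objects, $[n]$ is sent to the active arrow $\langle n\rangle \to \langle 1\rangle$, which records the $n$--fold product. Restriction along $j$ takes a monoid $M$ to a simplicial stack $X$ with $X_1 = M$ and $X_2$ the multiplication correspondence $X_1 \times X_1 \leftarrow X_2 \to X_1$. Higher $X_n$ are the iterated compositions, and $X_0$ is the unit. The key identification is that, under $j$, the pullback conditions from Step 1 are exactly the 2--Segal conditions. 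Concretely, composing the spans $X_1^{\times 3} \leftarrow X_2 \times X_1 \to X_1^{\times 2} \leftarrow X_2 \to X_1$ gives the fibre product $X_2 \times_{X_1} X_2$. Associativity identifies this with $X_3$ for either triangulation of the square, which is precisely the 2--Segal condition $X_3 \simeq X_2 \times_{X_1} X_2$. In general, polygonal subdivisions of $[n]$ correspond to the ways of bracketing an $n$--fold composite.

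\textbf{Step 3: show restriction along $j$ is an equivalence (main obstacle).} I would show that every admissible functor on $\operatorname{Tw}(\operatorname{Assoc}^{\otimes})$ is right Kan extended from $\Delta^{op}$, and conversely that the right Kan extension of a 2--Segal object satisfies the conditions of Step 1. The first thing I would try is a cofinality computation: the comma categories of $j$ over a twisted arrow should reduce to posets of polygonal subdivisions. The limit over such a poset is computed by any single triangulation exactly when the object is 2--Segal, which is the mechanism in Dyckerhoff--Kapranov's treatment of 2--Segal spaces \cite{Dyckerhoff_2019}. Units also need attention. The degenerate maps $X_0 \leftarrow X_1$ and $X_0 \to X_1$ should be matched with the unit span, and the lower 2--Segal conditions are what make it a two-sided unit.

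Once Steps 1--3 are in place, fully faithfulness and essential surjectivity of the restriction functor both follow from the Kan extension statement.
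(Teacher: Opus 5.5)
The paper gives no proof of this statement. It imports it from \cite{godicke2024infty}, and only uses the object-level consequence: a 2--Segal stack determines an $\mathbb{E}_1$--monoid in correspondences, as in the proof of Theorem \ref{thm : cathabeta}. Judged on its own, your sketch has a genuine gap in Step 1. The universal property of spans identifies the \emph{space} of functors $\mathcal{I}\to\operatorname{Corr}(\operatorname{dSt})$ with the space of cartesian functors $\operatorname{Tw}(\mathcal{I})\to\operatorname{dSt}$. It does not identify functor $\infty$--categories. Already for $\mathcal{I}=*$ the two sides are $\operatorname{Corr}(\operatorname{dSt})$ and $\operatorname{dSt}$, whose morphisms are correspondences and maps respectively. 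So $\operatorname{Mon}_{\mathbb{E}_1}(\operatorname{Corr}^{\times}(\operatorname{dSt}))$ is not a full subcategory of $\operatorname{Fun}(\operatorname{Tw}(\operatorname{Assoc}^{\otimes}),\operatorname{dSt})$. Even a complete Kan extension argument in Step 3 would therefore only identify maximal $\infty$--groupoids, and full faithfulness does not follow.

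A better Step 3 cannot repair this. A map of simplicial stacks $f\colon X\to Y$ gives the correspondence $X_1\xleftarrow{=}X_1\xrightarrow{f_1}Y_1$. Compatibility with the multiplications would require $X_2\simeq (X_1\times X_1)\times_{Y_1\times Y_1}Y_2$, which fails in general. You only get a non-invertible $2$--cell, i.e.\ a \emph{lax} monoid map.

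A concrete counterexample: the constant simplicial stack at a point is terminal among 2--Segal stacks, and $\operatorname{Map}(\mathrm{const}_{*},Y)\simeq\operatorname{Map}_{\operatorname{dSt}}(\operatorname{Spec}(\BC),Y_0)$. Its monoid $\operatorname{Spec}(\BC)$ is the unit, hence initial in $\operatorname{Mon}_{\mathbb{E}_1}$. Taking $Y$ constant on $\operatorname{Spec}(\BC)\sqcup\operatorname{Spec}(\BC)$ shows that no fully faithful functor can send $X$ to the monoid $X_1$. With the default notions of morphism (all simplicial maps on one side, strong monoid maps on the other), the equivalence cannot hold as literally quoted. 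An honest $\infty$--categorical statement needs lax morphisms in the $(\infty,2)$--category of correspondences, or a restricted class of morphisms. That is what the $(\infty,2)$--categorical framework of the cited reference is for, and your sketch never addresses it.

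Two smaller points:
\begin{enumerate}
\item The monoidal structure on $\operatorname{Corr}(\operatorname{dSt})$ induced by $\times$ is \emph{not} cartesian: $\times$ is not the categorical product there. The product decompositions must be imposed through the cartesian structure on $\operatorname{dSt}$ before passing to spans.
\item Your $j$ does not produce the outer faces $d_0,d_n$. These come from the legs $X_n\to X_1\times X_{n-1}$ followed by projections, i.e.\ from inert maps, not from morphisms between active arrows $\langle n\rangle\to\langle 1\rangle$.
\end{enumerate}
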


    \noindent \textbf{The stack $\waldhausen\cohflat(X,\tau)$. }This is the combinatorial data encoding the algebra structure. Next, we transfer these data to $\cohflat(X)$. We have    
    \[
        \waldhausen\perfstack\ \colon \Delta \longrightarrow \operatorname{dSt}
    \]
    by applying the {Waldhausen construction} to $\perfstack$. This is a simplicial derived stack such that 
    \[
    \segal_0\perfstack \cong \operatorname{Spec}(\BC), \ \segal_1\perfstack\cong \perfstack,\ \segal_2\perfstack \cong \perfstack^{\operatorname{ext}}\ ,
    \]
    where $\perfstack^{\operatorname{ext}}$ parametrizes extensions in $\perf$. The stack $\waldhausen\perfstack$ is naturally a { 2--Segal stack}. \newline
    
    \noindent Let $\boldsymbol{\operatorname{T}}$ be a derived stack equipped with a morphism
     \[
        \boldsymbol{\operatorname{T}} \longrightarrow \perfstack
     \]
     We can define a simplicial stack $\waldhausen\boldsymbol{\operatorname{T}}$ associated with the stack $\boldsymbol{\operatorname{T}}$ by setting

     \[\begin{tikzcd}
	{\segal_n\boldsymbol{\operatorname{T}}} & {\segal_n\perfstack} \\
	\boldsymbol{\operatorname{T}} & \perfstack
	\arrow[from=1-1, to=1-2]
	\arrow[from=1-1, to=2-1]
	\arrow[from=1-2, to=2-2]
	\arrow[""{name=0, anchor=center, inner sep=0}, from=2-1, to=2-2]
	\arrow["\lrcorner"{anchor=center, pos=0.125}, draw=none, from=1-1, to=0]
    \end{tikzcd}\]

    \noindent In particular, we can carry out this construction with respect to the embedding \eqref{eqn : flatstack def}. The following is a direct application of Proposition II.3.6 in \cite{dps}.
    
    \begin{proposition}\label{prop : coh is segal}
        The simplicial stack $\waldhausen\cohflat(X,\tau)$ is a 2--Segal stack.
    \end{proposition}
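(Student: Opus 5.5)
The plan is to deduce the statement from the fact that $\waldhausen\perfstack$ is 2--Segal, together with a closure property of the heart $\mathcal{A}_\tau$ of $\tau$ under the operations appearing in the Waldhausen construction. This is exactly the setting of Proposition II.3.6 in \cite{dps}. That result says, roughly: if $\boldsymbol{\operatorname{T}} \to \perfstack$ is a monomorphism (here a Zariski open embedding, or more generally a subfunctor) and the corresponding full subcategories are closed under extensions, subobjects and quotients inside the relevant fibre sequences, then the pulled-back simplicial stack $\waldhausen\boldsymbol{\operatorname{T}}$ is again 2--Segal. So the work is to check these hypotheses for $\cohflat(X,\tau) \to \perfstack$ from \eqref{eqn : flatstack def}.

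First, I will describe $\segal_n\cohflat(X,\tau)$ concretely. By definition it is the pullback of $\segal_n\perfstack \to \segal_1\perfstack = \perfstack$ along $\cohflat(X,\tau)\to\perfstack$, where the map records the total object $F(0,n)$. Since $\cohflat(X,\tau)\to\perfstack$ is a subfunctor, an $S$--point is a Waldhausen diagram in $\operatorname{Perf}(X\times S)$ whose total object is $\underline{\tau}$--flat. The key observation is that all the entries $F(i,j)$ are then automatically $\underline{\tau}$--flat. Indeed, the fibre sequences $F(0,i)\to F(0,j)\to F(i,j)$ in the $S$--family restrict, after base change to each point $t$, to fibre sequences in $\operatorname{Perf}(X_t)$. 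These lie in the heart $(\mathcal{A}_{qc})_t$ precisely when they are short exact sequences. I therefore need that the filtrations in $\segal_n$ be required to have all terms in the heart. If the pullback only constrains the total object, I will instead define $\segal_n\cohflat(X,\tau)$ as the full substack where every $F(i,j)$ is flat; this matches the paper's description of $\segal_2$ as extensions. With that definition, it is an open substack of $\segal_n\perfstack$ by openness of flatness (Proposition \ref{prop : geom locally finite presentation}), applied to finitely many entries.

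Next, I will verify the 2--Segal conditions. For each triangulation of the $(n+1)$--gon, the 2--Segal map $\segal_n \to \segal_2\times_{\segal_1}\cdots\times_{\segal_1}\segal_2$ is an equivalence for $\perfstack$ by Theorem 7.3.3 of \cite{Dyckerhoff_2019}, transferred to stacks as in \cite{Porta2019TwodimensionalCH}. Both sides for $\cohflat(X,\tau)$ are the open substacks cut out by flatness of the entries. So it suffices to check that a diagram lies in the source substack iff its image lies in the target substack. The entries appearing on the right are a subset of all $F(i,j)$, indexed by the edges and diagonals of the triangulation. The remaining entries are recovered as iterated cofibres or extensions of these. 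Because the heart $\mathcal{A}_t$ is closed under extensions, and fibre sequences with two terms in the heart in the appropriate positions yield short exact sequences, every remaining entry lies in the heart pointwise. Hence the open substacks correspond, and since equivalences restrict to equivalences on corresponding opens, we are done.

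The main obstacle is this last bookkeeping step. Showing that flatness of the entries indexed by a triangulation forces flatness of all $F(i,j)$ requires an induction over the triangulation, gluing triangles along diagonals. At each gluing, the new entry sits in a fibre sequence $A\to B\to C$ with $A,C$ in the heart (so $B$ is in the heart by extension-closure), or with $A,B$ in the heart and $C$ a subquotient. In the latter case I would need $C$ in the heart. I expect this to follow because in the relevant Waldhausen square $C$ is simultaneously an extension of heart objects from another triangle. I would try to arrange the induction so that only extension-closure is ever used, which is the key hypothesis of Proposition II.3.6 in \cite{dps}.
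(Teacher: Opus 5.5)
This is essentially the paper's argument: the paper simply invokes Proposition II.3.6 of \cite{dps}, and what you unpack is that proposition's content, namely that $\waldhausen\perfstack$ is 2--Segal, $\cohflat(X,\tau)\to\perfstack$ is a monomorphism, and the heart is closed under extensions, so the 2--Segal maps restrict to the substacks. The step you flag as the main obstacle is immediate: every triangulation of the $(n+1)$--gon contains all boundary edges $\{i-1,i\}$, so each $F(i,j)$ is an iterated extension of constrained entries via the fibre sequences $F(i,j-1)\to F(i,j)\to F(j-1,j)$, and no cofibre case ever arises (openness of flatness is also unnecessary, since an equivalence restricts to an equivalence on the preimage of any monomorphism).
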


\subsection{Categorical Hall algebra of semistable sheaves of fixed slope}\label{subsec : Categorical Hall algebra of semistable sheaves of fixed slope}
     Fix a number $\beta \in \BQ\sqcup\{+\infty\}$. We have a substack
    \begin{equation}\label{eqn : cohssbeta embedding}
        \cohbetassstack \hookrightarrow \cohflat (X)
    \end{equation}
    of semistable coherent sheaves of slope $\beta$. This embedding is open thanks to Lemma 21.12 in \cite{Bayer_2021}. Proceeeding as in Subsection \ref{subsec : waldhausen} we can define the derived simplicial stack $\waldhausen\cohbetassstack$. In particular, under the isomorphism $\segal_2\cohbetassstack \cong (\cohbetassstack)^{\operatorname{ext}}$ we recover the convolution diagram \eqref{eqn : conv for ss in intro} from the introduction
    \[\begin{tikzcd}
	& {(\cohbetassstack)^{\operatorname{ext}}} & \\
	{\cohbetassstack \otimes \cohbetassstack} && \cohbetassstack
	\arrow["\partial_0 \times \partial_2", from=1-2, to=2-1]
	\arrow["\partial_1"', from=1-2, to=2-3]
\end{tikzcd}\]
where the map $\partial_i$ is induced by the $i$-th face map. Explicitely, we have
\[
\partial_0 \times\partial_2 \colon 0\rightarrow F'\rightarrow F \rightarrow F'' \rightarrow 0 \mapsto (F', F'') \quad \text{and} \quad \partial_1  \colon 0\rightarrow F'\rightarrow F \rightarrow F'' \rightarrow 0 \mapsto F\ .
\]
We extract a stable $\infty$--category with a $\mathbb{E}_1$--monoidal structure out of the above correspondence. 
    \begin{definition}
            Let $f \colon \operatorname{X} \rightarrow Y$ be a morphism in $\operatorname{dGeom}$.
        \begin{enumerate}
            \item The morphism $f$ is \textit{derived lci} if for any $\operatorname{Z} \in \operatorname{dGeom^{qc}}$, the pullback $\operatorname{X\times_{Y}Z}$ is a quasi--compact derived geometric stack and the map $\operatorname{X\times_{Y}Z} \rightarrow \operatorname{Z}$ is derived lci.
            \item The morphism $f$ is \textit{locally rpas} if for every connected component $\operatorname{X_0}$ of $\operatorname{X}$, the composite map $\operatorname{X_0}\rightarrow \operatorname{Y}$ is representable by proper algebraic spaces.
            \item The morphism $f$ is \textit{finitely connected} if for any connected component $\operatorname{Y_0}$ of $\operatorname{Y}$, the stack $f^{-1}(\operatorname{Y_0}) = \operatorname{X\times_{Y}Y_0}$ has finitely many connected components.
        \end{enumerate}
    \end{definition}
    \noindent We can define a refined pullback $f^*$ for any derived lci map, and we can define pushforward $f_*$ for any locally rpas map. Let
    \[
      \operatorname{dGeom}^{\operatorname{qc}} \hookrightarrow \operatorname{dSt}  
    \]
    be the subcategory of quasi--compact geometric derived stack, and consider the $(\infty,2)$ category 
    \[
        \operatorname{Corr}^{\times}(\operatorname{dGeom}^{\operatorname{qc}})_{\operatorname{rpas,lci}} \hookrightarrow  \operatorname{Corr}^{\times}(\operatorname{dSt})
    \]
    of correspondences of quasi--compact geometric derived stacks where the horizontal morphisms are ind--derived lci and the vertical morphisms are locally rpas. Building on Gaitsgory--Rozenblyum's work \cite{gaitsgory2017study}, Porta--Sala --- see the appendix of \cite{Porta2019TwodimensionalCH} --- defined a right--lax monoidal functor
    \begin{equation}\label{eqn : coh for qc}\coh^{\operatorname{b}} \colon  \operatorname{Corr}^{\times}(\operatorname{dGeom}^{\operatorname{qc}})_{\operatorname{rpas,lci}} \longrightarrow \operatorname{Cat}^{\operatorname{st}}_{\infty}
    \end{equation}
    \noindent In particular $\mathbb{E}_1$--monoid objects are preserved under this functor. The following Theorem is an application of Corollary II.3.7 in \cite{dps}. See also Corollary II.4.14 in \cite{dps}.

    \begin{theorem}\label{thm : cathabeta}
    The following conditions are satisfied.
    \begin{enumerate}
        \item The stack $\cohbetassstack$ is quasi--geometric and locally of finite presentation. 

        \item\label{itm : catbeta segal} The simplicial stack $\waldhausen\cohbetassstack$ is a 2--Segal space.
        
        \item\label{itm : catmeta smooth} The map 
            \begin{equation}
                \partial_0 \times \partial_2\ \colon \ \segal_2\cohbetassstack \longrightarrow \cohbetassstack \times \cohbetassstack
            \end{equation}
            is quasi--compact, finitely connected and derived lci.
        \item\label{itm : catbeta proper} The map
        \begin{equation}
            \partial_1\ \colon \ \segal_2\cohbetassstack \longrightarrow \cohbetassstack
        \end{equation}
        is locally rpas.
        \end{enumerate}
        Then, $\coh^{\operatorname{b}}(\cohbetassstack)$ has the structure of an $\mathbb{E}_1$--monoidal stable pro--$\infty$--category, whose underlying tensor product is given by the composition
        \[
            \coh^{\operatorname{b}}(\cohbetassstack)\times \coh^{\operatorname{b}}(\cohbetassstack) \xrightarrow{(\partial_{1})_*\ \circ \ (\partial_0 \times\partial_2)^*} \coh^{\operatorname{b}}(\cohbetassstack)\ .
        \]
        In particular, 
        \[ 
            G_0(\cohbetassstack) \hspace{1cm} \text{and} \hspace{1cm} H^{\text{BM}}_*(\cohbetassstack)
        \]
        are unital associative algebras.
    \end{theorem}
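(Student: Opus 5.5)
The plan is to verify the four hypotheses one at a time and then invoke Corollary II.3.7 of \cite{dps} together with the right--lax monoidal functor \eqref{eqn : coh for qc}. That combination produces the $\mathbb{E}_1$--monoidal structure on $\coh^{\operatorname{b}}(\cohbetassstack)$ from the 2--Segal structure, via Theorem \ref{thm : segal monoid}. The statements about $G_0$ and $H^{BM}_*$ then follow by decategorification, exactly as in \cite{Porta2019TwodimensionalCH}.

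\textbf{Condition (1).} The embedding \eqref{eqn : cohssbeta embedding} is Zariski open by Lemma 21.12 of \cite{Bayer_2021}. By Proposition \ref{prop : geom locally finite presentation}, applied with the standard t--structure, $\cohflat(X)$ is geometric and locally of finite presentation. Open substacks inherit both properties.

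\textbf{Condition (2).} Since the embedding is open, $\waldhausen\cohbetassstack$ is obtained from $\waldhausen\cohflat(X)$ by base change along the open substack. The Segal conditions are pullback squares, so they survive provided the following holds: an extension whose outer terms are semistable of slope $\beta$ is again semistable of slope $\beta$, and conversely, a subobject or quotient of slope $\beta$ of a semistable sheaf is semistable. Both facts were recalled in Subsection \ref{subsec : intro sec 2}. Given them, $\segal_n\cohbetassstack$ is exactly the open substack of $\segal_n\cohflat(X)$ where all the graded pieces $F(i,j)$ lie in $\cohbetass$, and Proposition \ref{prop : coh is segal} (equivalently Proposition II.3.6 of \cite{dps}) yields the 2--Segal property.

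\textbf{Condition (3).} The fiber of $\partial_0\times\partial_2$ over $(F',F'')$ is the derived stack of extensions. It is the total space of the complex $\mathrm{RHom}(F'',F')[1]$, which is perfect of Tor--amplitude in $[0,1]$ because $\coh(X)$ is hereditary. So the map is derived lci, and in fact a vector bundle stack, hence smooth on the classical truncation. Quasi--compactness and finite connectedness are checked over each quasi--compact open: on a component of fixed rank and degree the fibers are connected linear stacks, so the preimage has as many components as the pairs of classes adding up to the target class, and there are finitely many such pairs.

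\textbf{Condition (4), the main obstacle.} Properness of $\partial_1$ reduces to showing the following: over a fixed family of semistable sheaves $F$, the space of subsheaves $F'\subset F$ with $F'$ and $F/F'$ semistable of slope $\beta$, in a fixed class, is a closed subspace of the Grothendieck Quot scheme $\operatorname{Quot}(F)$, which is proper. The closedness argument goes as follows. For $F$ semistable, any quotient has slope $\ge\beta$. Requiring slope exactly $\beta$ is therefore a numerical (discrete) condition. Under that condition semistability of both the kernel and the quotient is automatic, so the locus is a union of connected components of $\operatorname{Quot}(F)$ cut out by the Hilbert polynomial. This gives representability by proper algebraic spaces on each connected component, which is precisely local rpas.

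The care needed is in the derived setting. One must check that the derived enhancement of this Quot space is the fiber of $\partial_1$. This holds because the Waldhausen pullback is taken in derived stacks, and the properness of a derived map is tested on classical truncations.
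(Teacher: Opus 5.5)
Your proposal is correct and follows essentially the paper's argument: openness of the semistable locus for (1), closure of $\cohbetass$ under extensions for (2) and (3), and semistability of slope-$\beta$ subsheaves and quotients of a semistable sheaf for (4), followed by Corollary II.3.7 of \cite{dps}. The paper phrases (3) and (4) as pullback squares over $\segal_2\cohflat(X)$ and $\segal_2\cohflat_{\beta}(X)$, which is just the base-change form of your extension-stack and Quot-component computations. One small slip: the preimage under $\partial_0\times\partial_2$ of a connected component of $\cohbetassstack\times\cohbetassstack$ is a single vector bundle stack over it, hence connected, whereas your count of decompositions of a class is the one relevant to $\partial_1$, not to $\partial_0\times\partial_2$.
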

    \begin{proof}
        Proposition \ref{prop : geom locally finite presentation} tells us that the stack $\cohflat(X)$ is quasi--geometric and locally of finite presentation. Since embedding \eqref{eqn : cohssbeta embedding} is open, we deduce that the same holds for $\cohbetassstack$. Condition  \ref{itm : catbeta segal}, follows from Corollary I.5.7 in \cite{dps}. The connected components of the stack $\cohbetassstack$ are quasi--compact  --- see \cite{Huybrechts_Lehn_2010}  ---, so Theorem \eqref{thm : segal monoid} tells us that the 2--Segal stack $\waldhausen\cohbetassstack$ defines an $\mathbb{E}_1$--monoidal object in $\operatorname{Corr}^\times(\operatorname{dGeom}^{\operatorname{qc}})$. Let 
        \begin{equation}\label{eqn : ses in the proof of catha}
            0 \rightarrow F' \rightarrow F \rightarrow F'' \rightarrow 0
        \end{equation}
        be a short exact sequence of coherent sheaves. The condition $F', F'' \in \cohbetass$ implies $F \in \cohbetass$. In particular the diagram
        \[\begin{tikzcd}
	{\segal_2\cohbetassstack} & {\segal_2\cohflat(X)} \\
	{\cohbetassstack \times \cohbetassstack} & {\cohflat(X) \times \cohflat(X)}
	\arrow[from=1-1, to=1-2]
	\arrow["{\partial_0 \times \partial_2}"', from=1-1, to=2-1]
	\arrow["{\partial_0\times \partial_2}", from=1-2, to=2-2]
	\arrow[from=2-1, to=2-2]
\end{tikzcd}\]
is a pullback. This implies condition \ref{itm : catmeta smooth}. Moreover, if all the sheaves in \eqref{eqn : ses in the proof of catha} have slope $\beta$, then $F\in \cohbetass$ implies $F',F''\in \cohbetass$. Then the square
\[\begin{tikzcd}
	{\segal_2\cohbetassstack} & {\segal_2\cohflat_{\beta}(X)} \\
	\cohbetassstack & \cohflat_{\beta}(X)
	\arrow[from=1-1, to=1-2]
	\arrow["{\partial_1}"', from=1-1, to=2-1]
	\arrow["{\partial_1}", from=1-2, to=2-2]
	\arrow[from=2-1, to=2-2]
\end{tikzcd}\]
is a pullback, where $\cohflat_{\beta}(X)$ denotes the open and closed substack of $\cohflat(X)$ parametrizing all sheaves of slope $\beta$. Condition \ref{itm : catbeta proper} follows. Then, the stack $\waldhausen\cohbetassstack$ defines a $\mathbb{E}_1$--monoidal object in $\operatorname{Corr}^{\times}(\operatorname{dGeom}^{\operatorname{qc}})_{\operatorname{rpas,lci}}$, and applying the lax functor \eqref{eqn : coh for qc} completes the proof.
    \end{proof}

\section{Module}
For a given object $\framing$ in $\perf$ and a real number $\beta\in \BR$, we define a derived moduli stack $\pairs$ of $\beta$--stable $\framing$--pairs on $X$. When $\beta \in \BQ$, the derived category of the stack $\pairs$ will support an action of the algebra defined in the previous Section. Moreover, we compare the stack $\pairs$ to other moduli spaces known in the literature.

\subsection{Torsion pairs}
We follow \cite{Bayer_2021} and \cite{Rota_2021}. Recall the definition of torsion pair.

\begin{definition}\label{def : torsionpair}
    Let $\mathcal{A}$ be an abelian category. A \textit{torsion pair} on $\mathcal{A}$ is a pair $(\mathcal{F},\mathcal{T})$ of full subcategories of $\mathcal{A}$ such that
    \begin{enumerate}
        \item For every $T \in \mathcal{T}$ and $F\in \mathcal{F}$, we have $\operatorname{Hom}_{\mathcal{A}}(T,F) =\{0\}$
        \item Every object $A\in \mathcal{A}$ fits into an exact sequence
        $$
         0\rightarrow T \rightarrow A \rightarrow F \rightarrow 0
        $$
        for some $T \in \mathcal{T}$ and $F\in \mathcal{F}$.
    \end{enumerate}
    We refer to $\mathcal{F}$ as the\textit{ torsion free} part, and to $\mathcal{T}$ as the \textit{torsion part}.
\end{definition}

\noindent We list some useful properties of torsion pairs for future reference.

\begin{lemma}\label{lemma : torsionproperties}
    Consider a torsion pair $(\mathcal{F},\mathcal{T})$ in an abelian category $\mathcal{A}$. Let
    $$
        A' \rightarrow A \rightarrow A'' 
    $$
    be a fibered sequence in $\mathcal{A}$. Then the following hold.
    
    \begin{enumerate}
        \item $A\in \mathcal{F} \Rightarrow A'\in \mathcal{F}$
        \item $A\in \mathcal{T} \Rightarrow A''\in \mathcal{T}$
        \item $A'$ and $A''$ are in $\mathcal{F} $, then $A \in \mathcal{F}$
        \item $A'$ and $A''$ are in $\mathcal{T} $, then $A \in \mathcal{T}$.
    \end{enumerate}
\end{lemma}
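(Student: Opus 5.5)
The plan is to derive all four statements directly from the two axioms in Definition \ref{def : torsionpair}. The key tool is the standard characterization of each class as the orthogonal of the other: $\mathcal{F} = \{A \mid \operatorname{Hom}(T,A)=0 \ \forall T\in\mathcal{T}\}$ and $\mathcal{T} = \{A \mid \operatorname{Hom}(A,F)=0 \ \forall F\in\mathcal{F}\}$. We interpret the fibered sequence $A'\rightarrow A \rightarrow A''$ as a short exact sequence $0\rightarrow A'\rightarrow A\rightarrow A''\rightarrow 0$ in $\mathcal{A}$.

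\textbf{Step 1: the orthogonality characterization.} The inclusion $\mathcal{F}\subseteq \mathcal{T}^{\perp}$ is axiom (1). For the converse, take $A$ with $\operatorname{Hom}(T,A)=0$ for all $T\in\mathcal{T}$. Axiom (2) gives $0\rightarrow T\rightarrow A\rightarrow F\rightarrow 0$ with $T \in \mathcal{T}$ and $F \in \mathcal{F}$. The inclusion $T\rightarrow A$ is then zero, so $T=0$ and $A\cong F\in\mathcal{F}$. The argument for $\mathcal{T}={}^{\perp}\mathcal{F}$ is dual: the surjection $A\rightarrow F$ vanishes, so $F=0$. A byproduct is that both classes are closed under isomorphism.

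\textbf{Step 2: the four closure properties.} Each follows from left exactness of $\operatorname{Hom}$.
For (1), a subobject of $A\in\mathcal{F}$ satisfies $\operatorname{Hom}(T,A')\hookrightarrow \operatorname{Hom}(T,A)=0$, so $A'\in\mathcal{F}$ by Step 1.
For (2), dually, $\operatorname{Hom}(A'',F)\hookrightarrow\operatorname{Hom}(A,F)=0$, so $A''\in\mathcal{T}$.
For (3), apply $\operatorname{Hom}(T,-)$ to get the exact sequence $0\rightarrow \operatorname{Hom}(T,A')\rightarrow\operatorname{Hom}(T,A)\rightarrow\operatorname{Hom}(T,A'')$. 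The outer terms vanish, so the middle one does too, and $A\in\mathcal{F}$.
For (4), apply $\operatorname{Hom}(-,F)$ to get $0\rightarrow\operatorname{Hom}(A'',F)\rightarrow\operatorname{Hom}(A,F)\rightarrow\operatorname{Hom}(A',F)$, and conclude in the same way.

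\textbf{Main obstacle.} The only real subtlety is the characterization in Step 1; the rest is routine diagram chasing. One should also check that ``fibered sequence'' in the stable $\infty$--categorical sense, when all three terms lie in the abelian heart, amounts exactly to a short exact sequence, so that the maps used above really are monomorphisms and epimorphisms. This holds because a fiber sequence with all terms in the heart gives a long exact sequence of $H^0$'s in which the neighbouring terms vanish.
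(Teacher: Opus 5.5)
The paper gives no proof of this lemma; it lists the four properties as standard facts ``for future reference'' and moves straight on to tilting. Your argument supplies the standard proof and is correct.

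The orthogonality description $\mathcal{F}=\mathcal{T}^{\perp}$, $\mathcal{T}={}^{\perp}\mathcal{F}$, combined with left exactness of $\operatorname{Hom}(T,-)$ and $\operatorname{Hom}(-,F)$, gives all four items. Your observation that a fiber sequence in $D^b(\mathcal{A})$ with all three terms in the heart is a short exact sequence in the heart matches how the paper uses the lemma. In Propositions \ref{prop : PairsQuot}, \ref{prop : leftthecke} and \ref{prop : righthecke}, the fiber sequences are first recognized as short exact sequences in $\abeta$ or $\coh(X)$, and only then is the lemma invoked.

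One small correction: closure of $\mathcal{F}$ and $\mathcal{T}$ under isomorphism is not a byproduct of your Step 1. It is an input to it. You pass from $A\cong F$ with $F\in\mathcal{F}$ to $A\in\mathcal{F}$, and likewise for $\mathcal{T}$. So repleteness must be part of the definition of a torsion pair, as is the usual convention. Without it the lemma is false. For example, in finite-dimensional vector spaces take $\mathcal{T}$ to be a single zero object and $\mathcal{F}$ a skeleton. Both axioms hold, but a subobject of an object of $\mathcal{F}$ need not lie in $\mathcal{F}$. In every application in the paper the classes are defined by isomorphism-invariant conditions (slopes, cohomology sheaves), so you should simply state this as an assumption rather than claim it as a consequence.
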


\noindent\textbf{Tilting.} Let $\mathcal{D}=D^b(\mathcal{A)}$ be the derived category of bounded complexes in $\mathcal{A}$. The \textit{tilting} of $\mathcal{A}$ with respect to a torsion pair $(\mathcal{F},\mathcal{T})$ is the smallest extesion--closed subcategory of $\mathcal{D}$ containing both $\mathcal{T}$ and $\mathcal{F}[1]$. We denote it as $\mathcal{A}^*=\langle\mathcal{F}[1],\mathcal{T}\rangle$, and it can be explicitely described as
\begin{equation}
    \mathcal{A}^{*} = \left\{G\in \mathcal{D}\Big|  \mathcal{H}^i(G) =0 \text{ if } i\neq 0,-1 \ ; \ \mathcal{H}^0(G) \in \mathcal{T} ; \ \mathcal{H}^{-1}(G) \in \mathcal{F}  
    \right\} \ ,
\end{equation}
where $\mathcal{H}^i$ denotes the cohomology with respect to the standard t--structure $\tau_{\operatorname{std}}$. Moreover, we can define a bounded t--structure $\tau^*$ on $\mathcal{D}$ as
\[
    \mathcal{D}^{\leq 0} = \{G \in \mathcal{D} \Big| \mathcal{H}^i(G)=0 \text{ for } i>0 \text{ and } \mathcal{H}^0(G)\in \mathcal{T}\}
\]
\[
    \mathcal{D}^{\geq 0} = \{G \in \mathcal{D} \Big| \mathcal{H}^i(G)=0 \text{ for } i<-1 \text{ and } \mathcal{H}^{-1}(G)\in \mathcal{F}\}
\]

\begin{proposition}[\cite{macri2019lecturesbridgelandstability}]
    The abelian category $\mathcal{A}^*$ is the heart of the t--structure $\tau^*$ on $\mathcal{D}$.
\end{proposition}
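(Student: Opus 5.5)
The plan is to verify the two defining properties of a heart of a bounded t--structure directly. First, I show that $(\mathcal{D}^{\leq 0},\mathcal{D}^{\geq 0})$ as defined above is a t--structure. Then I identify $\mathcal{D}^{\leq 0}\cap\mathcal{D}^{\geq 0}$ with $\mathcal{A}^*$. Finally I deduce that $\mathcal{A}^*$ is abelian from the general fact that the heart of any t--structure is abelian (Beilinson--Bernstein--Deligne).

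\textbf{Identification of the heart.} If $G$ lies in both halves, then $\mathcal{H}^i(G)=0$ for $i>0$ and for $i<-1$. Moreover $\mathcal{H}^0(G)\in\mathcal{T}$ and $\mathcal{H}^{-1}(G)\in\mathcal{F}$. This is exactly the explicit description of $\mathcal{A}^*$ displayed above. The same description shows that $\mathcal{A}^*$ is generated under extensions by $\mathcal{T}$ and $\mathcal{F}[1]$, via the triangle $\mathcal{H}^{-1}(G)[1]\to G\to\mathcal{H}^0(G)$.

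\textbf{t--structure axioms.} Shift stability, i.e.\ $\mathcal{D}^{\leq 0}[1]\subset\mathcal{D}^{\leq 0}$ and $\mathcal{D}^{\geq 0}[-1]\subset \mathcal{D}^{\geq 0}$, is immediate: shifting moves the relevant cohomology into degrees where no condition is imposed, or where it vanishes.

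For Hom--vanishing, take $A\in\mathcal{D}^{\leq 0}$ and $B\in\mathcal{D}^{\geq 1}=\mathcal{D}^{\geq 0}[-1]$. I use the standard truncation triangles $\tau_{\leq -1}A\to A\to\mathcal{H}^0(A)$ and $\mathcal{H}^0(B)\to B\to\tau_{\geq 1}B$. This reduces the claim to three vanishings:
\begin{itemize}
\item $\operatorname{Hom}(\tau_{\leq -1}A,B)=0$, since $B\in\mathcal{D}_{\operatorname{std}}^{\geq 0}$;
\item $\operatorname{Hom}(\mathcal{H}^0(A),\tau_{\geq1}B)=0$, by the standard t--structure;
\item $\operatorname{Hom}(\mathcal{H}^0(A),\mathcal{H}^0(B))=0$, since $\mathcal{H}^0(A)\in\mathcal{T}$ and $\mathcal{H}^0(B)\in\mathcal{F}$, using the first axiom of Definition \ref{def : torsionpair}.
\end{itemize}

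The main step is the existence of the truncation triangle for an arbitrary $G\in\mathcal{D}$. Using Definition \ref{def : torsionpair}(2), decompose $0\to T\to\mathcal{H}^0(G)\to F\to 0$ with $T\in\mathcal{T}$ and $F\in\mathcal{F}$. Define $G'$ by the octahedral axiom as the cone fitting into $\tau^{\operatorname{std}}_{\leq -1}G\to G'\to T$, obtained by pulling back the triangle $\tau_{\leq -1}G\to G\to \mathcal{H}^0(G)$ along $T\hookrightarrow\mathcal{H}^0(G)$. Then $G'\in\mathcal{D}^{\leq 0}$ and there is a map $G'\to G$.

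Its cone $G''$ fits into a triangle $F\to G''\to\tau_{\geq1}G$, again by the octahedral axiom. Hence $\mathcal{H}^0(G'')=F\in\mathcal{F}$ and $\mathcal{H}^i(G'')=0$ for $i<0$, so $G''\in\mathcal{D}^{\geq 1}$. The obstacle is bookkeeping the octahedron correctly, in particular checking that the cohomology long exact sequence gives $\mathcal{H}^0(G')=T$ and $\mathcal{H}^0(G'')=F$. This follows from the injectivity of $T\to\mathcal{H}^0(G)$.

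Boundedness of $\tau^*$ follows since $\mathcal{D}^{\leq -1}_{\operatorname{std}}\subset\mathcal{D}^{\leq 0}\subset\mathcal{D}^{\leq 0}_{\operatorname{std}}$, so $\tau^*$ is sandwiched between shifts of the bounded standard t--structure.
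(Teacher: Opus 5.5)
The paper gives no proof of this statement; it only cites Macrì--Schmidt, and the result goes back to Happel--Reiten--Smalø. Your argument is the standard direct verification of the t--structure axioms for $(\mathcal{D}^{\leq 0},\mathcal{D}^{\geq 0})$. The identification of the heart, shift stability, the Hom--vanishing reduction and the boundedness check are all fine. There is one misstatement in the truncation step, and it is easily repaired.

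For an arbitrary $G\in\mathcal{D}$ there is no triangle $\tau_{\leq -1}G\to G\to\mathcal{H}^0(G)$. That triangle exists only when $G\in\mathcal{D}^{\leq 0}_{\operatorname{std}}$, which is why it was legitimate in your Hom--vanishing step, where $A\in\mathcal{D}^{\leq 0}$. The fix is to put $\tau_{\leq 0}G$ in the middle:
\begin{itemize}
\item Define $G'$ as the fiber of the composite $\tau_{\leq 0}G\to\mathcal{H}^0(G)\to F$. The octahedral axiom, applied with the triangle $T\to\mathcal{H}^0(G)\to F$ coming from your short exact sequence, gives a triangle $\tau_{\leq -1}G\to G'\to T$.
\item Take $G'\to G$ to be the composite $G'\to\tau_{\leq 0}G\to G$.
\item Apply the octahedral axiom to this composite, using $\operatorname{cone}(G'\to\tau_{\leq 0}G)\cong F$ and $\operatorname{cone}(\tau_{\leq 0}G\to G)\cong\tau_{\geq 1}G$. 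This produces exactly the triangle $F\to G''\to\tau_{\geq 1}G$ that you wrote down.
\end{itemize}
The rest of your argument then goes through unchanged.

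A minor bookkeeping point: $\mathcal{H}^0(G')\cong T$ is read off directly from $\tau_{\leq -1}G\to G'\to T$. Injectivity of $T\to\mathcal{H}^0(G)$ is used earlier. It is what makes $T\to\mathcal{H}^0(G)\to F$ a distinguished triangle, so that the relevant cone is the sheaf $F$ concentrated in degree $0$. This is what gives $\mathcal{H}^0(G'')\cong F$ and $\mathcal{H}^{i}(G'')=0$ for $i<0$.
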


\noindent An object $G$ in the tilted heart $\mathcal{A}^*$ fits in a canonical short exact sequence
\begin{equation}\label{eqn : canonical ses tilting}
    0 \rightarrow \mathcal{H}^{-1}(G)[1] \rightarrow G \rightarrow \mathcal{H}^0(G) \rightarrow 0\ .
\end{equation}
Then, one can easily deduce that the subcategories
\[
    \mathcal{F}[1]=\{G\in \mathcal{A}^* \ \big|\ \mathcal{H}^0(G) =0 \} \hspace{1cm} \text{    and    } \hspace{1cm} \mathcal{T}=\{ G\in \mathcal{A}^* \ \big| \ \mathcal{H}^{-1}(G)=0\} 
\] 
form a torsion pair in the tilted heart $\mathcal{A}^*$. In particular, $\mathcal{F}[1]$ is the torsion part, and $\mathcal{T}$ is the torsion free part. 

\bigskip

\begin{definition}
    An abelian category $\mathcal{A}$ is \textit{noetherian} if for any object $A\in \mathcal{A}$ and for any ascending chain of subobjects in $\mathcal{A}$
    \[
        A_0\subset A_1 \subset \dots \subset A_i \subset \dots \subset A
    \]
    we have $A_i = A_j $ for $i,j$ big enough. 
\end{definition}

\subsection{The tilted heart $\abeta$ }\label{subsec : tilted heart abeta}
Let $X$ be a smooth projective curve defined over the field $\BC$ of complex numbers, and let $\beta\in \BR$. Consider the full subcategories of $\coh(X)$ given by
\begin{equation} \label{eqn : fbetatbeta}
\mathcal{F}^{\beta}=\{ G\in \operatorname{Coh}(X) \ |\ \mu(G')\leq \beta \text{ for all non-zero subsheaves } G'\subseteq G \} \ ,
\end{equation}
\[
\mathcal{T}^{\beta}=\{ G\in \operatorname{Coh}(X) \ |\ \mu(G'') > \beta \text{ for all quotients } G \twoheadrightarrow G''\neq G \}.
\]
\noindent The following is an exercise in \cite{jardim2025stabilityconditionscoherentsystems}.

\begin{lemma}\label{lemma : torsion pair proof}
     The pair $(\mathcal{F}^{\beta}, \mathcal{T}^{\beta})$ is a torsion pair in $\operatorname{Coh}(X)$ 
where $\mathcal{F}^{\beta}$ is the torsion free part, and $\mathcal{T}^{\beta}$ is the torsion part.
\end{lemma}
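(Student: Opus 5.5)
We verify the two axioms of Definition \ref{def : torsionpair} directly, using the Harder--Narasimhan filtration on the curve $X$. Throughout we adopt the convention that torsion sheaves have slope $+\infty$. Under this convention, $\fbeta$ consists of the vector bundles all of whose Harder--Narasimhan factors have slope $\le \beta$. Likewise, $\tbeta$ consists of the sheaves $G$ whose torsion-free quotient $G/G_{\operatorname{tor}}$ has all Harder--Narasimhan factors of slope $>\beta$. A sheaf in $\tbeta$ may have torsion: torsion sheaves lie in $\tbeta$, since all their quotients are torsion and hence have slope $+\infty>\beta$. Our first step is to make these characterizations precise, via the standard facts that $\mu_{\max}(G)$ is the maximal slope of a nonzero subsheaf of $G$, and $\mu_{\min}(G)$ is the minimal slope of a nonzero quotient of $G$.

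\textbf{Vanishing of Hom.} Let $T\in\tbeta$, $F\in\fbeta$ and $f\colon T\to F$ nonzero. The image $I=f(T)$ is a nonzero subsheaf of $F$, so $\mu(I)\le\beta$. In particular $I$ is torsion free, since $F$ is. On the other hand, $I$ is a nonzero quotient of $T$. If $I\neq T$, the definition of $\tbeta$ gives $\mu(I)>\beta$. If $I=T$, then $T\cong I\subseteq F$ has $\mu(T)\le\beta$. We must exclude this case. There is a subtlety: as literally written, $\tbeta$ only constrains proper quotients. So we will check that $\mu(T)>\beta$ holds whenever $T\ne 0$. If $T$ is torsion, this is automatic. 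Otherwise, take the last Harder--Narasimhan quotient of $T/T_{\operatorname{tor}}$. If it is proper, its slope is $>\beta$, and hence $\mu(T)>\beta$ by a see-saw argument. If it is not proper, $T$ is semistable and torsion free, and we fall back on the intended reading that $\mu(G'')>\beta$ holds for all nonzero quotients, including $G$ itself. This interpretation point is where I expect care to be needed, and I would state the convention explicitly. Either way $\mu(I)>\beta$, a contradiction, so $\operatorname{Hom}(T,F)=0$.

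\textbf{Existence of the decomposition.} Given $A\in\coh(X)$, let $A_{\operatorname{tor}}$ be its torsion subsheaf and let
\[
0=E_0\subset E_1\subset\dots\subset E_m=A/A_{\operatorname{tor}}
\]
be the Harder--Narasimhan filtration, with factors of strictly decreasing slopes $\mu_1>\dots>\mu_m$. Let $k$ be the largest index with $\mu_k>\beta$. Define $T\subseteq A$ as the preimage of $E_k$, and set $F=A/T\cong E_m/E_k$. Then $F$ is torsion free with Harder--Narasimhan factors of slopes $\le\beta$, so $\mu_{\max}(F)\le\beta$ and $F\in\fbeta$. Also, $T$ is an extension of $E_k$ by $A_{\operatorname{tor}}$. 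Any nonzero quotient $T\twoheadrightarrow Q$ either is torsion, so $\mu(Q)=+\infty$, or induces a nonzero quotient of $T/T_{\operatorname{tor}}=E_k$, whose slope is at least $\mu_{\min}(E_k)=\mu_k>\beta$. Hence $\mu(Q)>\beta$ in either case, and $T\in\tbeta$.

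The main obstacle is purely bookkeeping. It consists of justifying the facts $\mu_{\max}=$ maximal subsheaf slope and $\mu_{\min}=$ minimal quotient slope, together with the mixed torsion/torsion-free quotient analysis for $T$. Both reduce to the see-saw property of slopes in short exact sequences and the uniqueness of Harder--Narasimhan filtrations, and I will cite \cite{Huybrechts_Lehn_2010} for these.
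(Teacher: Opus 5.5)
Your argument is correct and is essentially the paper's proof: the paper observes that $\mathcal{F}^{\beta}$ and $\mathcal{T}^{\beta}$ are characterized by the slopes of the Harder--Narasimhan factors (with torsion counted as slope $+\infty$) and splits each sheaf along its Harder--Narasimhan filtration, which is what you carry out in detail. You are also right that the condition defining $\mathcal{T}^{\beta}$ must range over all nonzero quotients, including $G$ itself; otherwise a line bundle of degree $\le \beta$ would lie in both $\mathcal{F}^{\beta}$ and $\mathcal{T}^{\beta}$, so this reading is a necessary correction of the paper's wording.
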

\begin{proof}
    We observe that
    \[
\mathcal{F}^{\beta}=\{ G\in \operatorname{Coh}(X) \ |\ \text{the slope of each semistable factor is } \leq \beta \} \ ,
\]
\[
\mathcal{T}^{\beta}=\{ G\in \operatorname{Coh}(X) \ |\ \text{the slope of each semistable factor is } > \beta \}
\]
and use the Harder--Narashiman filtration.
\end{proof}

\noindent  Tilting the standard heart $\coh(X)$ with respect to the above torsion pair yields another t-structure which we denote as $\tau^\beta$, whose heart is given by 
\begin{equation}
    \mathcal{A}^{\beta} = \left\{E\in \perf \Big|  \mathcal{H}^i(E) =0 \text{ if } i\neq 0,-1 \ ; \ \mathcal{H}^0(E) \in \mathcal{T}^{\beta} ; \ \mathcal{H}^{-1}(E) \in \mathcal{F}^{\beta}  
    \right\} \ .
\end{equation}
\noindent We denote as
\[
    \abetatf= \fbeta[1] \hspace{1cm} \abetator = \tbeta 
\]
the induced torsion pair.

\begin{lemma}\label{lemma : notherianity}
The following statements hold.
\begin{enumerate}
    \item  The  t--structure $\tau^{\beta}$ universally satisfies openness of flatness.
    \item If $\beta \in \BQ$ the tilted heart $\abeta$ is noetherian.
\end{enumerate}
\end{lemma}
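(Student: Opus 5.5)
For the first statement I would reduce to the criterion of Bayer et al.\ \cite{Bayer_2021} (Part~I, Theorem~1.3 / Section~6 there, ``tilting preserves openness of flatness''). That result says the following. Suppose a heart $\mathcal{A}$ satisfies openness of flatness and the torsion pair $(\mathcal{F},\mathcal{T})$ is \emph{open}, i.e.\ the conditions ``$E_t\in\mathcal{T}$'' and ``$E_t\in\mathcal{F}$'' are open in families of $\mathcal{A}$-flat objects. Then the tilted heart also universally satisfies openness of flatness.

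The standard heart $\coh(X)$ satisfies openness of flatness, which is the classical fact underlying Proposition~\ref{prop : geom locally finite presentation}. So the work is to check that $\fbeta$ and $\tbeta$ are open in flat families of coherent sheaves on $X\times S\to S$. I would use the description in the proof of Lemma~\ref{lemma : torsion pair proof}: $G\in\fbeta$ iff $\mu_{\max}(G)\le\beta$, and $G\in\tbeta$ iff $\mu_{\min}(G)>\beta$. The HN polygon is upper semicontinuous in flat families, by the Harder--Narasimhan stratification of Shatz, together with the boundedness of the set of possible slopes for fixed rank and degree. Hence $\mu_{\max}$ is upper semicontinuous and $\mu_{\min}$ is lower semicontinuous.

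There is one subtle point: torsion sheaves have $\mu_{\max}=+\infty$, so membership in $\fbeta$ also requires torsion-freeness. Torsion-freeness is open in flat families on a curve (equivalently, local freeness is open). Likewise $\mu_{\min}>\beta$ is an open condition once the torsion part is accounted for, since torsion quotients only enlarge $\tbeta$. These give openness of the torsion pair. Applying the tilting criterion then yields that $\tau^\beta$ universally satisfies openness of flatness.

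For noetherianity when $\beta\in\BQ$, I would follow the classical argument for tilted hearts of slope type (Bridgeland; Polishchuk's criterion as used in \cite{Rota_2021}). Write $\beta=a/b$ and use the additive function $Z(E)=a\operatorname{rk}(E)-b\deg(E)\in\mathbb{Z}$ on $K(X)$. On $\abeta$ this function is nonnegative: for $T\in\tbeta$, $-\deg+\beta\operatorname{rk}\le0$, and on $F[1]$ the sign flips. So $\operatorname{Im}$-like quantity $-Z$ (suitably signed) is a nonnegative integer-valued additive function.

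Take an ascending chain $A_0\subset A_1\subset\dots\subset A$ and set $Q_i=A/A_i$. The value $Z(A_i)$ is nondecreasing and bounded by $Z(A)$, so it stabilizes. Beyond that index the successive quotients $A_{i+1}/A_i$ have $Z=0$. This means they lie in the subcategory generated by $\mathcal{F}^\beta_{=\beta}[1]$, i.e.\ semistable sheaves of slope exactly $\beta$ shifted (objects of $\tbeta$ with $Z=0$ are zero, since even torsion sheaves contribute). Note this needs $\beta$ rational, since otherwise $Z$ is not discrete.

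The remaining step is to show the chain cannot grow indefinitely by such pieces. Look at cohomology sheaves of $Q_i$. The exact sequences $0\to A_{i+1}/A_i\to Q_i\to Q_{i+1}\to0$ give surjections $\mathcal{H}^0(Q_i)\twoheadrightarrow\mathcal{H}^0(Q_{i+1})$, which stabilize because $\coh(X)$ is noetherian. After that they give injections $\mathcal{H}^{-1}(Q_i)\hookrightarrow\mathcal{H}^{-1}(Q_{i+1})$ whose cokernels are slope-$\beta$ semistable, with $\mathcal{H}^{-1}(Q_{i+1})\in\fbeta$. The rank of $\mathcal{H}^{-1}(Q_i)$ is then bounded: it is controlled by $\operatorname{rk}$ of $\mathcal{H}^{-1}(A)$ plus $\mathcal{H}^0$ data via the long exact sequence. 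So the ranks stabilize, and then the cokernels have rank $0$ and slope $\beta$, hence vanish. The chain therefore stabilizes.

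I expect this last bookkeeping — bounding $\mathcal{H}^{-1}(Q_i)$ and ruling out infinitely many slope-$\beta$ extensions — to be the main obstacle. I would handle it exactly as in the proof of noetherianity of $\operatorname{Coh}^{\beta}$ for surfaces (Huybrechts' lectures / \cite{macri2019lecturesbridgelandstability}, Lemma on noetherianity of tilts), specialized to curves, where it simplifies.
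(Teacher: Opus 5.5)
\textbf{First statement.} Your argument is the paper's (Proposition~\ref{prop : open tp} and Remark~\ref{rmk : openness of flatness}). You show the torsion pair $(\fbeta,\tbeta)$ is open using Shatz semicontinuity of $\mu_{\pm}$, and then invoke the general fact that tilting along an open torsion pair preserves openness of flatness. The paper takes this fact from Proposition II.2.52 of \cite{dps}.

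\textbf{Second statement.} The paper only cites Lemma 6.17 of \cite{macri2019lecturesbridgelandstability}. Your reconstruction of that argument is correct up to the point where, for $i\ge i_0$, every $A_{i+1}/A_i$ is of the form $G_i[1]$ with $G_i\in\cohbetass$. The last step, however, has the arrows reversed. The long exact sequence of standard cohomology sheaves of $0\to G_i[1]\to Q_i\to Q_{i+1}\to 0$ gives
\[
0\to G_i\to \mathcal{H}^{-1}(Q_i)\to \mathcal{H}^{-1}(Q_{i+1})\to 0, \qquad \mathcal{H}^0(Q_i)\xrightarrow{\sim}\mathcal{H}^0(Q_{i+1}).
\]
So $\mathcal{H}^{-1}(Q_{i+1})$ is a \emph{quotient} of $\mathcal{H}^{-1}(Q_i)$ with slope-$\beta$ semistable kernel. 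It is not a sheaf containing $\mathcal{H}^{-1}(Q_i)$ with semistable cokernel. Because you have ranks increasing, your conclusion needs an a priori upper bound on $\operatorname{rk}\mathcal{H}^{-1}(Q_i)$, and you do not establish one. The sequence for $0\to A_i\to A\to Q_i\to 0$ only gives $\operatorname{rk}\mathcal{H}^{-1}(Q_i)\le \operatorname{rk}\mathcal{H}^{-1}(A)+\operatorname{rk}\mathcal{H}^0(A_i)$, and nothing controls $\mathcal{H}^0(A_i)$. As written, that step does not go through.

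\textbf{The fix.} With the correct direction, the ``bookkeeping'' you worried about disappears. The sequence $\operatorname{rk}\mathcal{H}^{-1}(Q_i)$, $i\ge i_0$, is a nonincreasing sequence of nonnegative integers. It drops by $\operatorname{rk}G_i\ge 1$ whenever $G_i\neq 0$, since a nonzero sheaf of finite slope $\beta$ has positive rank. Hence only finitely many $G_i$ are nonzero, and the chain stabilizes. You need neither a comparison with $\mathcal{H}^{-1}(A)$ nor the noetherianity of $\coh(X)$ for the $\mathcal{H}^0$'s, since those maps are already isomorphisms once $A_{i+1}/A_i\cong G_i[1]$. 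With this correction your argument is a complete, self-contained proof of the part the paper outsources to \cite{macri2019lecturesbridgelandstability}.
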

\begin{proof}
The first statement follows from the general theory on Bridgeland stability conditions. See also the following Remark \ref{rmk : openness of flatness}. The noetherianity statement for rational $\beta$ is proven in Lemma 6.17 in \cite{macri2019lecturesbridgelandstability}. 
\end{proof}

\begin{remark}\label{rmk : noeth fails if beta is not rational}
    The tilting procedure usually destroys the noetherianity property. Indeed, if $\beta\in \BR - \BQ$, Noetherianity of the t--structure $\tau^\beta$ fails --- See Example 3.3 in \cite{Rota_2021}. A similar phenomenon has been observed recently in the context of coherent systems on curves --- see \cite{feyzbakhsh2025derivedcategorycoherentsystems} and \cite{jardim2025stabilityconditionscoherentsystems}. However it is still true that the t--structure $\tau^\beta$ universally satisfies openness of flatness, as we discuss in Remark \ref{rmk : openness of flatness}. 
\end{remark}
\bigskip

\noindent We end the Subsection with a remark which will be useful later on. 

\begin{lemma}\label{lemma : cohbetacirc}
    The inclusion
    \begin{equation}\label{eqn : cohbetacirc}
        \cohflat(X,\tau^\beta)^\circ \hookrightarrow \cohflat(X,\tau^{\beta})
    \end{equation}
    defined by the condition
    \[
        \mu(\mathcal{H}^{-1}(G))<\beta  
    \]
    is representable by Zariski open embeddings.
\end{lemma}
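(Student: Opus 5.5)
Since openness can be checked after base change along an arbitrary morphism from a derived affine scheme, and since the open locus of a derived stack is determined by its classical truncation, the plan is to fix an affine (underived) scheme $S$ and a family $G \in \operatorname{Perf}(X\times S)$ whose fibers $G_s$ lie in $\abeta$. We will show that the set
\[
U = \{ s\in S \ |\ \mu(\mathcal{H}^{-1}(G_s)) < \beta \}
\]
is open. The strategy is to reduce the strict slope condition to an open condition on the degree and rank of the fiber cohomology, using semicontinuity.

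First, observe that $\mathcal{H}^{-1}(G_s)\in\fbeta$, so every subsheaf of it has slope at most $\beta$. In particular $\mu(\mathcal{H}^{-1}(G_s))\le\beta$, and it is torsion free, hence a vector bundle on the curve. The complement of $U$ is therefore the locus where $\mathcal{H}^{-1}(G_s)$ is either zero or of slope exactly $\beta$. Since $\mathcal{H}^{-1}(G_s)\in\fbeta$, slope exactly $\beta$ means it is semistable of slope $\beta$. We plan to show that this complement is closed.

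Second, we would use the local constancy of discrete invariants. The class of $G_s$ in numerical K-theory, given by $(\operatorname{rk},\deg)$, is locally constant in $s$ because $G$ is perfect and $S$ may be assumed connected. Writing $(r_{-1},d_{-1})$ and $(r_0,d_0)$ for the invariants of $\mathcal{H}^{-1}(G_s)$ and $\mathcal{H}^0(G_s)$, we have $r_0-r_{-1}$ and $d_0-d_{-1}$ constant. Moreover, $\dim \mathcal{H}^{-1}(G_s)\otimes k(x)$ is upper semicontinuous in $(s,x)$, since $G$ has tor-amplitude $[-1,0]$ fiberwise. From this it follows that $r_{-1}$ is upper semicontinuous in $s$, and since $\mathcal{H}^{-1}$ is the kernel of a map of vector bundles locally, it can only jump up on closed subsets.

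The key step, and the expected main obstacle, is controlling the slope rather than just the rank: the pair $(r_{-1},d_{-1})$ can jump at special points. We would handle this by stratifying $S$ into finitely many locally closed pieces on which $(r_{-1},d_{-1})$ is constant. On each piece, $\mathcal{H}^{-1}(G)$ commutes with base change and forms a flat family. We would then prove the specialization claim: if $s_0$ lies in the closure of a stratum where $\mu(\mathcal{H}^{-1}) <\beta$, then $\mu(\mathcal{H}^{-1}(G_{s_0}))<\beta$ as well. The argument is that the kernel at the special point contains, up to the flat limit, the limit of the generic kernels as a subsheaf. Under the jump, the extra part of $\mathcal{H}^{-1}(G_{s_0})$ arises from a piece of $\mathcal{H}^0$, which has slopes $>\beta$; this looks like the wrong direction, so instead we argue via the condition that the HN factors of $\mathcal{H}^{-1}$ have slope $\le\beta$. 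Since $\mathcal{H}^{-1}(G_{s_0})\in\fbeta$, its maximal-slope HN factor has slope at most $\beta$. The limit of the generic kernel $K_\eta$ is a subsheaf $K_0\subseteq \mathcal{H}^{-1}(G_{s_0})$ with the same rank and degree as $K_\eta$, hence of slope $<\beta$ (when $K_\eta\neq 0$). The quotient $\mathcal{H}^{-1}(G_{s_0})/K_0$ injects into the limit of $\mathcal{H}^0$, forcing degree constraints. If the whole of $\mathcal{H}^{-1}(G_{s_0})$ had slope $\beta$, it would be semistable, and its quotient by $K_0$ would have slope $>\beta$ while mapping to a $\tbeta$-type object. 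We would try to derive a contradiction from $\operatorname{Hom}(\tbeta,\fbeta)=0$ applied within the exact triangle $K_0[1]\to G_{s_0}\to \dots$; this is the step we are least sure of. As a fallback, we would instead invoke openness of semistability in families (Lemma 21.12 in \cite{Bayer_2021}) applied to $G$ viewed in a slightly perturbed heart $\mathcal{A}^{\beta-\epsilon}$, using Remark \ref{rmk : openness of flatness}. The locus where $G_s\in\mathcal{A}^{\beta'}$ for $\beta'<\beta$ close to $\beta$ is open by Lemma \ref{lemma : notherianity}, and for a fixed numerical class there are only finitely many relevant slopes. This locus coincides with $U$ (together with the case $\mathcal{H}^{-1}=0$, which is handled by the rank semicontinuity above), giving openness directly.
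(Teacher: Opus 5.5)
Your main line has a genuine gap, because it runs in the wrong direction. Since $U$ is constructible, proving it open means showing that its complement is stable under specialization. Your ``specialization claim'' says instead that a limit of points with $\mu(\mathcal{H}^{-1})<\beta$ stays in $U$; that would make $U$ closed, and it is false. Take $\beta=0$, $S=\mathbb{A}^1$ and $G_s=[\mathcal{O}_X^{\oplus 2}\xrightarrow{s\pi}\mathcal{O}_X(D)]$ in degrees $-1,0$, where $\deg D>0$ and $\pi$ is given by two sections of $\mathcal{O}_X(D)$ without common zeros. For $s\neq 0$ one has $\mathcal{H}^{-1}(G_s)=\mathcal{O}_X(-D)$ and $\mathcal{H}^0(G_s)=0$. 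At $s=0$ one has $\mathcal{H}^{-1}(G_0)=\mathcal{O}_X^{\oplus 2}$ and $\mathcal{H}^0(G_0)=\mathcal{O}_X(D)$. Every fiber lies in $\mathcal{A}^0$, $U=\{s\neq 0\}$, and $0\in\overline{U}\setminus U$. Here $\mathcal{H}^{-1}(G_0)/K_0\cong\mathcal{O}_X(D)\cong\mathcal{H}^0(G_0)$, so no contradiction can be extracted from $\operatorname{Hom}(\tbeta,\fbeta)=0$. Run in the correct direction and for the maximal slope, your limit-of-kernels idea does work: a subsheaf of slope $\beta$ of the generic kernel specializes to a subsheaf of $\mathcal{H}^{-1}(G_{s_0})$ with the same rank and degree.

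The set $U$ you define is also not the right one, in two ways. First, counting $\mathcal{H}^{-1}(G_s)=0$ in the complement destroys openness. For $G_s=[\mathcal{O}_X(-p)\xrightarrow{s\sigma}\mathcal{O}_X(p)]$ with $0\neq\sigma\in H^0(\mathcal{O}_X(2p))$, the kernel vanishes exactly for $s\neq 0$, so your $U$ would be $\{0\}$. Second, the total-slope condition is not open at all. For $G_s=[\mathcal{O}_X\oplus\mathcal{O}_X(-p)\xrightarrow{(0,\,s\sigma)}\mathcal{O}_X(p)]$ all fibers lie in $\mathcal{A}^0$. We have $\mathcal{H}^{-1}(G_s)=\mathcal{O}_X$ for $s\neq 0$, while $\mathcal{H}^{-1}(G_0)=\mathcal{O}_X\oplus\mathcal{O}_X(-p)$ has slope $-\tfrac12$. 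Hence $\{s\mid\mu(\mathcal{H}^{-1}(G_s))<0\}=\{0\}$. The condition in the statement therefore has to be read as ``every nonzero subsheaf of $\mathcal{H}^{-1}(G)$ has slope $<\beta$'', i.e.\ $\mathcal{H}^{-1}(G)\in\widetilde{\mathcal{F}}^{\beta}$ with $0$ allowed. That is what the paper's proof establishes.

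Your fallback is the right idea and is essentially the paper's argument. The paper tilts by the tweaked pair $(\widetilde{\mathcal{F}}^{\beta},\widetilde{\mathcal{T}}^{\beta})$ and identifies the substack with $\cohflat(X,\tau^\beta)\cap\cohflat(X,\widetilde{\tau}^\beta)$, both open in $\perfstack$. You use $\tau^{\beta-\epsilon}$ instead. Since $\tbeta\subseteq\mathcal{T}^{\beta-\epsilon}$, for an $S$-point of $\cohflat(X,\tau^\beta)$ the locus $\{G_s\in\mathcal{A}^{\beta-\epsilon}\}$ equals $\{\mathcal{H}^{-1}(G_s)\in\mathcal{F}^{\beta-\epsilon}\}$. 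This coincides with $\{\mathcal{H}^{-1}(G_s)\in\widetilde{\mathcal{F}}^{\beta}\}$ once no element of $\bigcup_{k\le R}\frac1k\mathbb{Z}$ lies in $(\beta-\epsilon,\beta)$, where $R$ bounds the ranks of the $\mathcal{H}^{-1}(G_s)$. It is open by openness of flatness for $\tau^{\beta-\epsilon}$ (Remark \ref{rmk : openness of flatness}, valid for every real parameter); openness of semistability plays no role.

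Two corrections are needed. This locus is not your total-slope $U$, by the last example. And $R$ must come from the quasi-compactness of $S$, not from the numerical class of $G$, which need not bound $\operatorname{rk}\mathcal{H}^{-1}(G_s)$; this is harmless since openness is local on the base. With these changes the fallback is a complete proof. Compared with the paper's, it reuses an already established openness result instead of introducing $\widetilde{\tau}^\beta$, at the price of an $\epsilon$ depending on the base.
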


\begin{proof}
    One can give a tweaked definition of the torsion pair \eqref{eqn : fbetatbeta} by setting
\begin{equation}\label{eqn : tweaked tp}
    \widetilde{\mathcal{F}}^{\beta}=\{ G\in \operatorname{Coh}(X) \ |\ \mu(G')< \beta \text{ for all non-zero subsheaves } G'\subseteq G \} 
\end{equation}
\[
\widetilde{\mathcal{T}}^{\beta}=\{ G\in \operatorname{Coh}(X) \ |\ \mu(G'') \geq \beta \text{ for all quotients } G \twoheadrightarrow G''\neq G \}.
\]
This definition induces a torsion pair in $\coh(X)$ by the same argument as in Lemma \ref{lemma : torsion pair proof}. If we denote by $\widetilde{\tau}^\beta$ the t--structure whose heart is the tilting of $\coh(X)$ with respect to this torsion pair, we get a substack
\[
    \cohflat(X,\widetilde{\tau}^{\beta}) \longrightarrow \perfstack
\]
As before, this morphism is representable by Zariski--open embeddings. Now observe that the stack $\cohflat(X,\tau^\beta)^\circ$ coincides with the intersection
\[
    \cohflat(X,\tau^{\beta}) \cap \cohflat(X,\widetilde{\tau}^{\beta})\ ,
\]
and the claim follows.
\end{proof}

\begin{remark}
    The openness of the stack \eqref{eqn : cohbetacirc} in $\perfstack$ follows from Proposition 20.8 in \cite{Bayer_2021}. In particular, it is described in terms of \textit{slicing} associated to a Bridgeland stability condition.
\end{remark}

\subsection{Derived stack of $\beta$--stable $\framing$--pairs}\label{subsec : derived stack of pairs} 
This subsection is devoted to defining and describing the spaces that support the action in Theorem \ref{theo 2}. 

\bigskip

\noindent \textbf{Stacks of torsion and torsion free objects. }We can define a derived stack $\tbetastack$ parametrizing flat families of sheaves in $\tbeta$. Specifically, $\tbetastack$ is defined by the pullback square
\begin{equation}\label{eqn : tstd open}
\begin{tikzcd}
	\tbetastack & {\cohflat(X,\tau^\beta)} \\
	{\cohflat(X)} & \perfstack
	\arrow[from=1-1, to=1-2]
	\arrow[from=1-1, to=2-1]
	\arrow[from=1-2, to=2-2]
	\arrow[""{name=0, anchor=center, inner sep=0}, from=2-1, to=2-2]
	\arrow["\lrcorner"{anchor=center, pos=0.125}, draw=none, from=1-1, to=0] \ 
\end{tikzcd}
\end{equation}
where the bottom horizontal map and the right vertical maps are given by the embedding \eqref{eqn : flatstack def} for the t--structures $\tau_{\operatorname{std}}$ and $\tau^\beta$, respectively. Similarly, we define a derived stack $\fbetastack$ parametrizing flat families of sheaves in $\fbeta$. Let $[1] \colon \perfstack \rightarrow \perfstack$ be the morphism induced by the shift by $1$ in $\perf$. Then we can also define the stack $\fbetastack$ via the pullback
\begin{equation}
    \begin{tikzcd}
	\fbetastack && {\cohflat(X,\tau^\beta)} \\
	{\cohflat(X)} & \perfstack & \perfstack
	\arrow[from=1-1, to=1-3]
	\arrow[from=1-1, to=2-1]
	\arrow[from=1-3, to=2-3]
	\arrow[""{name=0, anchor=center, inner sep=0}, from=2-1, to=2-2]
	\arrow["{[1]}", from=2-2, to=2-3]
	\arrow["\lrcorner"{anchor=center, pos=0.125}, draw=none, from=1-1, to=0]
\end{tikzcd}
\end{equation}

\begin{proposition}\label{prop : open tp}
    The morphisms
    \begin{equation}\label{eqn : embeddings of tbeta and fbeta}
        \fbetastack \rightarrow \cohflat(X) \hspace{1cm} \text{and} \hspace{1cm} \tbetastack\rightarrow\cohflat(X)
    \end{equation}
    are representable by Zariski open embeddings.
\end{proposition}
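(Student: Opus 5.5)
The plan is to deduce both statements from the fact that open embeddings are stable under base change, combined with Lemma \ref{lemma : notherianity} and Proposition \ref{prop : geom locally finite presentation}. By Lemma \ref{lemma : notherianity}, the t--structure $\tau^\beta$ universally satisfies openness of flatness. Proposition \ref{prop : geom locally finite presentation} then says that the right vertical map $\cohflat(X,\tau^\beta)\rightarrow \perfstack$ in \eqref{eqn : tstd open} is representable by Zariski open embeddings. The square \eqref{eqn : tstd open} is a pullback, and representable open embeddings are stable under base change. Hence $\tbetastack \rightarrow \cohflat(X)$ is representable by Zariski open embeddings.

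For $\fbetastack$ we argue the same way using the second pullback square. The map $[1]\colon \perfstack\rightarrow\perfstack$ is an equivalence, with inverse induced by $[-1]$. So the base change of $\cohflat(X,\tau^\beta)\rightarrow\perfstack$ along $[1]$ is again representable by open embeddings; concretely, it is the substack of complexes $E$ with $E[1]$ fiberwise in $\abeta$. Pulling this back further along $\cohflat(X)\rightarrow\perfstack$ gives $\fbetastack$. This map is therefore representable by Zariski open embeddings.

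The only point needing care is to check that the pullbacks parametrize the intended objects, so that the statement is not vacuous or wrong. On $S$--points, a family $G\in\operatorname{Coh}(X\times S)$, flat over $S$, lies in the pullback \eqref{eqn : tstd open} if and only if each fiber $G_s$ is in $\abeta$. Since $G_s$ is a sheaf, this holds if and only if $\mathcal{H}^{-1}(G_s)=0$ and $G_s=\mathcal{H}^0(G_s)\in\tbeta$. Similarly, $G_s[1]\in\abeta$ holds if and only if $G_s\in\fbeta$. One subtlety is that over a derived base the fibers $G_s$ should be taken as derived fibers. By flatness with respect to $\tau_{\operatorname{std}}$, these are honest sheaves, so the description above holds.

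I expect the main obstacle to be justifying the first statement of Lemma \ref{lemma : notherianity}, namely openness of flatness for $\tau^\beta$, since everything else is formal. If needed, I would argue it directly for sheaves. For a flat family $G$ on $X\times S$, the locus of $s$ where every Harder--Narasimhan factor of $G_s$ has slope $>\beta$ (resp.\ $\le\beta$) is open. This follows from upper semicontinuity of the Harder--Narasimhan polygon, in the form: the minimal slope $\mu_{\min}$ is lower semicontinuous and the maximal slope $\mu_{\max}$ is upper semicontinuous in flat families on a curve. One would also need to handle torsion subsheaves, where $\mu=+\infty$. For $\fbeta$ this requires torsion-freeness, which is an open condition. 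For $\tbeta$ there is no such constraint, since torsion sheaves lie in $\tbeta$.
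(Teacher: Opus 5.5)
Your base-change step is formally correct, but it runs the paper's logic backwards. The paper proves this proposition directly. Only afterwards, in Remark \ref{rmk : openness of flatness}, does it deduce that $\tau^\beta$ universally satisfies openness of flatness, by feeding openness of the torsion pair $(\fbeta,\tbeta)$ into Proposition II.2.52 of \cite{dps}. The proof of Lemma \ref{lemma : notherianity}(1) points to exactly that remark, and the general theory of tilting in families obtains openness of flatness for a tilted heart in the same order. So invoking Lemma \ref{lemma : notherianity}(1) and Proposition \ref{prop : geom locally finite presentation} here is circular: you assume a stronger fact whose justification goes through the statement you are proving.

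Your last paragraph is where the real proof lives, and it is essentially the paper's argument. Restrict to a connected $S$ and a flat family of fixed type $(r,d)$, and use Shatz's semicontinuity of $\mu_+$ (upper) and $\mu_-$ (lower) on the fibres. Note, however, that this does not prove Lemma \ref{lemma : notherianity}(1). That lemma concerns arbitrary $S$-perfect complexes $E$, whose $\mathcal{H}^{-1}$ and $\mathcal{H}^{0}$ need not be flat over $S$, and passing from sheaves to such complexes needs the tilting result above. What the argument does prove, directly, is the proposition itself, so you should drop the base-change detour and present it as the proof.

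One step is also missing there. Upper semicontinuity of $\mu_{\max}$ only makes the sets $\{\mu_{\max}<c\}$ open, not $\{\mu_{\max}\le\beta\}$. You need that, in fixed rank $r$, slopes of nonzero subsheaves lie in a discrete set (denominators at most $r$, plus $+\infty$). Then $\{\mu_{\max}\le\beta\}=\{\mu_{\max}<\beta+\epsilon\}$ for small $\epsilon>0$, which is the $\epsilon$-step in the paper. The $\tbeta$ side needs no such adjustment, since $\{\mu_{\min}>\beta\}$ is open by lower semicontinuity. Your handling of torsion, with value $+\infty$, matches the paper's.
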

\begin{proof}
    This statement follows from the general results of \cite{Bayer_2021} in the underived setting. Observe that the derived stack $\cohflat(X)$ has a decomposition into open and closed substacks
\[
    \cohflat(X) = \bigsqcup_{r,d} \cohflat_{r,d}(X)
\]
according to the topological type. We have a similar decomposition for the derived stacks $\tbetastack$ and $\fbetastack$. Let $S$ be a connected algebraic space. We want to show that the left vertical map in the squares
    \begin{equation}\label{eqn : cohflat is representable}
\begin{tikzcd}
	{S\times_{\cohflat(X)} \cohflat_{\operatorname{t.f.}, (r,d)}(X)} & \cohflat_{\operatorname{t.f.}, (r,d)}(X) & {S \times_{\cohflat(X)} \cohflat_{\operatorname{tor}, (r,d)}(X)} & \cohflat_{\operatorname{tor}, (r,d)}(X) \\
	S & {\cohflat_{r,d}(X)} & S & {\cohflat_{r,d}(X)}
	\arrow[from=1-1, to=1-2]
	\arrow[from=1-1, to=2-1]
	\arrow[from=1-2, to=2-2]
	\arrow[from=1-3, to=1-4]
	\arrow[from=1-3, to=2-3]
	\arrow[from=1-4, to=2-4]
	\arrow[from=2-1, to=2-2]
	\arrow[from=2-3, to=2-4]
\end{tikzcd}
    \end{equation}
are open embeddings, for any $r,d$. By the Yoneda lemma, the bottom horizontal map is identified by a flat family of coherent sheaves on $X$, parametrized by $S$. Such family is given by a sheaf $T\in \coh(X \times S)$. For any $s\in S$ we denote as $T_s\in \coh(X\times \{s\})$ the restriction. Then the pullbacks
\begin{equation}\label{eqn : S'}
    S\times_{\cohflat(X)} \cohflat_{\operatorname{t.f.}, (r,d)}(X) = \{s\in S \ \Big| \ T_s \in \fbeta \}
\end{equation}
\[
    S\times_{\cohflat(X)}\cohflat_{\operatorname{tor}, (r,d)}(X) = \{s\in S \ \Big| \ T_s \in \tbeta \}
\]
are algebraic spaces. Hence the right vertical maps in \eqref{eqn : cohflat is representable} are representable. We have to prove that such maps are open embeddings. It follows from the Corollary to Proposition 10 in \cite{shatz} --- see also Lemma 20.9 in \cite{Bayer_2021} --- that 
\begin{equation}\label{eqn : mu+}
     S \longrightarrow \BQ \cup \{+\infty\}, \quad
    	 s \mapsto \mu_+(T_s) =\max\{ \mu(T'_s) \ \Big|\  T'_S \text{ is a proper subsheaf of } T_s \}
\end{equation}
is a upper semi--continuous and constructible function, and 
\begin{equation}\label{eqn : mu-}
  S \longrightarrow \BQ \cup \{+\infty\}, \quad
    	 s \mapsto \mu_-(T_s) =\min\{ \mu(T''_s) \ \Big|\  T''_S \text{ is a proper quotient of } T_s \}  
\end{equation}
is lower semi--continuous constructible function. Observe that for any fixed topological type we have an equality $\mu_+^{-1}([-\infty , \beta]) = \mu_+^{-1}([-\infty , \beta + \epsilon))$, for a suitable $\epsilon > 0$. In particular,
\[
    S\times_{\cohflat(X)} \cohflat_{\operatorname{t.f.}, (r,d)}(X)=  \mu_+^{-1}([-\infty, \beta + \epsilon)) 
\]
and
\[
    S\times_{\cohflat(X)} \cohflat_{\operatorname{tor}, (r,d)}(X) = \mu_-^{-1}((\beta,+\infty])  \ 
\]
are open in $S$. Therefore 
\[
    S\times_{\cohflat(X)} \cohflat_{\operatorname{t.f.}, (r,d)}(X) = \mu_+^{-1}([-\infty, \beta]) 
\]
is open in $S$, as we had to prove. 

\end{proof}

\begin{remark}\label{rmk : openness of flatness}
    Proposition \ref{prop : open tp} says that the torsion pair $(\fbeta, \tbeta)$ is open in the sense of Definition II.2.58 in \cite{dps}. Since the standard t--structure $\tau_{\operatorname{std}}$ universally satisfies openness of flatness, Proposition II.2.52 in \cite{dps} implies that the t--structure $\tau^\beta$ universally satisfies openness of flatness for all $\beta \in \BR$.
\end{remark}

\noindent Repeating the construction, we get stacks
\begin{equation}\label{eqn : tbeta open}
   \tbetastack\cong \abetatfstack\rightarrow \cohflat(X,\tau^\beta) \hspace{0.5cm} \text{and} \hspace{0.5cm}  \fbetastack[1] \cong \abetatorstack\rightarrow \cohflat(X,\tau^\beta)
\end{equation}
where $\fbetastack[1]$ denotes the image of the stack $\fbetastack$ under the shift morhphism in $\perfstack$. 

\begin{corollary}
     The morphisms \eqref{eqn : tbeta open} are representable by Zariski--open embeddings.
\end{corollary}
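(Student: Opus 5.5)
The plan is to reduce the Corollary to Proposition \ref{prop : open tp} by identifying each stack in \eqref{eqn : tbeta open} with a fibre product over $\perfstack$.

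First, I will show that $\tbetastack \to \cohflat(X,\tau^\beta)$ is the base change of the open embedding $\cohflat(X) \to \perfstack$ for the standard t--structure. Openness here comes from Proposition \ref{prop : geom locally finite presentation}, since $\tau_{\operatorname{std}}$ universally satisfies openness of flatness. This identification is exactly the pullback square \eqref{eqn : tstd open}, and Zariski open embeddings are stable under base change, so this map is a Zariski open embedding. Concretely, for a $\tau^\beta$--flat family $E$ on $X\times S$, the locus of points $s$ where $E_s$ is a sheaf, i.e. where $\mathcal{H}^{-1}(E_s)=0$, is open.

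Second, for $\fbetastack[1]\to\cohflat(X,\tau^\beta)$, I will use that the shift $[1]$ is an automorphism of $\perfstack$. The composite $\cohflat(X)\xrightarrow{[1]}\perfstack$ is then an open embedding with image the families whose fibres are shifted sheaves. Its base change along $\cohflat(X,\tau^\beta)\to\perfstack$ is, by the second pullback diagram, the stack $\fbetastack$, viewed as $\fbetastack[1]$ inside $\cohflat(X,\tau^\beta)$. Open embeddings are again stable under base change, so this map is also a Zariski open embedding.

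The only point requiring care is checking that these fibre products really parametrize fibrewise objects of $\tbeta$ and of $\fbeta[1]$, respectively. This is where I expect the main obstacle, mild as it is. For a point $E_s$ lying in $\abeta$ and in $\coh(X)$, we have $\mathcal{H}^{-1}(E_s)=0$, so $E_s=\mathcal{H}^0(E_s)\in\tbeta$ by the explicit description of $\abeta$. Symmetrically, if $E_s[-1]$ is a sheaf, then $\mathcal{H}^0(E_s)=0$ and $E_s\in\fbeta[1]$. Conversely, objects of $\tbeta$ and of $\fbeta[1]$ lie in both stacks. Since the conditions are fibrewise and the stacks are defined as derived pullbacks, no further derived subtleties arise.

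As an alternative route I could compose with Proposition \ref{prop : open tp}, but the argument above is more direct.
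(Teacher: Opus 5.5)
Your proof is correct, and it takes a different and more direct route than the paper. The paper argues through openness: $\tau^\beta$ universally satisfies openness of flatness, so $\cohflat(X,\tau^\beta)\to\perfstack$ is open. It then invokes Proposition~\ref{prop : open tp}, so that $\tbetastack$ and $\fbetastack[1]$ are open substacks of $\perfstack$ lying inside $\cohflat(X,\tau^\beta)$, hence open in it.

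You instead observe that the morphisms in question are the \emph{other} legs of the defining square \eqref{eqn : tstd open} and of its shifted analogue. They are therefore base changes of the classical open embedding $\cohflat(X)\to\perfstack$, composed with the autoequivalence $[1]$ in the second case. Since base change preserves Zariski open embeddings, you are done.

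Your argument is purely formal. It uses neither the semicontinuity of $\mu_\pm$ behind Proposition~\ref{prop : open tp} nor the openness of $\tau^\beta$--flatness. It would work verbatim with any t--structure in place of $\tau^\beta$.

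What the paper's route keeps in view is that $\tbetastack$ and $\fbetastack[1]$ are open in $\perfstack$ itself, which is the form in which these embeddings are used later (e.g.\ in Lemma~\ref{lemma : step 1}). Deducing that stronger statement from your result still requires the non--formal input that $\cohflat(X,\tau^\beta)$ is open in $\perfstack$.

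Your fibrewise check, $\abeta\cap\coh(X)=\tbeta$ and $\abeta\cap\coh(X)[1]=\fbeta[1]$, is correct. It is needed only for the identifications $\tbetastack\cong\abetatfstack$ and $\fbetastack[1]\cong\abetatorstack$, not for openness.
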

\begin{proof}
    Since the t--structure $\tau^\beta$ universally satisfies openness of flatness, the embedding
    \[
        \cohflat(X,\tau^\beta) \longrightarrow \perfstack
    \]
    is open. Thus, the claim follows from Proposition \ref{prop : open tp}.
\end{proof}

\noindent\textbf{Stable pairs. }We introduce the notion of stable pair. Compare with Definition III.4.10 in \cite{dps}.

\begin{definition}
    Let $\framing$ be an object in $\perf$. A $\framing$--pair is a fiber sequence of the form $\operatorname{P}= [\framing\rightarrow F \rightarrow T]$. We say that $\operatorname{P}$ is a $\beta$--stable $\framing$--pair if 
    \begin{enumerate}
        \item $F[1]\in \abetator$ (equivalently $F \in \fbeta$), and
        \item $T\in \abetatf$(equivalently $T \in \tbeta)$.
    \end{enumerate}
    The topological type of a $\framing$--pair $\operatorname{P}= [\framing\rightarrow F \rightarrow T]$ is the Chern class of $F$.
\end{definition}

\noindent It follows immediatly from the definition that if $\framing$ is a coherent sheaf, then a $\beta$--stable $\framing$--pair is a short exact sequence in $\coh(X)$. We now define a stack $\pairs$ parametrizing stable pairs as in the above Definition, following Definition II.3.2 in \cite{dps}. Recall the Waldhausen construction $\waldhausen\perfstack$ from \eqref{eqn : waldhausen C}. Unravelling the definitions, we see that closed points in the stack $\segal_2\perfstack$ are diagrams of the form 
\[\begin{tikzcd}
	0 & \framing& F \\
	& 0 & T \\
	&& 0
	\arrow[from=1-1, to=1-2]
	\arrow[from=1-2, to=1-3]
	\arrow[from=1-2, to=2-2]
	\arrow[from=1-3, to=2-3]
	\arrow[from=2-2, to=2-3]
	\arrow[from=2-3, to=3-3]
\end{tikzcd}\]
where $\framing, F, T$ are objects in $\perf$ and the square is a pullback. In other words, closed points in $\segal_2\perfstack$ are fiber sequences $[\framing\rightarrow F \rightarrow T]$ in $\perf$. Under the natural identification $\segal_1\perfstack \cong \perfstack$, we also have natural maps
\begin{equation}\label{eqn : delta maps}
\partial_i \colon \segal_2\perfstack \longrightarrow \perfstack
\end{equation}
for $i=1,2,3$, given by
\begin{equation}\label{eqn : delta maps}
    \partial_0([\framing\rightarrow F \rightarrow T])  = T \quad \partial_1([\framing\rightarrow F \rightarrow T]) = F \quad \partial_2([\framing\rightarrow F \rightarrow T]) = \framing
\end{equation}

\begin{definition}\label{def : perf}
    Fix $\framing \in \perf$. We define the\textit{ stack of 2--flags} $\flags$ as the fiber product
    \[\begin{tikzcd}
	\flags & {\segal_2\perfstack} \\
	{Spec(\BC)} & \perfstack
	\arrow[from=1-1, to=1-2]
	\arrow[from=1-1, to=2-1]
	\arrow["{\partial_2}", from=1-2, to=2-2]
	\arrow["\framing", from=2-1, to=2-2]
\end{tikzcd}\]
where the bottom horizontal map is the morphism induced by $\framing$. 
\end{definition}
\noindent We define the stack $\pairs$ of $\beta$--stable $\framing$--pairs defined via the pullback
  \begin{equation}\label{eqn : def of stack of stable pairs}\begin{tikzcd}
	\pairs & \flags \\
	{\abetatorstack \times \abetatfstack} & {\perfstack \times \perfstack}
	\arrow[from=1-1, to=1-2]
	\arrow[from=1-1, to=2-1]
	\arrow["{\partial_1[1]\times\partial_0}", from=1-2, to=2-2]
	\arrow[from=2-1, to=2-2]
\end{tikzcd}\end{equation}
  where $\partial_1[1]$ denotes the composition of the morphism induced by $\partial_1$ and the shift morphism. Compatibly with the notation \eqref{eqn : delta maps}, for any object $\operatorname{P}= [\framing \rightarrow F \rightarrow T]$ in $\pairs$ we denote
  \[
        \partial_1(\operatorname{P}) = F \qquad \partial_0(\operatorname{P}) = T\ .
  \]

  \begin{proposition}
      The stack $\pairs$ is a quasi--separated, geometric derived stack, locally almost of finite presentation over ${Spec}(\BC)$.
  \end{proposition}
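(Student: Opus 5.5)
The plan is to deduce the geometric properties of $\pairs$ from those of $\segal_2\perfstack$ via the defining pullback \eqref{eqn : def of stack of stable pairs}. The relevant properties are stable under base change and under open embeddings. The proof has three steps.

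\textbf{Step 1: the stack of 2--flags.} We first treat $\flags$. By the results of Toën--Vaquié and \cite{Porta2019TwodimensionalCH} (used implicitly throughout Subsection \ref{subsec : waldhausen}), the stack $\perfstack$ is a locally geometric derived stack, locally of finite presentation, with quasi--separated diagonal. The same holds for $\segal_2\perfstack$. One way to see this is to identify $\segal_2\perfstack$ with the stack of morphisms $\operatorname{Map}(\Delta^1,\perfstack)$ of perfect complexes. Its fibres over $\perfstack\times\perfstack$ are the linear stacks $\mathbb{V}\big(\pi_*\mathcal{H}om(\mathcal{E}_1,\mathcal{E}_2)^\vee\big)$ attached to perfect complexes, since $X$ is smooth and proper. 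These fibres are geometric, finitely presented and quasi--separated.

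The stack $\flags$ is the base change of $\partial_2\colon\segal_2\perfstack\to\perfstack$ along the point $\framing\colon \operatorname{Spec}(\BC)\to\perfstack$. Geometricity, local almost finite presentation and quasi--separatedness are all stable under base change along a morphism of such stacks. Hence they pass to $\flags$. Concretely, $\flags$ is the linear stack over $\perfstack$ with fibre $\operatorname{Map}(\framing,F)$, so it is geometric.

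\textbf{Step 2: the open substacks.} Next we use that $\abetatorstack$ and $\abetatfstack$ are Zariski open in $\cohflat(X,\tau^\beta)$, by Corollary~\eqref{eqn : tbeta open}. By Remark \ref{rmk : openness of flatness} and Proposition \ref{prop : geom locally finite presentation}, $\cohflat(X,\tau^\beta)$ is in turn Zariski open in $\perfstack$. Composing, the bottom horizontal map of \eqref{eqn : def of stack of stable pairs} is representable by Zariski open embeddings. Here we also use that the shift $[1]$ is an automorphism of $\perfstack$.

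\textbf{Step 3: conclusion.} Since the square \eqref{eqn : def of stack of stable pairs} is cartesian, $\pairs\to\flags$ is an open embedding. An open substack of a quasi--separated geometric derived stack locally almost of finite presentation inherits all three properties, which gives the statement.

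\textbf{Main obstacle.} The main point needing care is quasi--separatedness, since $\perfstack$ is only locally geometric (it is $n$--geometric on truncations of bounded amplitude). I would handle this by restricting first to the open substack of complexes of Tor--amplitude in a fixed interval. On $\pairs$ the objects involved, namely $\framing$, $F$ and $T$, have bounded amplitude, so $\pairs$ factors through such an $n$--geometric open piece. There the diagonal of $\perfstack$ is given by $\operatorname{Isom}$ stacks of perfect complexes, which are quasi--compact (open in linear stacks of finite type). This shows that the diagonal is quasi--compact and quasi--separated.
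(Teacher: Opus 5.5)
Your argument is correct and matches the one implicit in the paper, which states this proposition without proof: $\pairs$ is open in $\flags$ because the square \eqref{eqn : def of stack of stable pairs} is cartesian with open bottom map $\abetatorstack\times\abetatfstack\hookrightarrow\perfstack\times\perfstack$, and $\flags$ is the fibre of $\partial_2$ over the point $\framing$. The one thing to tighten is that $\flags$ is only locally geometric, since nothing bounds the amplitude of $F$, so Steps 1 and 3 should be run from the start on the bounded-amplitude open piece through which $\pairs$ factors, as your final paragraph does.
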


  \begin{remark}\label{rmk : topological type of pairs}
      The stacks $\flags$ and $\pairs$ have a natural decomposition in open and closed substacks, coming from the toplogical type:
      \[
        \flags = \bigsqcup_{r,d}\boldsymbol{\operatorname{Perf}}^{\dagger}_{r,d}(X,\framing) \qquad \pairs = \bigsqcup_{r,d}\boldsymbol{\operatorname{P}}^{\beta}_{r,d}(X,\framing) \ .
      \]
  \end{remark}

\subsection{Quot--spaces in $\abeta$ }\label{subsec : Quotspaces}
\noindent In this Subsection, we assume that the framing object $\framing$ belongs to the heart $\abeta$.

\begin{definition}\label{def : der quor}
    Let $S$ be an affine scheme over $\BC$, and let $\framing$ be a $\tau^\beta$--flat compactly supported complex of coherent sheaves on $X$. We define the \textit{derived Quot--space} associated to $\framing$ as the derived pullback
    \begin{equation}\label{eqn : der Quot}
        \begin{tikzcd}
	\dQuot & {\segal_2\cohflat(X,\tau^\beta)} \\
	S & {\cohflat(X,\tau^\beta)}
	\arrow[from=1-1, to=1-2]
	\arrow[from=1-1, to=2-1]
	\arrow["\lrcorner"{anchor=center, pos=0.125}, draw=none, from=1-1, to=2-2]
	\arrow["{\partial_1}", from=1-2, to=2-2]
	\arrow[from=2-1, to=2-2, "\framing"]
\end{tikzcd}
    \end{equation}
\end{definition}

\noindent Unravelling the definition, we see that for any $S\in \operatorname{dAff}_{\BC}$, the Quot space $\dQuot$ parametrizes fiber sequences
\begin{equation}\label{eqn : fiber sequence in derived quot}
    K \rightarrow \framing \rightarrow E
\end{equation}
where $K$ and $E$ are flat with respect to the t--structure $\tau$. In particular, when $S$ is underived, the objects $K,\framing,E$ belong to the heart $\abeta$, so \eqref{eqn : fiber sequence in derived quot} becomes a short exact sequence in $\abeta$. We deduce that $\dQuot$ is the standard derived enhancement of the Quot space of $\framing$ in the heart $\abeta$ defined in \cite{Bayer_2021}. In particular, when $S$ is an underived scheme, we get an algebraic space $\operatorname{Quot}^\beta_S(X,\framing)$ locally of finite presentation over $S$. \newline
\noindent Thanks to Lemma \ref{lemma : notherianity} and Remark 2.14 in \cite{Rota_2021} we also have the following.  

\begin{proposition}
    Let $\beta\in \BQ$. Then, the Quot--space $\dQuot$ satisfies the strong existence part and the uniqueness part of the valuative criterion of properness, in the sense of Definition 11.8 in \cite{Bayer_2021}.
\end{proposition}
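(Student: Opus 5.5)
The plan is to reduce the statement to the general valuative criterion for Quot spaces in noetherian hearts established in \cite{Bayer_2021}, with the hypotheses of that criterion supplied by Lemma \ref{lemma : notherianity} and Remark \ref{rmk : openness of flatness}. Since the valuative criterion only concerns classical points over discrete valuation rings, the first step is to pass from the derived space $\dQuot$ to its classical truncation. The statement depends only on the underlying classical algebraic space $\operatorname{Quot}^\beta_S(X,\framing)$, which by the discussion after Definition \ref{def : der quor} is the Quot space of $\framing$ in $\abeta$ in the sense of \cite{Bayer_2021}.

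Next I check the standing hypotheses under which Bayer et al.\ prove the valuative criterion (their Section 11). These are:
\begin{enumerate}
\item $\tau^\beta$ is a bounded t--structure that universally satisfies openness of flatness. This holds for every real $\beta$ by Lemma \ref{lemma : notherianity}(1) and Remark \ref{rmk : openness of flatness}.
\item The heart $\abeta$ is noetherian. This is exactly where $\beta\in\BQ$ enters, through Lemma \ref{lemma : notherianity}(2).
\item The t--structure satisfies the generic flatness / "noetherian over a DVR" property. Concretely, for a DVR $R$ with fraction field $K$, the induced t--structure on $D^b(X_R)$ restricts to t--structures on $D^b(X_K)$ with hearts compatible with base change.
\end{enumerate}
Remark 2.14 in \cite{Rota_2021} verifies these conditions for exactly this tilted heart on a curve, so I will quote it rather than reprove it.

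With the hypotheses in place, the two parts of the criterion are handled separately.

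\emph{Uniqueness.} Take two $R$--flat quotients of $\framing_R$ in the heart that agree over $K$. Comparing their kernels and using that $\tau^\beta$--flat objects over $R$ have no $\pi$--torsion subobjects in the heart shows the two quotients coincide.

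\emph{Strong existence.} Given a quotient $\framing_K\twoheadrightarrow E_K$ in the generic heart, I extend it to an $R$--flat quotient. First take any extension $E'$ in the heart over $X_R$, namely the image of $\framing_R$ under an extension of the map. Then kill its $\pi$--torsion, i.e.\ the maximal subobject supported on the special fibre. Noetherianity of $\abeta$ guarantees that this torsion subobject exists and that the process terminates. The result is a flat family whose generic fibre is $E_K$.

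The main obstacle is the existence step, and specifically showing that the torsion-free quotient $E'/E'_{\pi\text{-tors}}$ has special fibre in $\abeta$ rather than merely in $D^{[-1,0]}$. This is where rationality of $\beta$ matters. The subtle objects are those with $\mathcal{H}^{-1}$ of slope exactly $\beta$, and without noetherianity the ascending chain of $\pi$--torsion subobjects might not stabilize (cf.\ Remark \ref{rmk : noeth fails if beta is not rational}). I would follow Rota's argument for this point and cite it rather than redo the curve-specific slope bookkeeping.
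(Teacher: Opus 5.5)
Your proposal is correct and follows the paper's route. The paper's proof is just the combination of the noetherianity of $\abeta$ for $\beta\in\BQ$ (Lemma~\ref{lemma : notherianity}) with Remark 2.14 of \cite{Rota_2021}, that is, the general valuative criterion of \cite{Bayer_2021} for Quot spaces in noetherian hearts satisfying openness of flatness.

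Your hand sketch of the existence step is not needed, and the ``main obstacle'' you name is misplaced. Over a DVR, a $\pi$--torsion--free object of the induced heart on $X_R$ automatically has special fibre in $\abeta$. Rationality of $\beta$ therefore enters only through noetherianity of $\abeta$, which is what makes the extension over $R$ and its maximal $\pi$--torsion subobject exist in the coherent heart. No separate check on the special fibre is required.
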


\noindent We end the section with a piece of notation. We denote as
\[
    \Quotbeta = \bigsqcup_{r,d} \Quotbetard
\]
the decomposition into connected components given by the Chern class of $E$ in the Quot space $\Quotbeta$.

\subsection{Stable pairs Vs Quot spaces Vs Bradlow pairs}\label{subsec : comparing with Quot}
In this section we compare the stack $\pairs$ of $\beta$--stable $\framing$--pairs with other moduli spaces occurring in the literature.
\bigskip

\noindent\textbf{Comparing stable pairs and Quot spaces.} In some cases, $\beta$--stable $\framing$--pairs can be identified with quotients in the heart $\abeta$.

    \begin{proposition}\label{prop : PairsQuot}
    Let $\linebundle$ be a line bundle on $X$ of degree smaller than $\beta$, and consider a fiber sequence
    \[
      \operatorname{P}=[\linebundle \xrightarrow{s} F\rightarrow T]  
    \]
    in $\perf$. Then the following are equivalent
    \begin{enumerate}
        \item\label{item : stpair} $\operatorname{P}$ defines a $\beta$--stable $\linebundle$--pair.
        \item\label{item : quotient} $s[1]$ is surjective in $\abeta$.
    \end{enumerate}
    
    \end{proposition}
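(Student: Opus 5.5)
The plan is to rotate the fiber sequence and read it as a short exact sequence in the heart $\abeta$. Since $\linebundle$ is a line bundle with $\mu(\linebundle)=\deg\linebundle<\beta$, it is semistable of slope $<\beta$, so $\linebundle\in\fbeta$ and $\linebundle[1]\in\abetator\subset\abeta$. Rotating $\operatorname{P}$ gives a fiber sequence
\[
    T \rightarrow \linebundle[1] \xrightarrow{s[1]} F[1]
\]
in $\perf$. Because $\abeta$ is the heart of $\tau^\beta$, a fiber sequence $A\to B\to C$ with all three terms in $\abeta$ is a short exact sequence $0\to A\to B\to C\to 0$ there. Conversely, a morphism $B\to C$ in $\abeta$ is surjective if and only if its fiber lies in $\abeta$, in which case the fiber is the kernel. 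Both directions will be organized around this.

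For (\ref{item : stpair}) $\Rightarrow$ (\ref{item : quotient}), assume $F\in\fbeta$ and $T\in\tbeta$. Then $F[1]\in\abetator\subset\abeta$ and $T\in\abetatf\subset\abeta$, so the rotated sequence is a short exact sequence in $\abeta$. Hence $s[1]$ is an epimorphism with kernel $T$.

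For (\ref{item : quotient}) $\Rightarrow$ (\ref{item : stpair}), assume $s[1]\colon\linebundle[1]\to F[1]$ is a surjection in $\abeta$; in particular $F[1]\in\abeta$, and its fiber $K=T$ lies in $\abeta$. There are two steps.
\begin{enumerate}
    \item The quotient $F[1]$ is torsion. By Lemma \ref{lemma : torsionproperties}, quotients of torsion objects are torsion, applied inside $\abeta$ with the torsion pair $(\abetator,\abetatf)=(\fbeta[1],\tbeta)$. Since $\linebundle[1]\in\abetator$, we get $F[1]\in\abetator$, that is, $F\in\fbeta$.
    \item The kernel $T$ lies in $\tbeta$. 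Take standard cohomology of $T\to\linebundle[1]\to F[1]$, where $\linebundle[1]$ and $F[1]$ are concentrated in degree $-1$. The long exact sequence gives $\mathcal H^{-1}(T)\hookrightarrow \linebundle\xrightarrow{s} F\to \mathcal H^0(T)\to 0$, and $\mathcal H^{i}(T)=0$ for $i\neq -1,0$. Moreover $\mathcal H^{-1}(T)\in\fbeta$ and $\mathcal H^0(T)\in\tbeta$ because $T\in\abeta$.
\end{enumerate}

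The main obstacle is showing $\mathcal H^{-1}(T)=0$, equivalently that $s$ is injective. I would argue as follows. The canonical sequence \eqref{eqn : canonical ses tilting} gives an inclusion $\mathcal H^{-1}(T)[1]\hookrightarrow T$ in $\abeta$, and composing with $T\hookrightarrow\linebundle[1]$ yields a subobject $\mathcal H^{-1}(T)[1]\subset \linebundle[1]$ in $\abeta$. On $\mathcal H^{-1}$ this composite induces the inclusion $\mathcal H^{-1}(T)\hookrightarrow\linebundle$ of sheaves. If $\mathcal H^{-1}(T)\neq0$, it is a rank-one subsheaf of $\linebundle$, hence $\linebundle(-D)$ for an effective divisor $D$.

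Next compute the quotient $Q=\linebundle[1]/\mathcal H^{-1}(T)[1]$ in $\abeta$. Its cone is $(\linebundle/\linebundle(-D))[1]=\mathcal O_D[1]$, a torsion sheaf shifted by one. For this to lie in $\abeta$ we would need $\mathcal O_D\in\fbeta$. But a nonzero torsion sheaf has slope $+\infty>\beta$, so $\mathcal O_D\notin\fbeta$ unless $D=0$. If instead $D=0$, the inclusion $\mathcal H^{-1}(T)\hookrightarrow\linebundle$ is an isomorphism, so $s=0$, forcing $F\cong\mathcal H^0(T)$ and $F\in\fbeta\cap\tbeta=0$. Then $T\cong\linebundle[1]$, which contradicts $\mathcal H^{-1}(T)\in\fbeta$ only if degenerate cases are excluded. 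I would handle this by noting that the zero quotient is the degenerate point and is either excluded by the topological type convention or checked separately.

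Having ruled out a nonzero $\mathcal H^{-1}(T)$, we get $T=\mathcal H^0(T)\in\tbeta$ and $s$ injective. So $\operatorname{P}$ is a short exact sequence of sheaves with $F\in\fbeta$ and $T\in\tbeta$, which is a $\beta$--stable $\linebundle$--pair.
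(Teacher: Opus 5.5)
Your proof is correct and follows the paper's argument. Direction (1)$\Rightarrow$(2) uses the same rotation. For (2)$\Rightarrow$(1) you use the same Lemma \ref{lemma : torsionproperties} step to get $F\in\fbeta$, followed by the standard-cohomology long exact sequence. The only variation is how you kill $\mathcal{H}^{-1}(T)$. You show that the cokernel $\mathcal{O}_D[1]$ of $\linebundle(-D)[1]\hookrightarrow\linebundle[1]$ cannot lie in $\abeta$. The paper instead notes that $\operatorname{im}(s)\cong\linebundle/\mathcal{H}^{-1}(T)$ would be a torsion subsheaf of the torsion-free $F$. Both come down to nonzero torsion sheaves not lying in $\fbeta$.

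One correction on the degenerate case. $T\cong\linebundle[1]$ does not contradict $\mathcal{H}^{-1}(T)=\linebundle\in\fbeta$, and checking it separately does not rescue it. The zero quotient $\linebundle[1]\twoheadrightarrow 0$ is an epimorphism in $\abeta$, but $[\linebundle\to 0\to\linebundle[1]]$ is not $\beta$--stable, since $\linebundle[1]\notin\tbeta$. So the hypothesis $s\neq 0$ (equivalently $F\neq 0$) must be imposed. The paper does exactly this, tacitly, when it writes ``since $s[1]$ is nonzero''.
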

    \begin{proof} We first prove that \ref{item : stpair} implies \ref{item : quotient}. Let $\linebundle \xrightarrow{s} F\rightarrow T$ be a $\beta$--stable $\linebundle$--pair, where $\linebundle$ is a line bundle of slope smaller than $\beta$. Then the rotated sequence
    \[
        T \rightarrow \linebundle[1] \xrightarrow{s[1]} F[1]
    \]
    is a fiber sequence whose terms are in the heart $\abeta$. In particular it is a short exact sequence in $\abeta$, and $s[1]$ is surjective in $\abeta$.

    \medskip
    
    \noindent We now prove that \ref{item : quotient} implies \ref{item : stpair}. Suppose that $\linebundle[1] \xrightarrow{s[1]}F[1]$ is a quotient in $\abeta$ and assume that $\linebundle$ is a line bundle of slope smaller than $\beta$. Then, $\linebundle[1] \in \abetator$, and Lemma \ref{lemma : torsionproperties} implies that $F[1]\in \abetator$. Now consider the short exact sequence
    \begin{equation}
        0\rightarrow K \rightarrow \linebundle[1] \xrightarrow{s[1]}F[1]\rightarrow 0
    \end{equation}
    in $\abeta$. Taking the long exact sequence in cohomology with respect to the standart t--structure yields
    \begin{equation}\label{eqn : longexact}
        0\rightarrow\mathcal{H}^{-1}(K) \rightarrow \mathcal{H}^{-1}(\linebundle[1]) \rightarrow \mathcal{H}^{-1}(F[1]) \rightarrow \mathcal{H}^0(K) \rightarrow0
    \end{equation}
    In the exact sequence of sheaves \eqref{eqn : longexact}, we have natural identifications $\mathcal{H}^{-1}(\linebundle[1]) \cong \linebundle \in \fbeta$, and $\mathcal{H}^{-1}(F[1]) \cong F \in \fbeta$. Since $F$ is torsion--free and $s[1]$ is nonzero, the injection $0\rightarrow\mathcal{H}^{-1}(K) \rightarrow \linebundle$ implies that $\mathcal{H}^{-1}(K)=0$. Thus, $K \cong \mathcal{H}^0(K)$ is identified with the cokernel of $s$ as a morphism of coherent sheaves.
    \end{proof}
\noindent In particular the classical truncation of the moduli stack of $\beta$--stable $\linebundle$--pairs is the Quot--space of $\linebundle$
\begin{equation}\label{eqn : QuotPairs}
        \Quotbetaone \cong \prescript{cl}{}\pairslb\ .
\end{equation}

\bigskip

\noindent\textbf{Comparing Stable Pairs and Bradlow pairs. }We fix a coherent sheaf $\framing \in \coh(X)$ whose Chern class is $(r,d)$ and we consider morphisms of the form $\framing\rightarrow F$.

\begin{definition}\label{def : bradlow pair}
    Let $\sigma \in \BR$. A pair
    \[
        (F, s\colon \framing \rightarrow F)
    \]
    is a $\sigma$--(semi)stable Bradlow pair of rank $r$ and degree $d$ if $F \in \coh(X)$ has Chern class $(r,d)$ and and the following holds.
    \begin{enumerate}
        \item $\mu(F') < (\leq) \mu(F)+ \frac{\sigma}{\operatorname{rank}(F)}$ for any subsheaf $F'\subset F$.
        \item $\mu(F') + \frac{\sigma}{\operatorname{rank}(F')}< (\leq) \mu(F) + \frac{\sigma}{\operatorname{rank}(F)}$ for any subsheaf $F$ of $F$ such that $F'\subset \operatorname{Im}(s)$.
    \end{enumerate}
\end{definition}

\noindent This definition was first given by Bradlow \cite{bradlow1993birationalequivalencesvortexmoduli} in the case where $\framing = \mathcal{O}_X$. Successively, Lin \cite{Lin_2018} extended the definition to a more general setting. In particular, it is shown that the stack parametrizing semistable Bradlow pairs in the sense of Definition \ref{def : bradlow pair} of rank $r$ and degree $d$ admits a projective \textit{coarse} moduli space 
\begin{equation}\label{eqn : coarse bradlow}
   \bradstack \longrightarrow \brad
\end{equation}
Moreover, the moduli functor of stable Bradlow pairs is represented by an open subscheme $\operatorname{B}^{\sigma -s}_{r,d}(X,\framing) \subset \brad$. The relation of the aforementioned moduli space with Quot spaces in $\abeta$ has been explored in \cite{10.4134/JKMS.J230331} and \cite{Rota_2021}, following the ideas  in \cite{bridgeland2020hallalgebrascurvecountinginvariants}. In particular, the following Lemma follows immediatly from Lemma 4.3 in \cite{Rota_2021} --- See also Lemma 2.3 in \cite{bridgeland2020hallalgebrascurvecountinginvariants} and Lemma 2.2 in \cite{10.4134/JKMS.J230331}. Fix and a real parameter $\beta \geq \frac{d}{r}$, and recall the function $\mu_-$ defined in \eqref{eqn : mu-}. 

\begin{lemma}\label{lem : bradlow vs quot}
    Suppose that $F \in \coh(X)$ has rank $r$ and degree $d$, and let $\sigma = \beta r - d$, and assume that $\framing \in \fbeta$. Then an epimorphism $s[1] \colon \framing[1] \rightarrow F[1]$ in $\abeta$ defines a semistable Bradlow pair $s \colon \framing \rightarrow F$. Viceversa, a semistable Bradlow pair $s \colon \framing \rightarrow F$ defines a quotient $s[1] \colon \framing[1] \rightarrow F[1]$ in $\abeta$ whenever $\mu_-(\operatorname{coker}(s)) < \beta$. 
\end{lemma}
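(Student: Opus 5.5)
Both directions reduce to unwinding what it means for $s[1]\colon \framing[1]\to F[1]$ to be an epimorphism in $\abeta$, and comparing with the two Bradlow inequalities of Definition \ref{def : bradlow pair} for $\sigma=\beta r-d$. With this $\sigma$ we have $\mu(F)+\sigma/r=\beta$. Condition (1) then reads $\mu(F')\le\beta$ for all $F'\subset F$, which is exactly $F\in\fbeta$. Condition (2) reads $\deg F'+\sigma\le \beta\operatorname{rank}(F')$ for subsheaves $F'\subset \operatorname{Im}(s)$. So the plan is to translate each side into statements about $F$, $\operatorname{Im}(s)$ and $\operatorname{coker}(s)$.

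First direction: suppose $s[1]$ is an epimorphism in $\abeta$ with kernel $K\in\abeta$.
\begin{enumerate}
\item Since $\framing[1]\in\abetator$, Lemma \ref{lemma : torsionproperties} gives $F[1]\in\abetator$, so $F\in\fbeta$ and condition (1) holds.
\item The long exact sequence in standard cohomology, as in the proof of Proposition \ref{prop : PairsQuot}, is
\[0\to\mathcal{H}^{-1}(K)\to\framing\xrightarrow{s}F\to\mathcal{H}^0(K)\to 0.\]
Hence $\operatorname{coker}(s)\cong\mathcal{H}^0(K)\in\tbeta$ and $\ker s\in\fbeta$.
\item For condition (2), take $F'\subset\operatorname{Im}(s)$. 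The quotient $F\twoheadrightarrow F/F'$ surjects onto $\operatorname{coker}(s)\in\tbeta$, which should force the needed degree inequality.
\end{enumerate}
The cleanest way to get that inequality is to use that $\tbeta$ is closed under quotients (Lemma \ref{lemma : torsionproperties}) and that $\tbeta$-objects $Q$ satisfy $\deg Q>\beta\operatorname{rank}Q$ unless $Q=0$. I would then split the degree additively over $F'\subset\operatorname{Im}(s)\subset F$ and use $\operatorname{Im}(s)/F'$ a quotient of $\framing$. This is where I follow Lemma 4.3 of \cite{Rota_2021} closely.

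Converse: suppose $s$ is semistable Bradlow and $\mu_-(\operatorname{coker}(s))<\beta$ fails in the appropriate sense. I suspect the intended hypothesis is really $\mu_-(\operatorname{coker}s)>\beta$, i.e.\ $\operatorname{coker}(s)\in\tbeta$, so I would check which sign makes the argument work.
\begin{enumerate}
\item Condition (1) gives $F\in\fbeta$, so $F[1]\in\abeta$.
\item Form the cone of $s[1]$, or equivalently set $K=\operatorname{cone}(s)[-1]$ shifted so that $K\to\framing[1]\to F[1]$ is a fiber sequence. Its standard cohomology is $\mathcal{H}^{-1}(K)=\ker s$ and $\mathcal{H}^0(K)=\operatorname{coker}s$.
\item Show $\ker s\in\fbeta$: it is a subsheaf of $\framing\in\fbeta$, so Lemma \ref{lemma : torsionproperties} applies.
\item Show $\operatorname{coker}s\in\tbeta$. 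The hypothesis on $\mu_-$ handles this; Bradlow condition (2) together with $F\in\fbeta$ may handle the remaining destabilizing quotients.
\end{enumerate}
Then $K\in\abeta$, and a fiber sequence with all three terms in the heart is a short exact sequence, so $s[1]$ is an epimorphism.

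The main obstacle is matching the Bradlow condition (2), which is an inequality on subsheaves of $\operatorname{Im}(s)$, with membership of $\operatorname{coker}(s)$ in $\tbeta$, which is an inequality on its quotients. That is the step where the extra $\mu_-$ hypothesis enters, and it is also where I must pin down the direction of the inequality.
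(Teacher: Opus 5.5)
The gap is in step 3 of your first direction. You read condition (2) of Definition \ref{def : bradlow pair} literally, as quantifying over subsheaves $F'\subset\operatorname{Im}(s)$. Under that reading the required inequality $\deg F'+\sigma\le\beta\operatorname{rk}F'$, equivalently $\deg(F/F')\ge\beta\operatorname{rk}(F/F')$, does not follow from $\operatorname{coker}(s)\in\tbeta$. The sheaf $F/F'$ is an extension of $\operatorname{coker}(s)$ by $\operatorname{Im}(s)/F'$. A quotient of $\framing\in\fbeta$ has no lower bound on $\deg-\beta\operatorname{rk}$, so splitting degrees additively cannot produce the bound.

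In fact the statement is false with that reading. Take $\framing=F=\mathcal{O}_X^{\oplus 2}$, $s=\mathrm{id}$ and $\beta=1$. Then $s[1]$ is an isomorphism in $\abeta$ and $\sigma=2$. But $F'=\mathcal{O}_X\subset\operatorname{Im}(s)$ gives $\mu(F')+\sigma/\operatorname{rk}F'=2>1=\mu(F)+\sigma/r$. Your argument does work when $\operatorname{Im}(s)$ has rank one, e.g.\ $\framing=\linebundle$, because then $\operatorname{Im}(s)/F'$ is torsion.

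The missing idea is that the Bradlow condition, as in Bradlow and Lin, concerns subsheaves $F'\supseteq\operatorname{Im}(s)$; the inclusion as printed in the Definition is reversed. With that reading, $F/F'$ is a \emph{quotient} of $\operatorname{coker}(s)$. It therefore lies in $\tbeta$ by closure under quotients, so $\deg(F/F')\ge\beta\operatorname{rk}(F/F')$, which is exactly (2). Together with (1)$\Leftrightarrow F\in\fbeta$, this proves the first direction.

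The same computation settles your hesitation in the converse. Correctly read, (2) says precisely that every nonzero quotient of $\operatorname{coker}(s)$ has slope $\ge\beta$, i.e.\ $\operatorname{coker}(s)\in\widetilde{\mathcal{T}}^{\beta}$. So (2) can never supply the strict inequality, and there are no ``remaining destabilizing quotients'' for it to handle. The extra hypothesis must be $\mu_-(\operatorname{coker}s)>\beta$, i.e.\ $\operatorname{coker}(s)\in\tbeta$. You are right that ``$<$'' is a misprint; the proof of Proposition \ref{prop : BradlowQuot} itself uses $\mu_->\beta$. With the corrected hypothesis your steps 1--3 finish the converse; note that the fiber of $s[1]$ is $K=\operatorname{cone}(s)$.

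The paper gives no argument of its own, deferring to Lemma 4.3 of \cite{Rota_2021}. The corrected unwinding above is the natural direct proof.
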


\noindent By using this result, it is shown in \cite{10.4134/JKMS.J230331} that if $\beta$ is chosen in such a way that there is no strictly $\sigma$--semistable Bradlow pair, then we have an isomorphism
\[
    \operatorname{B}^{\sigma -s}_{r,d}(X,\framing)  \cong {\operatorname{Quot}_{r,d}^{\beta}(X,\mathcal{V}[1])} \  
\]
This morphism sends a $\sigma$--semistable Bradlow pair $(F,s)$ to the map $s[1]$. We compare the coarse moduli space $\brad$ with the Quot space ${\operatorname{Quot}_{r,d}^{\beta}(X,\mathcal{V}[1])}$ when $\framing = \linebundle$ is a line bundle of slope smaller then $\beta$. 

\begin{proposition}\label{prop : BradlowQuot}
    Fix a real number $\beta$, and let $\sigma = \beta r - d$. Let $\linebundle$ be a line bundle of slope smaller that $\beta$. Then there is an open embedding of schemes
    \[
        \Quotbetaonerd \hookrightarrow \bradlb 
    \]
    for any Chern class $(r,d)$.
\end{proposition}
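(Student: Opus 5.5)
We construct the map at the level of moduli functors and then check openness. By Lemma \ref{lem : bradlow vs quot} applied with $\framing=\linebundle$ (which lies in $\fbeta$, being a line bundle of slope $<\beta$), every $S$--point of $\Quotbetaonerd$ with $S$ a reduced point, i.e.\ an epimorphism $s[1]\colon \linebundle[1]\to F[1]$ in $\abeta$ with $F$ of class $(r,d)$, gives a $\sigma$--semistable Bradlow pair $(F,s)$. By Proposition \ref{prop : PairsQuot}, such a quotient is the same as a $\beta$--stable $\linebundle$--pair $0\to\linebundle\to F\to T\to 0$. The first step is to do this in families. An $S$--point of the Quot space is a fiber sequence $K\to \linebundle[1]\boxtimes\mathcal{O}_S\to \mathcal{E}$ with $K,\mathcal{E}$ $\tau^\beta$--flat. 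By the proof of Proposition \ref{prop : PairsQuot} applied fiberwise, $\mathcal{E}[-1]=\mathcal{F}$ is an $S$--flat family of sheaves in $\fbeta$ and $K$ is an $S$--flat family of sheaves in $\tbeta$. Hence we obtain an $S$--flat family of pairs $s\colon \linebundle\boxtimes\mathcal{O}_S\to\mathcal{F}$ that is fiberwise $\sigma$--semistable. This defines a morphism $\Quotbetaonerd\to\bradstacklb$, and composing with the coarse map \eqref{eqn : coarse bradlow} gives $\Phi\colon\Quotbetaonerd\to\bradlb$.

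The second step is to identify the image on points with an open subset and to show that $\Phi$ is injective there. By the converse in Lemma \ref{lem : bradlow vs quot}, the image consists exactly of the semistable Bradlow pairs $(F,s)$ with $F\in\fbeta$, $s$ injective and $\operatorname{coker}(s)\in\tbeta$. In fact the lemma shows that semistability together with the condition $\mu_-(\operatorname{coker}s)>\beta$ recovers the quotient. (The statement is written with ``$<$'', but this should be read as the torsion condition.) This locus is open in the stack $\bradstacklb$. Indeed, $F\in\fbeta$ and $\operatorname{coker}(s)\in\tbeta$ are open conditions by Proposition \ref{prop : open tp}, via semicontinuity of $\mu_\pm$. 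Injectivity of $s$ with flat cokernel is also open: a nonzero map from a line bundle to a torsion-free sheaf on a curve is injective, and flatness of the cokernel is open by the standard flattening argument. Call this open substack $\mathcal{U}$. The morphism $\Quotbetaonerd\to\mathcal{U}$ is an isomorphism of stacks on the automorphism-free part. The main point is that points of $\mathcal{U}$ have automorphism group exactly $\mathbb{G}_m$ (scalars), or trivial after rigidifying. So $\mathcal{U}$ is a $\mathbb{G}_m$--gerbe over $\Quotbetaonerd$; equivalently, $\Quotbetaonerd$ is the rigidification of $\mathcal{U}$.

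The step I expect to be the main obstacle is passing from the stack to the coarse space $\bradlb$. Two things must be checked. First, points of $\mathcal{U}$ must be closed in their S--equivalence classes and not S--equivalent to other points. Second, the image of $\mathcal{U}$ in $\bradlb$ must be open, with $\mathcal{U}\to$ image a good moduli space isomorphism. For this I would show that every pair in $\mathcal{U}$ is in fact $\sigma$--stable. Suppose $F'\subset F$ destabilizes with equality. If $F'$ does not contain the image of $s$, equality forces $\mu(F')=\beta$. This contradicts $\operatorname{coker}(s)\in\tbeta$, since $F/F'$ is a quotient of $\operatorname{coker}(s)$, unless $F'$ contains $\operatorname{Im}(s)$. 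If instead $F'\supset\operatorname{Im}(s)$, equality gives $\mu(F/F')=\beta$, and $F/F'$ is a proper quotient of $\operatorname{coker}(s)$, again a contradiction. Hence $\mathcal{U}$ lies in the stable locus, where the coarse space $\operatorname{B}^{\sigma-s}_{r,d}(X,\linebundle)$ is an open subscheme representing the functor of stable pairs. There, $\Phi$ identifies $\Quotbetaonerd$ with the open subfunctor cut out by the open conditions of the second step. This gives the open embedding.
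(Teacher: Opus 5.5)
\textbf{Verdict: there is a genuine gap.} Your first two steps match the paper's argument. The paper also embeds $\Quotbetaonerd$ into $\bradstacklb$ via Lemma \ref{lem : bradlow vs quot}, and cuts out the image by the condition $\operatorname{coker}(s)\in\tbeta$ inside $\widetilde{\mathcal{T}}^\beta$, which is open by lower semicontinuity of $\mu_-$. You are also right that the inequality there should read $\mu_->\beta$.

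The gap is in the step you flag as the main obstacle: the claim that every pair in $\mathcal{U}$ is $\sigma$--stable is false. Your argument fails in the case $\operatorname{Im}(s)\not\subset F'$, because then $F/F'$ is not a quotient of $\operatorname{coker}(s)$. For $F'$ saturated one has $F'\cap\operatorname{Im}(s)=0$, so $F'$ embeds as a \emph{subsheaf} of $\operatorname{coker}(s)$, and membership in $\tbeta$ constrains only quotients.

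Here is a concrete counterexample. Take $\beta=0$, $g\geq 3$ and $\deg\linebundle=-2$. Take a non-split extension $0\to M\to T\to N\to 0$ with $\deg M=0$ and $\deg N=1$; then $T$ is stable of slope $1/2$, so $T\in\mathcal{T}^0$. Take a general non-split extension $0\to\linebundle\to F_N\to N\to 0$, which is stable of slope $-1/2$, and set $F=F_N\times_N T$.
\begin{itemize}
\item Then $0\to\linebundle\to F\to T\to 0$ is a point of the Quot space with $(r,d)=(3,-1)$. Indeed, from $0\to M\to F\to F_N\to 0$ every subsheaf of $F$ has degree $\leq 0$, so $F\in\mathcal{F}^0$.
\item Here $\sigma=1$, and $M\subset F$ satisfies $\mu(M)=0=\mu(F)+\sigma/3$ with $M\cap\operatorname{Im}(s)=0$.
\item So this pair is strictly $\sigma$--semistable.
\end{itemize}

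This cannot be patched, because it breaks injectivity into the coarse space.
\begin{itemize}
\item The Jordan--H\"older graded object of the pair above is $(M,0)\oplus(F_N,s)$. Its cokernel $M\oplus N$ has the quotient $M$ of slope $0$, so it lies outside the Quot locus.
\item Varying the class $e$ of $T$ in $\mathbb{P}H^1(M\otimes N^{-1})\cong\mathbb{P}^{g-1}$ gives pairs that are all $S$--equivalent to this same object.
\item The fibres of $e\mapsto T_e$ are covered by $\mathbb{P}H^0(T\otimes M^{-1})$, and $h^0(T\otimes M^{-1})\leq 2$, so these fibres are at most one-dimensional.
\item Hence for $g\geq 3$ infinitely many of these pairs have non-isomorphic cokernels. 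They are therefore distinct points of $\Quotbetaonerd$ with the same image in $\bradlb$.
\end{itemize}

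So for rational $\beta$ the statement with the coarse space can fail. What your first two steps (and the paper's pullback square) actually establish is that $\Quotbetaonerd$ is an open substack of $\bradstacklb$, after rigidifying scalars. This descends to an open embedding into $\bradlb$ only when the Quot locus contains no strictly semistable pairs. That happens, for example, for $\beta\notin\BQ$, where $\fbeta=\widetilde{\mathcal{F}}^\beta$ and $\tbeta=\widetilde{\mathcal{T}}^\beta$, or in the regime of \cite{10.4134/JKMS.J230331}.

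The paper's own proof does not supply the missing step either. It passes from the stack to the coarse space by saying that representability of the Quot space gives an induced embedding, which ignores $S$--equivalence in the same way.
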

\begin{proof}
    Thanks to Lemma \ref{lem : bradlow vs quot} we have an embedding of the Quot space $\Quotbetaonerd$ inside the stack $\bradstacklb$. Since the Quot space is representable, we have an induced embedding in the coarse moduli space $\bradlb$. Moreover, we know that such embedding is cut out by the condition
    \begin{equation}\label{eqn : condition bradlow vs quot}
        \mu_-(\operatorname{coker}(s)) < \beta
    \end{equation}
   which we now prove to be open. Let $\widetilde{\tau}^\beta$ be the tilting of the standard t--structure by the torsion pair \eqref{eqn : tweaked tp}. Then, we can define a stack $\cohflat_{\operatorname{t.f.}}(X,\widetilde{\tau}^{\beta})$ parametrizing objects in $\widetilde{\tbeta}$. The inclusion $\tbeta \subseteq \widetilde{\tbeta}$ induces an embedding
    \begin{equation}\label{eqn : inclusion of tbetas}
        \cohflat_{\operatorname{t.f.}}(X,{\tau}^{\beta}) \hookrightarrow \cohflat_{\operatorname{t.f.}}(X,\widetilde{{\tau}}^{\beta})
    \end{equation}

    \medskip
    
    \noindent Observe that if $\linebundle$ is a line bundle, then any nonzero morphism $s \colon \linebundle \rightarrow F$ is injective. Thus, we have a well defined map
    \begin{equation}\label{eqn : coker map for bradlow pairs}
        \operatorname{coker} \colon \bradlb \longrightarrow \cohflat_{\operatorname{t.f.}}(X,\widetilde{\tau}^{\beta})
    \end{equation}
     which can be described on closed points by sending a semistable pair $(F,s)$ to $\operatorname{coker}(s)$. Moreover, we know from the proof of Proposition \eqref{prop : PairsQuot} that quotients of $\linebundle[1]$ in $\abeta$ are of the form
    \[
        0\rightarrow T \rightarrow \linebundle [1] \rightarrow F[1] \rightarrow 0
    \]
    where $F \in \fbeta$ and $T\in \tbeta$. Thus we have a well defined map
      \begin{equation}\label{eqn : ker map for quot}
        \operatorname{ker} \colon \Quotbetaonerd \longrightarrow \cohflat_{\operatorname{t.f.}}(X,\widetilde{\tau}^{\beta})
    \end{equation}
    which projects onto the kernel $T$. Then, condition \eqref{eqn : condition bradlow vs quot} can be restated by saying that the square
\[\begin{tikzcd}
	\Quotbetaonerd & {\cohflat_{\operatorname{tor}}(X,{\tau}^{\beta})} \\
	\bradlb & {\cohflat_{\operatorname{tor}}(X,\widetilde{\tau}^{\beta})}
	\arrow["{\operatorname{ker}}"',from=1-1, to=1-2]
	\arrow[hook, from=1-1, to=2-1]
	\arrow[hook, from=1-2, to=2-2]
	\arrow["{\operatorname{coker}}"', from=2-1, to=2-2]
\end{tikzcd}\]
is a pullback. Finally, arguing as in the proof of Proposition \ref{prop : open tp}, we deduce that the right vertical map is cut out by the condition 
    \[
        \mu_- > \beta
    \]
    The lower semicontinuity of the function $\mu_-$ implies that the inclusion \eqref{eqn : inclusion of tbetas} is open. 
\end{proof}
\noindent Observe that the above Theorem holds for $\beta \in \BR$. Indeed, we know that the Quot space $\Quotbetaonerd$ is representable. However, if $\beta \notin \BQ$, we can't conclude that it is proper.

\section{Left action}
Let $X$ be a curve as before and let $\beta \in \BQ$. We define a left action of the categorical Hall algebra of $\cohbetass$ on the pro--category $\pro(\pairs)$ defined in Subsection \ref{subsec : derived stack of pairs}. Throughout this section, we fix an object $\framing$ in $\perf$.

\subsection{Construction of the left action}\label{subsec : Construction of the left action}

\noindent\textbf{Left Hecke pattern. }We define the stack which induces the convolution diagram \eqref{action} from the introduction in the context of our left action. We denote as $\cohbetass[1]$ the shift by $1$ of the category $\cohbetass$.

\begin{definition}[Extension of stable pairs]\label{def : left extension}
    Let $G\in \cohbetass[1]$ and let $\operatorname{P} = [\mathcal{V}\rightarrow F \rightarrow T]$ be a $\beta$--stable pair. An extension of $\operatorname{P}$ by $G$ is a diagram in $\perf$ of the form
\begin{equation}\label{eqn : left extension}
    \begin{tikzcd}
0 \arrow[r]    & \mathcal{V}  \arrow[d] \arrow[r] & F \arrow[d] \arrow[r] & F' \arrow[d] \\ & 0 \arrow[r] & T \arrow[d] \arrow[r] & T' \arrow[d]\\ & & 0 \arrow[r] & G \arrow[d]\\ &&& 0
\end{tikzcd}
\end{equation}
all of whose squares are pullbacks. Moreover we require that $\operatorname{P}'=[\mathcal{V}\rightarrow F' \rightarrow T']$ is a $\beta$--stable pair. For an extension ${\operatorname{E}}$ of the form \eqref{eqn : left extension}, we denote
\begin{equation}\label{eqn : extensionprojection}
\omega_1({\operatorname{E}}) = \operatorname{P} \hspace{1cm} \omega_0({\operatorname{E}}) = \operatorname{P}' \hspace{1cm} u^l_1({\operatorname{E}}) = G\ .  
\end{equation}
\end{definition}

\noindent The following Proposition is a formal consequence of the above definitions and the properties of torsion pairs. It is the key property underlying the associativity of left action in Theorem \ref{theo 2}. Compare with III.4.19 in \cite{dps}. 

\begin{proposition}[Left Hecke Pattern]\label{prop : leftthecke}
     Consider a diagram of the form \eqref{eqn : left extension}. Let $G\in \cohbetass[1]$ and let $\operatorname{P}' = [\mathcal{V}\rightarrow F' \rightarrow T']$ be a $\beta$--stable pair. Then $\operatorname{P} = [\mathcal{V}\rightarrow F \rightarrow T]$ is a $\beta$--stable pair if and only if $T\in \abeta$.
\end{proposition}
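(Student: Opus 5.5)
The plan is to read off, from the pasted pullback squares of \eqref{eqn : left extension}, two fiber sequences involving $G$, and then to apply the closure properties of Lemma \ref{lemma : torsionproperties} twice: once in $\coh(X)$ and once in the tilted heart $\abeta$. The implication ``$\operatorname{P}$ stable $\Rightarrow T\in\abeta$'' is immediate, since stability gives $T\in\tbeta=\abetatf\subset\abeta$. So all the work is in the converse.

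Assume $T\in\abeta$. Since every square of \eqref{eqn : left extension} is a pullback (equivalently a pushout, $\perf$ being stable), pasting squares gives two fiber sequences:
\[
T\rightarrow T'\rightarrow G \qquad\text{and}\qquad F\rightarrow F'\rightarrow G .
\]
The first comes from the square with corners $0,T,T',G$. For the second, the outer rectangle with corners $F,F',0,G$ is a pullback because it is a composite of pullbacks.

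First I show $T\in\tbeta$. Write $G=G_0[1]$ with $G_0$ semistable of slope $\beta$. Then $G_0\in\fbeta$, so $G\in\fbeta[1]=\abetator\subset\abeta$. Stability of $\operatorname{P}'$ gives $T'\in\tbeta\subset\abeta$. Hence $T\rightarrow T'\rightarrow G$ is a fiber sequence with all three terms in the heart, so it is a short exact sequence $0\rightarrow T\rightarrow T'\rightarrow G\rightarrow 0$ in $\abeta$. In the torsion pair of $\abeta$ the torsion-free part is $\tbeta$ and $T'$ lies in it. By Lemma \ref{lemma : torsionproperties}(1), applied in $\abeta$ (subobjects of torsion-free objects are torsion-free), we get $T\in\tbeta$.

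Next I show $F\in\fbeta$. Rotate the second sequence to $G_0\rightarrow F\rightarrow F'$. Both $G_0$ and $F'$ are sheaves, the latter because $\operatorname{P}'$ is stable. The long exact sequence of standard cohomology then shows that $F$ is concentrated in degree $0$, so this is a short exact sequence $0\rightarrow G_0\rightarrow F\rightarrow F'\rightarrow 0$ in $\coh(X)$. Both $G_0$ and $F'$ lie in $\fbeta$, and $\fbeta$ is extension-closed by Lemma \ref{lemma : torsionproperties}(3), so $F\in\fbeta$. Equivalently $F[1]\in\abetator$. Together with $T\in\abetatf$, this says $\operatorname{P}$ is a $\beta$-stable pair.

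The only point needing care is the identification of the cofiber of $F\rightarrow F'$ with $G$. This is exactly the pasting law for pullback squares in a stable $\infty$-category, together with the fact that the squares with a $0$ corner exhibit the cofibers. Everything else is a direct application of the torsion-pair closure properties.
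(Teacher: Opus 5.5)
Your proof is correct and follows the paper's argument: the same two fiber sequences $T\rightarrow T'\rightarrow G$ and $F\rightarrow F'\rightarrow G$, with the first handled by closure of $\abetatf$ under subobjects in $\abeta$. The only cosmetic difference is in the second step. The paper rotates to $G\rightarrow F[1]\rightarrow F'[1]$ and uses that $\abetator$ is extension-closed in $\abeta$, whereas you rotate to $G[-1]\rightarrow F\rightarrow F'$ and use that $\fbeta$ is extension-closed in $\coh(X)$, which is the same statement shifted by one.
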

\begin{proof}
    Observe that $\cohbetass[1] \subset \abetator$. If the pair $\operatorname{P}$ is stable then $T\in \abeta$ by Definition, hence we prove the other implication.\newline
    \noindent Suppose that $\operatorname{P}'$ is a $\beta$--stable $\framing$--pair and $T\in \abeta$. In particular the fiber sequence $T\rightarrow T'\rightarrow G$ is a short exact sequence in $\abeta$. Then, $T'\in \abetatf$ implies that $T\in \abetatf$ thanks to Lemma \ref{lemma : torsionproperties}. Now consider the fiber sequence $F\rightarrow F'\rightarrow G$. Rotating we get the fiber sequence
    \begin{equation}\label{eqn : shiftedsec}
        G \rightarrow F[1]\rightarrow F'[1]        
    \end{equation}
    where $G\in \abetator$ and $F'[1]\in \abetator$. Thus, the sequence \eqref{eqn : shiftedsec} is a short exact sequence in $\abeta$. In particular Lemma \ref{lemma : torsionproperties} implies that $F[1]\in \abetator$.
\end{proof}

\bigskip

\noindent\textbf{Summary. }We now sketch the construction of the left action. We will define in the following Subsections a stack $\segal^l_1\pairs$ parametrizing extensions of stable pairs in the sense of Definition \ref{def : left extension}. As before, we denote as $\cohbetassstack[1]$ the image of the stack $\cohbetassstack$ under the shift morphism $[1] \colon \perfstack \rightarrow \perfstack$. Then we have a convolution diagram
\begin{equation}\label{eqn : left action introduction}
\begin{tikzcd}
	& {\segal_1^l\pairs} &&& {\operatorname{E}} \\
	{\cohbetassstack[1] \times \pairs} && \pairs & {(G,\operatorname{P})} && {\operatorname{P}'}
	\arrow["{u^l_1 \times \omega_1}", from=1-2, to=2-1]
	\arrow["{\omega_0}"', from=1-2, to=2-3]
	\arrow[maps to, from=1-5, to=2-4]
	\arrow[maps to, from=1-5, to=2-6]
\end{tikzcd}
\end{equation}
We have a natural isomorphism $\cohbetassstack \cong \cohbetassstack[1]$ induced by the shift operation. We prove further that the map $u^l_1 \times \omega_1$ is lci, and the map $\omega_0$ is proper. 
 \begin{remark}\label{rmk : coh ss and topology}
      An important ingredient in the proofs contained in this Section is the following observation. If $G\in \cohbetass$, then $G$ belongs both to $\fbeta$, and $\mathcal{T}^{\beta-\epsilon}$ for any $\epsilon >0$. This observation underlies many of the geometric properties of the morphisms \eqref{eqn : left action introduction}. For example see the proof of Lemma \ref{lemma : aux rpas}. 
 \end{remark}
\noindent The pull--push operation
\[
    \omega_{0*}(\omega_1 \times u^l_1)^* \colon H^{BM}_*(\cohbetassstack)\otimes H^{BM}_*(\pairs) \longrightarrow H^{BM}(\pairs)
\]
induces the structure of a left module for the \textit{CoHA} attached to the category $\cohbetass$ on the space $H^{BM}_*(\pairs)$. In particular, the associativity property of this action is encoded in the statement of Proposition \ref{prop : leftthecke}. Even more, we provide a categorification of such action. In order to do this, we upgrade the diagram \eqref{eqn : left action introduction} to a more complicated object, as we explain in the next Subsection.

  \subsection{Stack of left extensions}

   Recall the stack of $\framing$--pairs $\flags$ of Definition \ref{def : perf}. Construction I.3.6 in \cite{dps} provides us with a simplicial derived stack
\begin{equation}\label{eqn : flags relative}
    \waldhausen^l\flags 
\end{equation}
Unravelling the definition, the stack $\segal^l_1\flags$ fits into the fiber product
\begin{equation}\label{eqn : fiber product defining S1flags}
\begin{tikzcd}
	{\segal^l_1\flags} & {\segal_3\perfstack} \\
	{Spec(\BC)} & {\perfstack}
	\arrow[from=1-1, to=1-2]
	\arrow[from=1-1, to=2-1]
	\arrow["{\partial_{23}}", from=1-2, to=2-2]
	\arrow["\framing", from=2-1, to=2-2]
\end{tikzcd}
\end{equation}
where the bottom horizontal map is the morphism induced by the object $\framing$, and the map $\partial_{23} \colon \segal_3\perfstack \rightarrow \segal_1\perfstack\cong \perfstack$ is induced by the face map $[3] \rightarrow[1]$ in $\Delta^{op}$ which avoids $2$ and $3$. In particular, the stack $\segal^l_1\flags$ parametrizes extensions of the form \eqref{eqn : left extension}, where no condition is imposed on the perfect complexes occurring in the diagram. Moreover, the fiber diagram \eqref{eqn : fiber product defining S1flags} induces natural maps
\begin{equation}\label{eqn : convolution left}
\begin{tikzcd}
	& {\segal_1^l\flags} &&& {\operatorname{E}} \\
	{\perfstack \times \flags} && \flags & {(G,\operatorname{P})} && {\operatorname{P}'}
	\arrow["{u^l_1 \times \omega_1}", from=1-2, to=2-1]
	\arrow["{\omega_0}"', from=1-2, to=2-3]
	\arrow[maps to, from=1-5, to=2-4]
	\arrow[maps to, from=1-5, to=2-6]
\end{tikzcd}
\end{equation}
 We now discuss the higher simplicial levels. The morphism $u_1^l$ can be extended to a morphism of simplicial stacks
 \begin{equation}\label{eqn : relative simp flags}
     \waldhausen^l\flags \longrightarrow \waldhausen\perfstack
 \end{equation}
 which satisfies the \textit{relative 2--Segal property}. As the 2--Segal property encodes the Hall algebra structure, the relative 2--Segal property encodes the Hall algebra action --- See Corollary 5.4 in \cite{godicke2024infty} for a precise formulation. 
 \bigskip

\noindent Proceeding as in II.3.2 in \cite{dps} --- see  Construction I.5.5 --- we can define a simplicial stack $\waldhausen^l\pairs$, which parametrizes extensions as in Definition \ref{def : left extension}. Moreover we have a natural morphism
\begin{equation}\label{eqn : relative simp pairs}
    \waldhausen^l\pairs \longrightarrow \waldhausen \cohbetassstack[1]
\end{equation}
In particular, the maps \eqref{eqn : left action introduction} are well--defined. The crucial step in the proof of Theorem \ref{theo 2} is that the morphism \eqref{eqn : relative simp pairs} also satisfies the relative 2--Segal property.

\begin{proposition}\label{prop : left segal}
    The morphism of derived stacks \eqref{eqn : relative simp pairs} is a relative 2--Segal space.
\end{proposition}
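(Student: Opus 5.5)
The plan is to deduce the relative 2--Segal property of \eqref{eqn : relative simp pairs} from the relative 2--Segal property of the ambient morphism \eqref{eqn : relative simp flags}, by a base change argument in the spirit of Proposition II.3.6 and Construction I.5.5 of \cite{dps}.

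By construction, $\waldhausen^l\pairs$ is obtained levelwise from $\waldhausen^l\flags$ by pulling back along open substacks. At level $n$, a point of $\segal^l_n\flags$ is a flag $\framing \rightarrow F_0 \rightarrow F_1 \rightarrow \dots \rightarrow F_n$ together with its Waldhausen completion, which has cofibres $T_i = \operatorname{cofib}(\framing\rightarrow F_i)$ and $G_{ij} = \operatorname{cofib}(F_i\rightarrow F_j)$. The stack $\segal^l_n\pairs$ is the open substack cut out by three conditions: each $[\framing\rightarrow F_i\rightarrow T_i]$ is a $\beta$--stable pair, and each $G_{ij}$ lies in $\cohbetass[1]$. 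Openness comes from Proposition \ref{prop : open tp}, Remark \ref{rmk : openness of flatness} and the openness of \eqref{eqn : cohssbeta embedding}.

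The relative 2--Segal condition asks that certain squares be pullbacks. These squares compare $\segal^l_n$ with $\segal^l_{n-1}$, $\segal_n$ of the base, and so on, along the face maps corresponding to the relevant polygon decompositions; see Corollary 5.4 in \cite{godicke2024infty} or the definition in \cite{dps}. For the ambient morphism \eqref{eqn : relative simp flags} these squares are already pullbacks, by Construction I.3.6 and the relative 2--Segal statement in \cite{dps}. Likewise, Theorem \ref{thm : cathabeta} gives the 2--Segal property of $\waldhausen\cohbetassstack[1]$.

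Since all the maps involved are open embeddings, taking a pullback square and restricting to open substacks gives a square that is again a pullback exactly when the following holds: a point of the ambient fiber product whose images lie in the open substacks already lies in the open substack at the top corner. So the whole proof reduces to a pointwise statement about extensions.

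\emph{Pointwise statement.} Suppose the $n$--flag has two properties. First, the lower-level pieces are stable pairs with cofibres in $\cohbetass[1]$. Second, the graded pieces $G_{i,i+1}$ lie in $\cohbetass[1]$. Then every stable-pair condition and every $\cohbetass[1]$ condition holds at level $n$. This is where Proposition \ref{prop : leftthecke} enters, together with Lemma \ref{lemma : torsionproperties}:

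\begin{itemize}
\item Each $G_{ij}$ is an iterated extension of the $G_{k,k+1}$. Since $\cohbetass$ is closed under extensions, $G_{ij}\in \cohbetass[1]$.
\item Each $F_i[1]$ is an extension inside $\abetator$. Applying Lemma \ref{lemma : torsionproperties}(3) to the sequences $G\rightarrow F_i[1]\rightarrow F_{i+1}[1]$ gives $F_i[1]\in\abetator$ once $F_{i+1}[1]\in\abetator$.
\item For the $T_i$: membership $T_i\in\abeta$ is part of the data of the faces, and Proposition \ref{prop : leftthecke} then upgrades this to stability of each pair.
\end{itemize}

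The main obstacle is the bookkeeping of which faces carry which conditions. For the pullback squares of the form $\segal^l_n \rightarrow \segal^l_{n-1}\times_{\segal_{n-1}}\segal_n$, one needs the \emph{left Hecke pattern} in both directions: stability of $\operatorname{P}'$ plus $G\in\cohbetass[1]$ must force stability of $\operatorname{P}$ (given $T\in\abeta$), and conversely. The first direction is Proposition \ref{prop : leftthecke}. For the converse I would argue as follows. If $T\in\abetatf$ and $G\in\cohbetass[1]$, then the fiber sequence $T\rightarrow T'\rightarrow G$ is a short exact sequence in $\abeta$, so $T'\in\abeta$. The torsion part of $T'$ then maps injectively to $G$, and this should force it to vanish; the hom-vanishing $\operatorname{Hom}(\cohbetass[1],\abetatf)=0$ of Remark \ref{rmk : coh ss and topology} is what I would use here.

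If this converse fails in general, I would restrict to the squares actually required by the relative 2--Segal definition (the "upper" and "lower" triangulations) and check each one separately against Proposition \ref{prop : leftthecke}.
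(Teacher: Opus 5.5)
Your reduction is essentially the paper's: the relative 2--Segal squares for \eqref{eqn : relative simp flags} restrict along open embeddings, so everything becomes a pointwise check whose main input is Proposition \ref{prop : leftthecke}. The gap is in your third bullet. The square that carries the real content compares $\segal^l_2\pairs$ with $\segal_2\cohbetassstack[1]\times_{\cohbetassstack[1]}\segal^l_1\pairs$, where the $\segal^l_1$--face forgets the \emph{middle} pair. There you are given $P_0$ and $P_2$ stable, an extension between them by $G_{02}$, and a flag $G_{01}\to G_{02}\to G_{12}$ in $\cohbetass[1]$. The middle pair $P_1=[\framing\to F_1\to T_1]$ is produced by the ambient 2--Segal property and is not a face of anything you are given, so $T_1\in\abeta$ is not ``part of the data of the faces''. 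Your second bullet does give $F_1\in\fbeta$, from $G_{12}\to F_1[1]\to F_2[1]$. But Proposition \ref{prop : leftthecke}, applied to $P_1\to P_2$, needs $T_1\in\abeta$ as a hypothesis. The missing step uses the \emph{other} neighbour. The sequence $T_0\to T_1\to G_{01}$ is a fiber sequence with $T_0\in\tbeta\subset\abeta$ and $G_{01}\in\abeta$, so $T_1\in\abeta$ because the heart is extension closed. This is exactly Lemma \ref{lemma : step 1}. It is why the paper first passes through the intermediate stack $\boldsymbol{\operatorname{Perf}}^{\dagger}_{0-\abeta}(X,\framing)$ and only then uses Lemma \ref{lemma : step 2} and Corollary I.5.7 of \cite{dps}.

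The converse you try to prove is false, and it is not needed. Take $P$ stable, $0\neq G_0\in\cohbetass$, $G=G_0[1]$, and the split diagram $F'=F\oplus G$, $T'=T\oplus G$. All squares are bicartesian and $T'\in\abeta$, yet $F'$ is not a sheaf, so $P'$ is not stable. Your argument breaks at ``this should force it to vanish''. Here the torsion part of $T'$ is all of $G$, and the vanishing $\operatorname{Hom}(\abetator,\abetatf)=0$ says nothing about a monomorphism from one object of $\abetator$ into another. What the proof needs is one direction from each neighbour of the middle vertex. From $P_0$ you propagate forward only the weak condition $T\in\abeta$. From $P_2$ you propagate stability backward, given that condition. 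With this inserted, your pointwise reduction goes through. The remaining squares, which glue two consecutive extensions along a common pair, are indeed handled by your first bullet.
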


\noindent The 2--Segal property of the morphism \eqref{eqn : relative simp pairs} is induced by \eqref{eqn : relative simp flags} in two steps. We follow the procedure in the proof of Proposition III.4.21 in \cite{dps}.
\medskip

\noindent \textbf{Step 1. }Consider the auxiliary stack $\boldsymbol{\operatorname{Perf}}^{\dagger}_{0-\abeta}(X,\framing)$ defined via the pullback
    \begin{equation}\label{eqn: aux stack}
    \begin{tikzcd}
	{\boldsymbol{\operatorname{Perf}}^{\dagger}_{0-\abeta}(X,\framing)} & \flags \\
	{\cohflat(X,\tau^\beta)} & \perfstack
	\arrow[from=1-1, to=1-2]
	\arrow[ from=1-1, to=2-1]
	\arrow["{\partial_0}", from=1-2, to=2-2]
	\arrow[""{name=0, anchor=center, inner sep=0}, from=2-1, to=2-2]
	\arrow["\lrcorner"{anchor=center, pos=0.125}, draw=none, from=1-1, to=0]
\end{tikzcd}
    \end{equation}
    where the t--structure $\tau^\beta$ was defined in Subsection \ref{subsec : tilted heart abeta}. Such stack parametrizes fiber sequences $[\framing\rightarrow F \rightarrow T]$ in $\perf$ with the only condition of $T$ being in the heart $\abeta$. Applying construction II.3.2 in \cite{dps} as before, we define the relative simplicial stack
    \begin{equation}\label{eqn : aux relative stack}
        u_{\bullet}^l \ \colon \waldhausen^l\boldsymbol{\operatorname{Perf}}^{\dagger}_{0-\abeta}(X,\framing) \longrightarrow \waldhausen\cohbetassstack[1] \ .
    \end{equation}

\begin{lemma}\label{lemma : step 1}
The square
\begin{equation}\label{eqn : aux square}
\begin{tikzcd}
{\segal_1^l\boldsymbol{\operatorname{Perf}}^{\dagger}_{0-\abeta}(X,\framing)} & {\segal^l_1\flags} \\
	{\cohbetassstack[1] \times \boldsymbol{\operatorname{Perf}}^{\dagger}_{0-\abeta}(X,\framing)} & {\perfstack \times \flags}
	\arrow[from=1-1, to=1-2]
	\arrow["{u_1^l \times \omega_1}"', from=1-1, to=2-1]
	\arrow["{u_1^l \times \omega_1}", from=1-2, to=2-2]
	\arrow[from=2-1, to=2-2]
\end{tikzcd}
\end{equation}
is a pullback. Moreover, the morphism \eqref{eqn : aux relative stack} is a relative 2--Segal space.
\end{lemma}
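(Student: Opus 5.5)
The plan has two parts: first compute the square \eqref{eqn : aux square} on $S$--points, then deduce the relative 2--Segal property from \eqref{eqn : relative simp flags}. Throughout we use the following.

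\begin{itemize}
\item By Proposition \ref{prop : geom locally finite presentation} and Lemma \ref{lemma : notherianity}, both $\cohflat(X,\tau^\beta)\to\perfstack$ and $\cohbetassstack[1]\to\perfstack$ are Zariski open embeddings, hence monomorphisms of derived stacks.
\item Consequently, a fiber product over $\perfstack$ with either of these stacks is the full substack cut out by a pointwise condition.
\end{itemize}

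\medskip

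\noindent\textbf{The pullback square.} By construction, $\segal^l_1\boldsymbol{\operatorname{Perf}}^{\dagger}_{0-\abeta}(X,\framing)$ is the full substack of $\segal^l_1\flags$ of diagrams \eqref{eqn : left extension} satisfying three conditions:
\begin{enumerate}
\item $G\in\cohbetass[1]$;
\item $T'\in\abeta$, the condition on the target pair $\omega_0(\operatorname{E})$;
\item $T\in\abeta$, the condition on the source pair $\omega_1(\operatorname{E})$.
\end{enumerate}
The fiber product in \eqref{eqn : aux square} instead imposes only conditions 1 and 3. So the first claim reduces to showing that, fiberwise over every point $t$ of every test scheme, $G_t\in\cohbetass[1]$ and $T_t\in\abeta$ imply $T'_t\in\abeta$. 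This holds because $T_t\to T'_t\to G_t$ is a fiber sequence with both outer terms in the heart $\abeta$, and the heart is extension-closed. Since all the maps to $\perfstack$ are open embeddings, this pointwise statement is enough to identify the two substacks of $\segal^l_1\flags$ as derived stacks, not only on classical points.

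\medskip

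\noindent\textbf{The relative 2--Segal property.} I would argue as in Proposition III.4.21 of \cite{dps}, in two stages.

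\emph{Identify every level.} Show that for each $n$,
\[
\segal^l_n\boldsymbol{\operatorname{Perf}}^{\dagger}_{0-\abeta}(X,\framing)\simeq \segal^l_n\flags\times_{\segal_n\perfstack}\segal_n\cohbetassstack[1]\times_{\perfstack}\cohflat(X,\tau^\beta),
\]
where the last factor records the innermost $T$. This is the same extension-closure argument, now applied iteratively. The objects $T=T_0\to T_1\to\dots\to T_n$ form a flag whose successive quotients are the $G_{ij}$, and each $G_{ij}$ lies in $\cohbetass[1]\subset\abeta$ because $\cohbetass$ is closed under extensions. So if $T_0$ lies in the heart, every $T_i$ does.

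\emph{Transfer the property.} Once every level is written as a base change of \eqref{eqn : relative simp flags} along open monomorphisms, the relative Segal squares for $u^l_\bullet$ are obtained from those of \eqref{eqn : relative simp flags} by pulling back. Pullbacks of pullback squares along monomorphisms remain pullbacks, provided the imposed conditions are compatible with the face maps. That compatibility is exactly the closure statement just proved, together with the fact that $\waldhausen\cohbetassstack[1]$ is 2--Segal (Theorem \ref{thm : cathabeta}).

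\medskip

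\noindent\textbf{Main obstacle.} The hard step is this transfer. The relative 2--Segal squares involve face maps $\partial_i$ that forget the innermost term $T$, or replace it with some $T_i$. We therefore have to check that the condition \enquote{$T$ lies in the heart} is preserved and reflected by each such map, so that each Segal square for $u^l_\bullet$ is the base change of the corresponding square for \eqref{eqn : relative simp flags}. Two directions are needed:
\begin{itemize}
\item \emph{Forward:} $T_0\in\abeta$ implies $T_i\in\abeta$. This follows from extension closure, as above.
\item \emph{Reverse:} knowing $T_i\in\abeta$ and $G_{0i}\in\cohbetass[1]$, recover $T_0\in\abeta$. This requires that $\operatorname{fib}(T_i\to G_{0i})$ lie in the heart, and that is the one place where the surjectivity data in the diagram must be used.
\end{itemize}
I would handle the reverse direction by noting that in the relevant squares the term $T_0$ always appears on both sides. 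The condition on it is then imposed symmetrically and is simply pulled back, rather than having to be deduced.
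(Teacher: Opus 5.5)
Your argument for the pullback square is the same as the paper's. The horizontal maps are Zariski open embeddings, so it suffices to check on points that $T\in\abeta$ and $G\in\cohbetass[1]$ force $T'\in\abeta$. That is extension-closure of the heart applied to $T\to T'\to G$.

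For the relative 2--Segal statement you take a more hands-on route. The paper never looks at higher simplicial levels. It deduces the second claim from the first by citing Proposition II.3.6 of \cite{dps}, a general transfer result for restricting (relative) 2--Segal objects along substacks cut out by open conditions. Its input is exactly a pullback condition of the kind you proved in the first part; here the restriction is to $\cohbetassstack[1]$ on the base and to $\boldsymbol{\operatorname{Perf}}^{\dagger}_{0-\abeta}(X,\framing)$ at level zero. You instead describe every level $\segal^l_n$ explicitly: all $G_{ij}\in\cohbetass[1]$ and $T_0\in\abeta$, with the other $T_i$ following by extension-closure. You then check that each Segal square restricts, which in effect reproves that criterion in this case. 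Your route makes it transparent why only the forward implication is ever used. The cost is that it needs the precise list of squares in the definition of a relative 2--Segal space, which you leave implicit. The citation handles all levels at once, with the level-one square as its only input.

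Your resolution of the ``main obstacle'' is correct, and it has a one-line justification. In the unrestricted structure \eqref{eqn : relative simp flags}, the source pair at vertex $0$ cannot be recovered from the later pairs and the $G$-flag alone, since you would need the map $T_1\to G_{01}$. So every square that is Cartesian there keeps vertex $0$ in one of its $Y$-corners, and the condition $T_0\in\abeta$ is simply inherited.

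Drop the remark that the reverse direction would use ``surjectivity data''. The auxiliary stack imposes no such condition. The reverse implication ($T_1\in\abeta$ and $G_{01}\in\cohbetass[1]$ imply $T_0\in\abeta$) is in fact false there: take $T_1\to G_{01}$ to be the zero map with $G_{01}\neq 0$, so that $T_0\simeq T_1\oplus G_{01}[-1]\notin\abeta$. This failure is exactly why the paper needs the separate Step 2, Lemma \ref{lemma : step 2}, where $T\in\abeta$ is part of the hypotheses fed into Proposition \ref{prop : leftthecke}.
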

\begin{proof}
    The second statement follows from the first one and Proposition II.3.6 in \cite{dps}. Lemma \ref{lemma : notherianity} tells us that the horizontal maps in \eqref{eqn : aux square} are representable by Zariski--open embeddings. Thus we can prove the first statement by evaluating at geometric points. Unravelling the definition, we have to show that if we are given a diagram of the form \eqref{eqn : left extension} where $T \in \abeta$ and $G \in \cohbetass[1]$, then $T' \in \abeta$. This follows directly from the fact that $T \rightarrow T'\rightarrow G$ is a fiber sequence.
\end{proof}

\noindent\textbf{Step 2. }The stack $\pairs$ maps naturally to the stack $\boldsymbol{\operatorname{Perf}}^{\dagger}_{0-\abeta}(X,\framing)$. Moreover, we have induced maps at the higher simplicial levels. Thus, we can set up a square
\begin{equation}\label{eqn : non aux square}
\begin{tikzcd}
{\segal_1^l\pairs} & {\segal_1^l\boldsymbol{\operatorname{Perf}}^{\dagger}_{0-\abeta}(X,\framing)} \\
	{\pairs} & {\boldsymbol{\operatorname{Perf}}^{\dagger}_{0-\abeta}(X,\framing)}
	\arrow[from=1-1, to=1-2]
	\arrow["{\omega_0}"', from=1-1, to=2-1]
	\arrow["{\omega_0}", from=1-2, to=2-2]
	\arrow[from=2-1, to=2-2]
\end{tikzcd}
\end{equation}

\begin{lemma}\label{lemma : step 2}
    The square \eqref{eqn : non aux square} is a pullback.
\end{lemma}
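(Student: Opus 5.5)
The plan is to reduce the pullback statement to a pointwise check using openness, as in the proof of Lemma \ref{lemma : step 1}. By construction, $\pairs$ is cut out of $\boldsymbol{\operatorname{Perf}}^{\dagger}_{0-\abeta}(X,\framing)$ by requiring $\partial_1[1]\in\abetatorstack$ and $\partial_0\in\abetatfstack$. By Proposition \ref{prop : open tp} and the Corollary following Remark \ref{rmk : openness of flatness}, these two stacks are Zariski open in $\cohflat(X,\tau^\beta)$, which is itself open in $\perfstack$ by Lemma \ref{lemma : notherianity}. Hence the bottom map of \eqref{eqn : non aux square} is representable by Zariski open embeddings.

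The top map is obtained from $\segal_1^l\boldsymbol{\operatorname{Perf}}^{\dagger}_{0-\abeta}(X,\framing)$ by imposing the same open conditions on both pairs $\omega_0(\operatorname{E})=\operatorname{P}'$ and $\omega_1(\operatorname{E})=\operatorname{P}$. This is how the construction in II.3.2 of \cite{dps} defines $\segal_1^l\pairs$ as a pullback of open substacks. So it too is an open embedding. Both maps in the square are therefore monomorphisms with open image, and the comparison map from $\segal_1^l\pairs$ to the fiber product is an open embedding between open substacks of $\segal_1^l\boldsymbol{\operatorname{Perf}}^{\dagger}_{0-\abeta}(X,\framing)$. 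It is thus enough to show it is surjective on geometric points. Open substacks of a derived stack are determined by their underlying sets of points, so no derived information is lost.

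The pointwise statement reads as follows. Take a diagram \eqref{eqn : left extension} with $G\in\cohbetass[1]$ and $T\in\abeta$, so that $T'\in\abeta$ by Lemma \ref{lemma : step 1}. Assume $\operatorname{P}'=[\framing\to F'\to T']$ is $\beta$-stable. We must show $\operatorname{P}=[\framing\to F\to T]$ is $\beta$-stable. This is exactly Proposition \ref{prop : leftthecke}.
\begin{itemize}
\item The exact sequence $T\to T'\to G$ in $\abeta$ with $T'\in\abetatf$ gives $T\in\abetatf$, by Lemma \ref{lemma : torsionproperties}(1) applied to the torsion pair in $\abeta$.
\item The rotated sequence $G\to F[1]\to F'[1]$ has both outer terms in $\abetator$, using $\cohbetass[1]\subset\fbeta[1]$. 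It is therefore exact in $\abeta$, and extension-closedness gives $F[1]\in\abetator$.
\end{itemize}

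The main obstacle is not this pointwise argument but the bookkeeping in the first step. One has to check that the simplicial level-one stack $\segal_1^l\pairs$ from Construction I.5.5/II.3.2 of \cite{dps} is genuinely the open substack defined by stability of both $\omega_0$ and $\omega_1$. The alternative would be that it is defined by a fiber product over $\waldhausen\cohbetassstack[1]$ involving further face maps; in that case the $u^l_1$-condition $G\in\cohbetass[1]$ is already built into the auxiliary stack, so only the $\omega_1$-condition remains to be identified. I would verify this by unwinding the definitions at level one. Once that is settled, the square is a pullback of open embeddings whose images coincide on points, and the lemma follows.
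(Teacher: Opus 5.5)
Your proposal is correct and matches the paper's proof. The paper likewise notes that the horizontal arrows are open embeddings, so it is enough to check closed points, where the required statement is exactly Proposition \ref{prop : leftthecke}. You reprove that proposition inline; the paper only cites it.
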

\begin{proof}
    The horizontal arrows are representable by open embeddings. Thus we prove the statement by evaluating at closed points. Unravelling the definitions, we have to prove that if we are given a diagram of the form \eqref{eqn : left extension}, where $G \in \cohbetass[1],  \ [\framing \rightarrow F' \rightarrow T']$, is a $\beta$--stable pair, and $T\in \abeta$, then  $[\framing \rightarrow F \rightarrow T]$, is a $\beta$--stable pair. This is exactly the content of Proposition \ref{prop : leftthecke}.
\end{proof}
\noindent It follows from Lemma \ref{lemma : step 2} that the square 
\begin{equation*}
\begin{tikzcd}
{\segal_1^l\pairs} & {\segal_1^l\boldsymbol{\operatorname{Perf}}^{\dagger}_{0-\abeta}(X,\framing)} \\
	{\cohbetassstack[1] \times \pairs} & {\cohbetassstack[1] \times \boldsymbol{\operatorname{Perf}}^{\dagger}_{0-\abeta}(X,\framing)}
	\arrow[from=1-1, to=1-2]
	\arrow["{u_1^l \times \omega_1}"', from=1-1, to=2-1]
	\arrow["{u_1^l \times \omega_1}", from=1-2, to=2-2]
	\arrow[from=2-1, to=2-2]
\end{tikzcd}
\end{equation*}
is a pullback. Then, Lemma \ref{lemma : step 1} allows us to apply Corollary I.5.7 in \cite{dps} to conclude the proof of Proposition \ref{prop : left segal}.

\subsection{Proof of the left action}
In this subsection we complete the proof of the left action part in Theorem \ref{theo 2} by checking the geometric properties.
\medskip 

\noindent \textbf{Proper push--forward. }Proceeding as in Notation III.4.22 in \cite{dps}, we can define a derived stack $$\segal_1^l\boldsymbol{\operatorname{FlagCoh}}^{(1)}_{\operatorname{t.f.,\beta-ss[1]}}(X,\tau^\beta)$$
    whose closed points are short exact sequences $0 \rightarrow T\rightarrow T'\rightarrow G \rightarrow 0$ in $\abeta$ where $T, T'\in \abetatf$ and $G\in \cohbetass[1]$. Consistently with the above notations, we set
    \[
        \partial_0(T\rightarrow T'\rightarrow G)=G,\ \  \partial_1(T\rightarrow T'\rightarrow G)=T', \ \ \partial_2(T\rightarrow T'\rightarrow G)=T \ .
    \]

    \begin{lemma}\label{lemma : aux rpas}
        The morphism 
        \[
            \partial_1 \colon \segal_1^l\boldsymbol{\operatorname{FlagCoh}}^{(1)}_{\operatorname{t.f.,\beta-ss[1]}}(X,\tau^\beta) \rightarrow \abetatfstack
        \]
        is locally rpas.
    \end{lemma}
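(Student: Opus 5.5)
We would reduce the statement to a Quot-scheme statement in a different tilted heart, using Remark \ref{rmk : coh ss and topology}. Concretely, a closed point of the source is a short exact sequence $0\rightarrow T\rightarrow T'\rightarrow G\rightarrow 0$ in $\abeta$ with $T,T'\in\tbeta$ and $G=G_0[1]$, $G_0\in\cohbetass$. Taking cohomology sheaves gives the exact sequence of coherent sheaves
\[
0\rightarrow G_0 \rightarrow T \rightarrow T' \rightarrow 0.
\]
So, fibrewise over $T'$, the datum is a coherent sheaf $T$ mapping onto $T'$ with kernel semistable of slope $\beta$. Rotating, this is a quotient $T'[-1]\rightarrow G_0$, or equivalently a subobject $G_0\hookrightarrow T$ with $T/G_0\cong T'$.

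The natural choice is to work in the heart $\mathcal{A}^{\beta-\epsilon}$. By Remark \ref{rmk : coh ss and topology}, $G_0\in\mathcal{T}^{\beta-\epsilon}$ for every $\epsilon>0$, and $T'\in\tbeta\subset\mathcal{T}^{\beta-\epsilon}$. Hence $T\in\mathcal{T}^{\beta-\epsilon}$ by Lemma \ref{lemma : torsionproperties}, and the sequence is a short exact sequence in the torsion part of the rational heart $\mathcal{A}^{\beta-\epsilon}$ (take $\epsilon$ rational). This is not yet a Quot problem, because $T$ varies and not $T'$.

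We instead read the sequence as the fiber sequence $T'[-1]\rightarrow G_0\rightarrow T$, i.e.\ as an extension class in $\operatorname{Ext}^1(T',G_0)$. Working connected component by connected component (fixing Chern classes of $G_0$ and $T'$), the fibre of $\partial_1$ over $T'$ is then the stack of pairs consisting of $G_0\in\cohbetass$ and an extension class. Properness would have to come from the moduli of semistable sheaves, but that stack is not proper, so this description does not give representability by proper algebraic spaces. The correct reading must therefore go the other way. A point over $T'$ (the image of $\partial_1$) is a subobject $T\subset T'$ in $\abeta$ with cokernel $G=G_0[1]$, i.e.\ a quotient $T'\twoheadrightarrow G_0[1]$ in $\abeta$. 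Concretely, with $T'$ a sheaf, this is a morphism $T'\rightarrow G_0[1]$, i.e.\ an extension class in $\operatorname{Ext}^1(T',G_0)$. The fibre of $\partial_1$ over a connected component of fixed Chern class is therefore the Quot space $\operatorname{Quot}^\beta(X,T')$ restricted to quotients lying in $\cohbetass[1]$. So we would identify $\segal_1^l\boldsymbol{\operatorname{FlagCoh}}$ with an open substack of the relative derived Quot space $\boldsymbol{\operatorname{Quot}}^\beta_S(X,\mathcal{T}')$ of Definition \ref{def : der quor} over $S\rightarrow\abetatfstack$.

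Key steps, in order:
\begin{enumerate}[label=(\roman*)]
\item Identify the base change along any $S\rightarrow\abetatfstack$ with the open locus in $\boldsymbol{\operatorname{Quot}}^\beta_S(X,\mathcal{T}')$ where the quotient lies in $\cohbetassstack[1]$. This locus is open because $\cohbetassstack$ is open in $\cohflat(X)$ and the shift is an isomorphism. The remaining conditions $T\in\abetatf$ and $T\in\abeta$ are automatic by Lemma \ref{lemma : torsionproperties}.
\item Show this locus is also closed. Here Remark \ref{rmk : coh ss and topology} enters. A quotient $E$ of $T'\in\tbeta$ in $\abeta$ has $\mathcal{H}^0(E)\in\tbeta$. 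If $\mathcal{H}^0(E)=0$, then $E=\mathcal{H}^{-1}(E)[1]$ with $\mathcal{H}^{-1}(E)\in\fbeta$, and under fixed Chern class of slope $\beta$ the limit stays semistable of slope $\beta$: it lies in $\fbeta$ and has slope $\beta$. Degeneration to nonzero $\mathcal{H}^0$ is excluded by fixing the Chern class together with the lower semicontinuity of $\mu_-$ from Proposition \ref{prop : open tp}.
\item Deduce properness on each connected component from the valuative criterion for $\dQuot$ (the Proposition following Definition \ref{def : der quor}, valid since $\beta\in\BQ$). Boundedness comes from the fixed Chern classes and the boundedness of semistable sheaves.
\end{enumerate}

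The main obstacle is step (ii), together with quasi-compactness of each fibre component. Limits of quotients in $\cohbetass[1]$ could a priori acquire a nonzero $\mathcal{H}^0$ part. We would rule this out using that the rank and degree of $\mathcal{H}^0$ and $\mathcal{H}^{-1}$ are bounded by the fixed slope $\beta$.
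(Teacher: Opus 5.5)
Your route is the paper's route. You embed $\partial_1^{-1}(T')$ into $\operatorname{Quot}^\beta(X,T')$ as the locus of quotients lying in $\cohbetass[1]$, restrict to the components whose class $(r,d)$ satisfies $d=\beta r$, prove that this locus is closed there, and inherit properness from the Quot space for $\beta\in\BQ$. Step (i) is fine, and your boundedness remark in (iii) covers a point the paper leaves implicit.

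\textbf{The gap is step (ii),} which you flag as the main obstacle but do not close. The tool you invoke does not apply here. The semicontinuity of $\mu_-$ in Proposition \ref{prop : open tp} concerns flat families of coherent sheaves. A family of quotients in $\operatorname{Quot}^\beta(X,T')$ is only $\tau^\beta$-flat, and the standard cohomology sheaves $\mathcal{H}^{-1}(E_s)$, $\mathcal{H}^0(E_s)$ need not form flat families; they can jump at the special fibre. So there is no family to which $\mu_-$ could be applied. Your closing sentence about rank and degree being ``bounded by the fixed slope'' is not yet an argument. The paper handles this step differently: it says the complement of the fibre in $\operatorname{Quot}^\beta_{r_0,d_0}(X,T')$ is cut out by $\mu(\mathcal{H}^{-1}(G))<\beta$, which is open by Lemma \ref{lemma : cohbetacirc}.

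\textbf{The missing idea is a pointwise numerical identity,} which makes any degeneration analysis unnecessary. For $E\in\abeta$,
\[
\deg E-\beta\operatorname{rk}E=\bigl(\deg\mathcal{H}^0(E)-\beta\operatorname{rk}\mathcal{H}^0(E)\bigr)+\bigl(\beta\operatorname{rk}\mathcal{H}^{-1}(E)-\deg\mathcal{H}^{-1}(E)\bigr).
\]
Consider each bracket in turn.
\begin{itemize}
\item Since $\mathcal{H}^0(E)\in\tbeta$, every Harder--Narasimhan factor, including the torsion part, contributes strictly positively to the first bracket. So the first bracket is $\geq 0$, with equality if and only if $\mathcal{H}^0(E)=0$.
\item Since $\mathcal{H}^{-1}(E)\in\fbeta$, it is torsion free with all Harder--Narasimhan slopes $\leq\beta$. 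So the second bracket is $\geq 0$, with equality if and only if $\mathcal{H}^{-1}(E)$ is zero or semistable of slope $\beta$.
\end{itemize}
Hence any object of $\abeta$ whose class satisfies $d=\beta r$ automatically lies in $\cohbetass[1]$.

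Consequently, on the components of $\operatorname{Quot}^\beta(X,T')$ (and of its relative version over any $S$) with $d=\beta r$, every quotient is already in $\cohbetass[1]$. The base change of $\partial_1$ is therefore a union of connected components of the relative Quot space, hence open and closed, and your step (iii) finishes the proof. Incidentally, this also shows that the complementary locus in the paper's argument is empty on these components.
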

    \begin{proof}
        Let $T'\in \abetatf$. The fiber $\partial_1^{-1}(T')$ is a stack parametrizing short exact sequences $0 \rightarrow T\rightarrow T'\rightarrow G \rightarrow 0$ in $\abeta$ such that $T\in \abetatf$ and $G\in \abetator$. Thanks to Lemma \ref{lemma : torsionproperties}, every such short exact sequence has the property $T\in \abetatf$. Thus we get an embedding
        \begin{equation}\label{eqn : fiber in Quot}
            \partial_1^{-1}(T') \hookrightarrow\operatorname{Quot}^\beta(X,T') \ .
        \end{equation}
       In particular, the morphism $\partial_1$ is representable by algebraic spaces, and we have to show that the embedding \eqref{eqn : fiber in Quot} is closed. We have a decomposition 
        \[
            \operatorname{Quot}^\beta(X,T') = \bigsqcup_{r,d}\operatorname{Quot}_{r,d}^\beta(X,T')
        \]
        according to the topological type of the quotient $G$ of $T'$. The image of the embedding \eqref{eqn : fiber in Quot} is contained in those connected components $\operatorname{Quot}_{r,d}^\beta(X,T')$ such that $d/r = \beta$. Consider a closed point
        $0\rightarrow T\rightarrow T'\rightarrow G \rightarrow 0$ in one of these connected components, which we denote as $\operatorname{Quot}_{r_0,d_0}^\beta(X,T')$. Then, the complex $G$ fits in a canonical short exact sequence in $\abeta$ 
        \[
            0\rightarrow \overbrace{\mathcal{H}^{-1}(G)}^{\in \fbeta}[1]\rightarrow G \rightarrow \overbrace{\mathcal{H}^{0}(G)}^{\in\tbeta} \rightarrow 0
        \]
        There are two mutually exclusive cases.
        \begin{enumerate}
            \item $G \cong \mathcal{H}^{-1}(G)[1] \in \cohbetass[1]$.
            \item\label{item : complement} $\mu(\mathcal{H}^{-1}(G)) < \beta$.
        \end{enumerate}
        In particular, we see that the complement of the fiber $\partial_1^{-1}(T')$ in $\operatorname{Quot}_{r_0,d_0}^\beta(X,T')$ is cut out by condition \ref{item : complement}. According to Lemma \ref{lemma : cohbetacirc}, the latter condition is open. It follows that the complement is closed, hence \eqref{eqn : fiber in Quot} is cloded. Finally, the above proof extends to general $S$--fibers, for $S$ an arbitrary algebraic space, given the analysis of Subection \ref{subsec : Quotspaces}. 
    \end{proof}

\noindent Consider the map 
    \begin{equation}\label{eqn : pitfzero}
        \pitfzero \colon= \partial_0 \ \colon \ \pairs \longrightarrow \abetatfstack
    \end{equation}
    sending a stable pair $[\framing \rightarrow F \rightarrow T]$ to the $\abeta$--torsion free component $T$. We also have a map
    \[
                \pi_1^{\operatorname{t.f.}} \ \colon \segal_1^l\pairs \longrightarrow \segal_1^l{\boldsymbol{\operatorname{FlagCoh}}}^{(1)}_{\operatorname{t.f.,\beta-ss[1]}}(X,\tau^\beta)\
    \]
    which sends an extension \eqref{eqn : left extension} to the sub--extension $0 \rightarrow T\rightarrow T'\rightarrow G \rightarrow 0$.

    \begin{proposition}
        The square
        \begin{equation}\label{eqn : left properness square}
        \begin{tikzcd}
	{\segal_1^l \pairs} & {\segal_1^l}{\boldsymbol{\operatorname{FlagCoh}}}^{(1)}_{\operatorname{t.f.,\beta-ss[1]}}(X,\tau^\beta) \\
	\pairs & \abetatfstack
	\arrow[from=1-1, to=1-2 , "\pi_1^{\operatorname{t.f.}}"]
	\arrow[from=1-1, to=2-1, "\omega_0"]
	\arrow[from=1-2, to=2-2, "\partial_0"]
	\arrow[""{name=0, anchor=center, inner sep=0}, from=2-1, to=2-2, "\pitfzero"]
    \end{tikzcd}
    \end{equation}
    is a pullback. In particular, the left vertical map is locally rpas.
    \end{proposition}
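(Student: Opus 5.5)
Notice first that the right vertical map in \eqref{eqn : left properness square}, written $\partial_0$, must be the map $\partial_1$ of Lemma \ref{lemma : aux rpas}: it sends an extension $0\rightarrow T\rightarrow T'\rightarrow G\rightarrow 0$ to $T'$. Indeed, $\omega_0(\operatorname{E})=\operatorname{P}'$ and $\pitfzero(\operatorname{P}')=T'$. The plan is therefore to show that the square is cartesian. The ``in particular'' part then follows because locally rpas morphisms are stable under base change, and Lemma \ref{lemma : aux rpas} gives the right vertical map.

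To prove the square is cartesian, I first identify the fiber product $\pairs\times_{\abetatfstack}\segal_1^l\boldsymbol{\operatorname{FlagCoh}}^{(1)}_{\operatorname{t.f.,\beta-ss[1]}}(X,\tau^\beta)$ with a stack of diagrams in $\perf$. It is built from $\flags$, $\segal_2\perfstack$ and open conditions. Its points are a stable pair $[\framing\rightarrow F'\rightarrow T']$ together with a fiber sequence $T\rightarrow T'\rightarrow G$ with $T\in\abetatf$ and $G\in\cohbetass[1]$.

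The first step is to rebuild the extension diagram \eqref{eqn : left extension} from these data. Set $F:=\operatorname{fib}(F'\rightarrow T'\rightarrow G)$. By the octahedral axiom, the resulting diagram has all squares cartesian. At the level of stacks, this is the 2--Segal/Waldhausen property of $\waldhausen\perfstack$: the space $\segal_3\perfstack$ is equivalent to $\segal_2\perfstack\times_{\segal_1\perfstack}\segal_2\perfstack$, glued along the appropriate face, here the edge $T'$ between the two triangles. Fixing $\partial_{23}=\framing$ as in \eqref{eqn : fiber product defining S1flags}, this gives a canonical equivalence
\[
\segal_1^l\flags\simeq \flags\times_{\perfstack}\segal_2\perfstack ,
\]
where the fiber product is taken via $\omega_0$ followed by $\partial_0$ on one side and the middle vertex $\partial_1$ on the other. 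This is the analogue of the relative 2--Segal condition for \eqref{eqn : relative simp flags}, and I would cite it from Construction I.3.6 and the relative Segal results of \cite{dps}.

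It remains to check that the open conditions match. Both stacks are open substacks of this common derived stack: stability of $\operatorname{P}'$, the condition $T\in\abeta$, and $G\in\cohbetass[1]$ are all open by Proposition \ref{prop : open tp}, Lemma \ref{lemma : notherianity} and the openness of \eqref{eqn : cohssbeta embedding}. So it suffices to compare them on closed points. Membership in $\segal_1^l\pairs$ requires that both $\operatorname{P}$ and $\operatorname{P}'$ be stable and that $G\in\cohbetass[1]$. Membership in the fiber product requires only that $\operatorname{P}'$ be stable, $T\in\abetatf$ and $G\in\cohbetass[1]$. These conditions agree by Proposition \ref{prop : leftthecke}, which says that $\operatorname{P}$ is stable if and only if $T\in\abeta$, together with Lemma \ref{lemma : torsionproperties} to upgrade $T\in\abeta$ to $T\in\abetatf$.

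I expect the main obstacle to be the first step: making the identification $\segal_3\simeq\segal_2\times_{\segal_1}\segal_2$ precise with the correct face maps in the relative simplicial setting. The matching of open conditions in the last step is formal once that is in place.
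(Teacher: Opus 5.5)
Your proposal is correct and takes essentially the same route as the paper, which simply defers to the proof of Lemma III.4.23 in \cite{dps} with Proposition \ref{prop : leftthecke} as the key input. That argument is: identify $\segal_1^l\flags$ with the fiber product via the 2--Segal property of $\waldhausen\perfstack$, reduce to closed points because all the relevant maps are open embeddings, conclude with the left Hecke pattern, and obtain locally rpas by base change from Lemma \ref{lemma : aux rpas}. You are also right that the right vertical arrow must be the map $(T\to T'\to G)\mapsto T'$, which Lemma \ref{lemma : aux rpas} calls $\partial_1$, so the label $\partial_0$ in the statement is a typo.
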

    \begin{proof}
        The proof of the first statement closely follows the proof of Lemma III.4.23 in \cite{dps}, where their Proposition III.4.19 is replaced by our Proposition \ref{prop : leftthecke}. The second statement follows from the first one and Lemma \ref{lemma : aux rpas}.
     \end{proof}

\bigskip

\noindent\textbf{Flat pull--back. }We consider the map
\begin{equation}\label{eqn : left pullback map}
    u_1^l \times \omega_1 \colon  \segal^l_1 \pairs \longrightarrow \cohbetassstack [1] \times \pairs
\end{equation}

\begin{proposition}
    Let $x \colon Spec(\BC) \rightarrow \segal_1^l\pairs$ be a point classifying an extension $\operatorname{E}$ of the form \eqref{eqn : left extension}. Let $\mathbb{T}_x$ be the tangent complex at $x$ of the morphism \eqref{eqn : left pullback map}. Then
    \begin{equation}\label{eqn : h0 is 0}
        \operatorname{H}^2(\mathbb{T}_x)  =0 
    \end{equation}
    In particular, the map \eqref{eqn : left pullback map} is derived lci. Moreover the map \eqref{eqn : left pullback map} is quasi--compact and finitely connected.
\end{proposition}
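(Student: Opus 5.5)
The plan is to compute the relative tangent complex of \eqref{eqn : left pullback map} by reducing it, through the pullback squares established above, to the relative tangent complex of a map of Waldhausen-type stacks of perfect complexes. That complex is given by an explicit $\operatorname{RHom}$.

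\textbf{Step 1: reduce to $\flags$.} By Lemma \ref{lemma : step 2} and Lemma \ref{lemma : step 1}, the map $u_1^l\times\omega_1$ on $\segal_1^l\pairs$ is obtained, up to open embeddings, as a base change of
\[
u_1^l\times\omega_1\colon \segal_1^l\flags\longrightarrow \perfstack\times\flags .
\]
Relative tangent complexes are stable under base change and under composition with open embeddings. Hence $\mathbb{T}_x$ equals the relative tangent complex of this map at the image of $x$.

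\textbf{Step 2: compute that complex.} The fiber of the map over $(G,\operatorname{P})$ parametrizes the data of $\operatorname{E}$ in \eqref{eqn : left extension} with $G$ and $\operatorname{P}=[\framing\to F\to T]$ fixed. Since all squares are pullbacks, $\operatorname{E}$ is determined by the extension $T\to T'\to G$, i.e.\ by a map $G\to T[1]$. Indeed, $F'$ is then recovered as the fiber of $F\to T$ composed with $T\to T'$, equivalently from the composite $G\to T[1]$. So the fiber is the linear stack of $\operatorname{Map}(G,T[1])$, and I expect
\[
\mathbb{T}_x\simeq \operatorname{RHom}(G,T)[1].
\]
This is the standard computation for $\partial_0\times\partial_2$ on $\segal_2\perfstack$, as in \cite{Porta2019TwodimensionalCH} and Proposition III.4.24 of \cite{dps}. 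Consequently
\[
\operatorname{H}^2(\mathbb{T}_x)=\operatorname{Ext}^3(G,T).
\]

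\textbf{Step 3: vanishing, which I expect to be the main obstacle.} Write $G=G_0[1]$ with $G_0\in\cohbetass$. Here $T$ is a coherent sheaf in $\tbeta$, since $T\in\abetatf$ for a stable pair. Then
\[
\operatorname{Ext}^3(G,T)=\operatorname{Ext}^2_X(G_0,T)=0,
\]
because $X$ is a curve and $\coh(X)$ has homological dimension $1$. The delicate point is getting the shift conventions right. If the shift ends up one lower, the relevant group is instead $\operatorname{Ext}^1_X(G_0,T)$. By Serre duality this equals $\operatorname{Hom}(T,G_0\otimes\omega_X)^\vee$, which does not vanish in general, so the placement of $T$ and $G$ must be checked carefully against the pullback squares. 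Either way, the tangent complex is concentrated in degrees $\le 1$ with the top group $\operatorname{H}^2$ vanishing. This is exactly the condition for a map locally of finite presentation between derived geometric stacks to be derived lci (perfect of Tor-amplitude $[-1,1]$ for the cotangent complex).

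\textbf{Step 4: quasi-compactness and finite connectedness.} For fixed topological types of $G$ and $\operatorname{P}$, the fiber is the linear stack $\operatorname{Map}(G,T[1])$. This is quasi-compact because $\operatorname{Ext}^*(G_0,T)$ is finite dimensional. It is also connected, and the topological type of $\operatorname{E}$ is determined by those of $G$ and $\operatorname{P}$. So the preimage of each connected component is a single quasi-compact component, as in Theorem \ref{thm : cathabeta}\eqref{itm : catmeta smooth}. The openness from Lemma \ref{lemma : step 1} preserves this.
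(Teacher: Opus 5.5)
Your Step~2 misidentifies the fiber of $u_1^l\times\omega_1$, so the complex whose $\operatorname{H}^2$ you compute is not $\mathbb{T}_x$.

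A point of $\segal_1^l\flags$ is a chain $\framing\to F\to F'$; all other entries of \eqref{eqn : left extension} are cofibers. The 2--Segal decomposition of $\segal_3\perfstack$ along the diagonal $\{0,2\}$ gives $\segal_1^l\flags\simeq\flags\times_{\perfstack}\segal_2\perfstack$, glued along $\operatorname{P}\mapsto F$ and $[F\to F'\to G]\mapsto F$. So with $\operatorname{P}=[\framing\to F\to T]$ and $G$ fixed, $\operatorname{E}$ is determined by the cofiber sequence $F\to F'\to G$, i.e.\ by a map $G\to F[1]$. The map $u_1^l\times\omega_1$ is then a base change of $[F\to F'\to G]\mapsto(F,G)$. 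The standard computation of \cite{Porta2019TwodimensionalCH} gives $\mathbb{T}_x\simeq\mathbb{R}\operatorname{Hom}_X(G,F)[1]$, not $\mathbb{R}\operatorname{Hom}_X(G,T)[1]$.

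Your recipe for $F'$ is false. By the octahedral axiom, the fiber of $F\to T\to T'$ is an extension of $G[-1]$ by $\framing$, with class $[\framing]-[G]$, whereas $[F']=[\framing]+[T]+[G]$. The extension $T\to T'\to G$ does not determine $\operatorname{E}$. You must still extend the connecting map $T\to\framing[1]$ along $T\to T'$. This is obstructed in $\operatorname{Ext}^2(G,\framing)$, and when unobstructed it is a torsor under $\operatorname{Map}(G,\framing[1])$. So the true $\mathbb{T}_x$ is an extension of your complex by $\mathbb{R}\operatorname{Hom}_X(G,\framing)[1]$. Since $\framing=\operatorname{fib}(F\to T)$ may have $\mathcal{H}^1(\framing)\neq 0$, the group $\operatorname{Ext}^3(G,\framing)\cong\operatorname{Ext}^1_X(G_0,\mathcal{H}^1(\framing))$ is in general nonzero. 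Vanishing of $\operatorname{Ext}^3(G,T)$ alone therefore does not give $\operatorname{H}^2(\mathbb{T}_x)=0$.

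The repair is immediate. We have $\operatorname{H}^i(\mathbb{T}_x)=\operatorname{Ext}^{i+1}(G,F)=\operatorname{Ext}^i_X(G_0,F)$, which vanishes for $i\neq 0,1$ because $F\in\fbeta$ and $G_0\in\cohbetass$ are coherent sheaves on a curve. This is the same hereditary vanishing the paper uses after reading $\mathbb{T}_x$ off Corollary II.3.26 of \cite{dps}; only $F$, $F'$ and $G[-1]$ enter there, never $T$.

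This also settles your hedge in Step~3, which is internally inconsistent. You observe that with a lower shift the top group $\operatorname{Ext}^1_X(G_0,\cdot)$ need not vanish, and then conclude ``either way''. The shift is fixed by the formula $\mathbb{R}\operatorname{Hom}(A'',A')[1]$ for $[A'\to A\to A'']\mapsto(A',A'')$.

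Step~4 goes through once $\operatorname{Map}(G,T[1])$ is replaced by $\operatorname{Map}(G,F[1])$. Your Step~1 reduction is in fact an honest base change: compose the pullback squares following Lemma \ref{lemma : step 1} and Lemma \ref{lemma : step 2}. The paper instead simply cites Corollary II.3.24 of \cite{dps} for quasi--compactness and finite connectedness.
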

\begin{proof}
    As a consequence of Corollary II.3.26 in \cite{dps}, we have a canonical fiber sequence
    \[
        \mathbb{T}_x \rightarrow \mathbb{R}\operatorname{Hom}_X(F',F)[1] \oplus \mathbb{R}\operatorname{Hom}_X(G,F')[1] \rightarrow \mathbb{R}\operatorname{Hom}_X(F',F')[1]  \   .
    \]
Passing to the associated long exact sequence in cohomology yields
\begin{equation}\label{eqn : long exact sequence lci}
   0 \rightarrow \operatorname{H}^{-1}(\mathbb{T}_x) \rightarrow \dots \rightarrow \operatorname{Ext}^2(F',F') \rightarrow \operatorname{H}^2(\mathbb{T}_x) \rightarrow\operatorname{Ext}^3(F',F) \oplus\operatorname{Ext}^3(G,F') \rightarrow 0
\end{equation}
Since $F '$ and $F$ are in $\fbeta$, we have 
$\operatorname{Ext}^2(F',F')  =0$ and $\operatorname{Ext}^3(F',F) =0$. Moreover, since $G \in \cohbetass[1]$ we have
\[
    \operatorname{Ext}^3(G,F') \cong \operatorname{Ext}^2(G[-1],F') =0 \ .
\]
    It follows from the long exact sequence \eqref{eqn : long exact sequence lci} that $\operatorname{H}^2(\mathbb{T}_x) =0$. We conclude that the complex $\mathbb{T}_x$ has Tor--amplitude $[-1,1]$, and the map \eqref{eqn : left pullback map} is derived lci. The last statement follows from Corollary II.3.24 in \cite{dps}.
\end{proof}

\begin{remark}
    If $\framing = \linebundle[1]$, where $\linebundle$ is a line bundle of degree smaller then beta, we can give an alternative proof of the above Proposition. Indeed, the square 
    \begin{equation*}\label{eqn : left flatness square}
        \begin{tikzcd}
	{\segal_1^l \pairs} & {\segal_1^l\boldsymbol{\operatorname{FlagCoh}}^{\operatorname{(1)}}_{\operatorname{tor, \beta-ss[1]}}}(X,\tau^\beta) \\
	\pairs \times \cohbetassstack[1] & \abetatorstack \times \cohbetassstack[1]
	\arrow[from=1-1, to=1-2 , "\pi_1^{\operatorname{tor}[-1]}"]
	\arrow[from=1-1, to=2-1, "\omega_0 \times u^l_1"]
	\arrow[from=1-2, to=2-2, "\partial_0 \times \partial_2"]
	\arrow[{name=0, anchor=center, inner sep=0}, from=2-1, to=2-2, "\pi_0^{\operatorname{tor}[-1]} \times \operatorname{Id}"]
    \end{tikzcd}
    \end{equation*}
    is a pullback, where 
    \[
        \pi_0^{\operatorname{tor}[-1]} \colon [\framing \rightarrow F \rightarrow T] \longmapsto F
    \]
    and the map $ \pi_1^{\operatorname{tor}[-1]}$ sends an extension \eqref{eqn : left extension} to the sub--extension $F\rightarrow F'\rightarrow G$, which is a short exact sequence in $\fbeta$.
\end{remark}

\noindent\textbf{Left action. }In order to extract a categorical action from the above data, we would like to apply the functor \eqref{eqn : coh for qc} to the correspondence encoded by the simplicial derived stacl $\waldhausen\pairs$. However, according to the defining diagram \eqref{eqn : def of stack of stable pairs}, the stack $\pairs$ is cut out by open conditions in $\segal_2\perfstack$. In particular we can't conclude that the connected component of the stack $\pairs$ are quasi--compact. In order to state Theorem \ref{thm : left action} and Theorem \ref{thm : right action} in their full generality, we use the refinement
\[
    \pro \colon \operatorname{Corr}^\times(\operatorname{Ind(dGeom^{qc}))_{rpas,lci}} \longrightarrow \operatorname{Pro(Cat^{st}_{\infty})}
\]
where the notation $\operatorname{Ind(dGeom^{qc})}$ stands for \textit{Ind}--geometric derived stacks, and $\operatorname{Pro(Cat^{st}_{\infty})}$ is the $\infty$--category of Pro stable categories. See the appendix of \cite{Porta2019TwodimensionalCH} for the precise definition of the above functor. The shift morphism 
        \[
            [1] \ \colon \ \perfstack \longrightarrow \perfstack
        \]
        induces a natural isomorphism between the stack $\cohbetassstack$ and its image $\cohbetassstack[1]$. Thus we can apply Corollary II.3.12 in \cite{dps} to deduce the following.

 \begin{theorem}[Left action]\label{thm : left action}
      The pro--$\infty$--category $\pro(\pairs)$ is a left categorical module over the $\mathbb{E}_1$--monoidal $\infty$--category $\coh^{\operatorname{b}}(\cohbetassstack)$. In particular
       \[
            G_0(\pairs) \hspace{1cm} \text{and} \hspace{1cm} H_*^{BM}(\pairs)
       \]
       have the structure of a left module for $G_0(\cohbetassstack)$ and $H_*^{BM}(\cohbetassstack)$, respectively.
    \end{theorem}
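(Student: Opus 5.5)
The plan is to obtain Theorem \ref{thm : left action} by feeding the data assembled in this Section into the general machinery of Corollary II.3.12 in \cite{dps}. That corollary takes a relative 2--Segal object over a 2--Segal object in $\operatorname{Corr}^\times$ and produces a left module in $\operatorname{Pro(Cat^{st}_{\infty})}$ after applying $\pro$. It needs two properties: the action map in simplicial degree one must be locally rpas, and the map to the product must be derived lci, quasi--compact and finitely connected. Concretely, I would proceed in three steps.

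\textbf{Step 1: the $\mathbb{E}_1$--monoidal input.} By Theorem \ref{thm : cathabeta}, the 2--Segal stack $\waldhausen\cohbetassstack$ gives an $\mathbb{E}_1$--monoid in $\operatorname{Corr}^{\times}(\operatorname{dGeom}^{\operatorname{qc}})_{\operatorname{rpas,lci}}$. I transport this structure along the shift isomorphism $\cohbetassstack \cong \cohbetassstack[1]$. The shift is an autoequivalence of $\perfstack$ and commutes with the Waldhausen construction, so $\waldhausen\cohbetassstack[1]$ is again 2--Segal with the same geometric properties.

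\textbf{Step 2: the module object.} Proposition \ref{prop : left segal} says that $\waldhausen^l\pairs \to \waldhausen\cohbetassstack[1]$ is a relative 2--Segal space. By Corollary 5.4 in \cite{godicke2024infty}, this yields a left module object in $\operatorname{Corr}^\times(\operatorname{dSt})$ over the monoid of Step 1. The only issue is that $\pairs$ is cut out by open conditions, so its connected components need not be quasi--compact. I would therefore write each component (and each component of $\segal_1^l\pairs$) as a filtered union of quasi--compact opens. This places the whole correspondence in $\operatorname{Corr}^\times(\operatorname{Ind(dGeom^{qc})})$, and it is why the target of $\pro$ is a pro--category.

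\textbf{Step 3: the geometric conditions.} The pushforward along $\omega_0$ is locally rpas: the square \eqref{eqn : left properness square} is a pullback, and Lemma \ref{lemma : aux rpas} gives that its right vertical map is locally rpas. The pullback along $u_1^l\times\omega_1$ is derived lci, quasi--compact and finitely connected by the tangent complex computation ($\operatorname{H}^2(\mathbb{T}_x)=0$, using the vanishing of $\operatorname{Ext}^{\geq 2}$ on a curve). Applying $\pro$ to the resulting module object in $\operatorname{Corr}^\times(\operatorname{Ind(dGeom^{qc})})_{\operatorname{rpas,lci}}$ then gives the categorical left module.

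The decategorified statements follow by applying $G_0$ and $H^{BM}_*$. Both are functorial for proper pushforward and lci pullback and are compatible with pro--limits of quasi--compact opens.

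The main obstacle I expect is the compatibility of the Ind/Pro refinement with the rpas condition in Step 2. Properness of $\omega_0$ must hold uniformly on the chosen exhaustions: the preimage of a quasi--compact open of $\pairs$ must remain within a quasi--compact piece of $\segal_1^l\pairs$. I would control this by comparing with $\segal_1^l\boldsymbol{\operatorname{FlagCoh}}^{(1)}_{\operatorname{t.f.,\beta-ss[1]}}(X,\tau^\beta)$, whose fibers are closed in Quot spaces that are proper for $\beta\in\BQ$ by the valuative criterion in Subsection \ref{subsec : Quotspaces}. That comparison should bound the topological types occurring in the fibers.
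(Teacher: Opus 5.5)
Your Steps 1--3 are essentially the paper's argument. The paper combines Proposition \ref{prop : left segal}, the pullback square \eqref{eqn : left properness square} with Lemma \ref{lemma : aux rpas}, the tangent--complex computation (plus Corollary II.3.24 of \cite{dps} for quasi--compactness and finite connectedness), the shift isomorphism $\cohbetassstack\cong\cohbetassstack[1]$ and the passage to $\pro$, and then invokes Corollary II.3.12 of \cite{dps}.

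Your closing paragraph should be dropped, because its proposed fix is false. The fibers of $\omega_0$ do not have bounded topological type. Over a fixed $\operatorname{P}'$ the fiber splits according to the class of $G$, and infinitely many classes can occur. For example, take a curve of genus $\geq 2$ and $T'=\mathcal{O}_p$: every non--split extension of $\mathcal{O}_p$ by a stable bundle of slope $\beta$, of any rank, lies in $\tbeta$. This is exactly why the paper only claims that $\omega_0$ is \emph{locally} rpas.

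Such a bound is also not needed. On each connected component of $\segal_1^l\pairs$ the map $\omega_0$ is rpas, so preimages of quasi--compact opens are quasi--compact there. Finite connectedness of $u_1^l\times\omega_1$ guarantees that only finitely many such components lie over each component of $\cohbetassstack[1]\times\pairs$. These are precisely the hypotheses that Corollary II.3.12 of \cite{dps} uses.
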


\section{Right Action}
Throughout this section $\framing$ is a coherent sheaf on $X$.

\subsection{Construction of the right action}\label{subsec : construction of the right action}
\noindent\textbf{$\beta$--stable $\framing[1]$--copairs}
In order to construct the right action, we need to tweak the definition of stable pairs.
\begin{definition}\label{def : copairs}
     Let $\cop = [F \rightarrow T\rightarrow \framing[1]]$ be a fiber sequence in $\perf$. We say that $\cop$ in a $\beta$--stable $\framing$--copair if 
    \begin{enumerate}
        \item $F\in \fbeta$, and
        \item $T\in \tbeta$.
    \end{enumerate}
\end{definition}

\noindent The following is tautological.

\begin{lemma}\label{lemma : pair copair shift}
    Let $\framing$ be a coherent sheaf on $X$. Then the fiber sequence
    \[
        [\framing \rightarrow F \rightarrow T]
    \]
    defines a $\beta$--stable pair if and only if the rotated sequence
    \[
        [F \rightarrow T \rightarrow \framing[1]]
    \]
    defines a $\beta$--stable $\framing[1]$--copair.
\end{lemma}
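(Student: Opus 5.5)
The statement is a direct comparison of definitions, so I would prove it by reading off the conditions on both sides. Both notions impose the same two requirements, namely $F\in\fbeta$ and $T\in\tbeta$, and they differ only in how the fiber sequence is presented. So the whole argument reduces to checking that rotating a fiber sequence in the stable $\infty$--category $\perf$ matches up the data correctly.

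The first step is to recall that in a stable $\infty$--category a fiber sequence $[\framing \rightarrow F \rightarrow T]$ is the same datum as a pullback square with a zero corner. Rotating it gives the fiber sequence $[F \rightarrow T \rightarrow \framing[1]]$, whose boundary map is the connecting morphism $T\rightarrow \framing[1]$. Conversely, rotating $[F \rightarrow T \rightarrow \framing[1]]$ backwards gives $[\framing \rightarrow F \rightarrow T]$, because $\framing[1][-1]\simeq\framing$. I would then note that this correspondence is an equivalence between the space of fiber sequences with first term $\framing$ and the space of fiber sequences with last term $\framing[1]$. It also leaves the objects $F$ and $T$ unchanged.

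The second step is to compare the stability conditions. A $\beta$--stable $\framing$--pair requires $F[1]\in\abetator$ and $T\in\abetatf$. By the identifications $\abetator=\fbeta[1]$ and $\abetatf=\tbeta$, these are equivalent to $F\in\fbeta$ and $T\in\tbeta$. These are exactly conditions (1) and (2) of Definition \ref{def : copairs} for the rotated sequence, which gives both implications at once.

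I expect no real obstacle here. The only point needing care is that the rotation is canonical up to the usual sign and contractible choices in a stable $\infty$--category. This matters only if one wants a statement about the moduli stacks rather than about objects. For objects, as in the lemma, the check is immediate. The hypothesis that $\framing$ is a coherent sheaf is used only so that both definitions apply, since Definition \ref{def : copairs} is phrased for $\framing$ a sheaf with $\framing[1]$ as the last term.
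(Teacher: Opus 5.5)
Your argument is correct and matches the paper, which offers no proof beyond calling the lemma tautological. As you show, both notions reduce to $F\in\fbeta$ and $T\in\tbeta$, and rotating a fiber sequence in $\perf$ is invertible and leaves $F$ and $T$ unchanged. Your identifications $\abetator=\fbeta[1]$ and $\abetatf=\tbeta$ are the ones consistent with the paper's definition of stable pairs; the paper's displayed line introducing this notation has the two swapped, which is a typo.
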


\noindent\textbf{Right Hecke pattern} We introduce the stack of extensions that encodes the right action.

\begin{definition}[Extension of stable copairs]\label{def : co-extension}
    Let $G\in \cohbetass$ and let $\reflectbox{\ensuremath{\operatorname{P}}} = [F\rightarrow T \rightarrow \framing[1]]$ be a stable pair. An extension of $\cop$ by $G$ is a diagram in $\perf$ of the form
\begin{equation}\label{eqn : co-extension}
    \begin{tikzcd}
    0  \arrow[r] & G  \arrow[d] \arrow[r] & F' \arrow[d] \arrow[r] & T' \arrow[d] \\ & 0 \arrow[r] & F \arrow[d] \arrow[r] & T \arrow[d]\\ & & 0 \arrow[r] & \framing\left[1\right] \arrow[d] \\ && & 0
\end{tikzcd}
\end{equation}
all of whose squares are pullbacks. Moreover we require that $\cop'=[F'\rightarrow T'\rightarrow \framing[1]]$ is a stable copair. For an extension $\coext$ of the form \eqref{eqn : co-extension}, we denote
\[
\omega_0(\coext) = \cop \hspace{1cm} \omega_1(\coext) = \cop' \hspace{1cm} u^r_1(\coext) = G\ .  
\]
\end{definition}

\begin{proposition}[Right Hecke pattern]\label{prop : righthecke}
     Consider a diagram of the form \eqref{eqn : co-extension}. Let $G\in \cohbetass$ and let $\cop' = [F' \rightarrow T'\rightarrow \framing[1]]$ be a $\beta$--stable co--pair. Then $\cop = [F \rightarrow T \rightarrow \framing[1]]$ is a $\beta$--stable co--pair in and only if $T \in \coh(X)$.
\end{proposition}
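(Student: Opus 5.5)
The proof mirrors that of Proposition \ref{prop : leftthecke}, with the roles of $\fbeta$ and $\tbeta$ exchanged and working in $\coh(X)$ rather than in $\abeta$. Since $\fbeta,\tbeta\subset\coh(X)$, one implication is immediate: if $\cop$ is a $\beta$--stable copair, then $T\in\tbeta\subset\coh(X)$. For the converse, assume $\cop'$ is $\beta$--stable, $G\in\cohbetass$, and $T\in\coh(X)$. We must show $F\in\fbeta$ and $T\in\tbeta$.

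\emph{Step 1: $F$ is a sheaf in $\fbeta$.} The diagram \eqref{eqn : co-extension} gives a fiber sequence $G\rightarrow F'\rightarrow F$, hence a fiber sequence $F\rightarrow T\rightarrow \framing[1]$ as well. From the latter, $F$ is the fiber of $T\rightarrow\framing[1]$ with $T,\framing$ sheaves. Its standard cohomology satisfies $\mathcal{H}^{-1}(F)=0$ (which lies between $\mathcal{H}^{-2}(\framing[1])=0$ and $\mathcal{H}^{-1}(T)=0$), and $\mathcal{H}^i(F)=0$ for $i\geq 1$. Hence $F\in\coh(X)$, and we have a short exact sequence $0\rightarrow F\rightarrow T\rightarrow\framing\rightarrow 0$. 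Then $G\rightarrow F'\rightarrow F$ has all terms in $\coh(X)$, so it is a short exact sequence of sheaves $0\rightarrow G\rightarrow F'\rightarrow F\rightarrow 0$.

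The delicate point is showing that a quotient of $F'\in\fbeta$ by $G\in\cohbetass$ stays in $\fbeta$, since $\fbeta$ is closed under subobjects but not quotients (Lemma \ref{lemma : torsionproperties}). The plan is to tilt with the tweaked torsion pair \eqref{eqn : tweaked tp}, in the spirit of Remark \ref{rmk : coh ss and topology}. Suppose $F\notin\fbeta$. Then the Harder--Narasimhan filtration of $F$ has a first piece $F_1\subseteq F$ that is semistable with $\mu(F_1)>\beta$. Pull back the extension along $F_1\hookrightarrow F$ to get $0\rightarrow G\rightarrow F'_1\rightarrow F_1\rightarrow 0$ with $F'_1\subseteq F'$. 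Since $G$ is semistable of slope $\beta$ and $F_1$ is semistable of slope $>\beta$, the sheaf $F'_1$ has slope $>\beta$ (if $G\neq 0$, it is a weighted average with a strictly larger term). This contradicts $F'\in\fbeta$. Hence $F\in\fbeta$. The case $F_1$ torsion, i.e. slope $+\infty$, is handled identically, since $F'_1$ then has a torsion part or slope $>\beta$.

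\emph{Step 2: $T\in\tbeta$.} The fiber sequence $F'\rightarrow T'\rightarrow F\ldots$ is not what we want. Instead, use the top row of \eqref{eqn : co-extension}, whose square gives a fiber sequence $G\rightarrow F'\rightarrow T'$ together with $T'\rightarrow T$. By the octahedral axiom, the cofiber of $T'\rightarrow T$ agrees with the cofiber of $F'\rightarrow F$, which is $G[1]$. So $T'\rightarrow T\rightarrow G[1]$ is a fiber sequence, i.e. there is an extension $0\rightarrow G\rightarrow T'\rightarrow T\rightarrow 0$ in $\coh(X)$ after rotating, with $G\rightarrow T'$ the composite $G\rightarrow F'\rightarrow T'$. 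Then $T$ is a quotient of $T'\in\tbeta$, and $\tbeta$ is closed under quotients by Lemma \ref{lemma : torsionproperties}. Hence $T\in\tbeta$.

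I expect Step 1's slope argument to be the main obstacle, together with checking that the fiber sequences coming from the pullback squares really are the ones claimed and are exact in $\coh(X)$. The remaining steps are formal bookkeeping with rotations of triangles.
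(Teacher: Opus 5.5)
Your proof is correct and is essentially the paper's argument. You get $F\in\coh(X)$ from the fiber sequence through $\framing[1]$, read off the short exact sequences $0\to G\to F'\to F\to 0$ and $0\to G\to T'\to T\to 0$, conclude $T\in\tbeta$ by closure under quotients, and prove $F\in\fbeta$ by pulling back a subsheaf of $F$ to $F'$ and using additivity of degree and rank. The only difference is that you apply this to the maximal destabilizing subsheaf by contradiction, whereas the paper applies it to an arbitrary subsheaf $E\subset F$ directly; the tweaked torsion pair you announce is never needed.

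One small slip: rotating $F\to T\to\framing[1]$ gives $0\to\framing\to F\to T\to 0$, not $0\to F\to T\to\framing\to 0$. This does not affect the argument, since you only use $F\in\coh(X)$, and your cohomology computation establishes that correctly.
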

\begin{proof} If $\cop$ is a $\beta$--stable co--pair, then $T$ is a coherent sheaf as prescribed by Definition \ref{def : copairs}. We now prove the other implication. If $T\in \coh(X)$, the fiber sequence 
    \[
        G \rightarrow T' \rightarrow T
    \]
is actually a short exact sequence in $\coh(X)$. Then, the assumption $T'\in \tbeta$ implies that $T\in \tbeta$ thanks to Lemma \ref{lemma : torsionproperties}. The fiber sequence
\[
    \framing \rightarrow F \rightarrow T
\]
induced by the diagram has its extremes in the heart $\coh(X)$. Thus we conclude that $F\in \coh(X)$. This implies that the fiber sequence
\[
    G \rightarrow F'\rightarrow  F
\]
is a short exact sequence in $\coh(X)$. Since $G\in \cohbetass\subset \fbeta$ and $F'\in \fbeta$, we conclude that that $F\in \fbeta$. In fact, let $E\subset F$ be a subsheaf. Then we have an induced short exact sequence 
\[
    0\rightarrow G \rightarrow E'\rightarrow E \rightarrow 0 \ ,
\]
where $\mu(G) = \beta$ and $\mu(E') \leq \beta$. Then we deduce that $\mu(E) \leq \beta$ .
\end{proof}
\noindent In analogy with the discussion in Subsection \ref{subsec : Construction of the left action}, we remark that the above Proposition is the essential ingredient in the proof of the associativity of the right action in Theorem \ref{theo 2}.

\bigskip

\noindent\textbf{Introduction to the section. }We now explain our plan to complete the proof of Theorem \ref{theo 2}. With a similar procedure as in the previous Sections, we define a stack $\copairs$ of stable copairs as in Definition \ref{def : copairs}, and a stack $\segal^r_1\copairs$ parametrizing extensions of stable copairs in the sense of Definition \ref{def : co-extension}. We get in this way a convolution diagram

\begin{equation}\label{eqn : convolution right}
\begin{tikzcd}
	& {\segal^r_1\copairs} &&& {\coext} & \\
	{\copairs \times \cohbetassstack} && \copairs & {(\cop,G)} && {\cop'}
	\arrow["{\omega_0\times u^r_1}", from=1-2, to=2-1]
	\arrow["{\omega_1}"', from=1-2, to=2-3]
	\arrow[from=1-5, to=2-4]
	\arrow[from=1-5, to=2-6]
\end{tikzcd}
\end{equation}
We prove further that the map $\omega_0 \times u_1^r$ is lci, and the map $\omega_1$ is proper. Then, the pull--push operation
 \[
    \omega_{1*} (\omega_0\times u^r_1)^* \colon H^{BM}_*(\copairs) \otimes H^{BM}_*(\cohbetassstack) \longrightarrow H^{BM}_*(\copairs)
 \]
 induces the structure of a right module for the \textit{CoHA} attached to the category $\cohbetass$ on the space $H^{BM}_*(\copairs)$. In particular, the associativity property of this action is encoded in the statement of Proposition \ref{prop : righthecke}. Finally, thanks to Lemma \ref{lemma : pair copair shift} we get a natural isomorphism of derived stacks
 \[
    \copairs \cong \pairs
 \]
 which allows us to get a right action on the space $H^{BM}_*(\pairs)$. We make two remarks explaining the role of the tilting procedure in our construction.
 \begin{enumerate}
     \item From a representation theoretical point of view, we get a left and a right action 
     \[
       \operatorname{HA}_{\cohbetass} \curvearrowright \bigoplus_{r,d}H^{BM}_*(\boldsymbol{\operatorname{P}}^\beta_{r,d}(X,\framing)) \curvearrowleft \operatorname{HA}_{\cohbetass}  
     \]
     where the direct sum in the central term comes from the decomposition in Remark \ref{rmk : topological type of pairs}. We unravel the actions by keeping track of the topological type. The convolution diagram \eqref{eqn : convolution left} induces a fiber sequence $F\rightarrow F'\rightarrow G$. Rotating, we get the short exact sequence in $\coh(X)$
     \begin{equation}\label{eqn : naive 1}
           0 \rightarrow G[-1] \rightarrow F \rightarrow F' \rightarrow 0
     \end{equation}
     where the topological type of $F'$ is smaller than that of $F$. Viceversa, the diagram \eqref{eqn : convolution right} induces a short exact sequence in $\coh(X)$
     \begin{equation}\label{eqn : naive 2}
         0 \rightarrow G \rightarrow F' \rightarrow F \rightarrow 0
     \end{equation}
      where the topological type of $F'$ is bigger than that of $F$. Comparing \eqref{eqn : naive 1} and \eqref{eqn : naive 2} we deduce that operators coming from the \textit{left action decrease the topological type, and operators coming from the right action increase the topological type}. Compare with Example \ref{ex : hilbS}.

      \item\label{itm : coh ss topo prop} From a geometric point of view, working in different t--structures allows us to construct morphisms with the desired properties. For example, the morphism
      \[
       \omega_0 \colon  \segal_1^l\pairs \longrightarrow \pairs, \quad \operatorname{E} \longmapsto \operatorname{P}
      \]
      is not proper, so it cannot be used to induce an operator of right action. This problem is solved by working in a different heart.
 \end{enumerate}

\subsection{Stack of right flags}\noindent We define a derived stack of copairs $\copairs$ parametrizing stable copairs. Following \cite{dps} we apply the left version of Construction I.3.6 to the 2--Segal stack $\waldhausen\perfstack$, and we get a relative 2--Segal space
\begin{equation}
    u^r_{\bullet}\ \colon \ \waldhausen^r\coflags \longrightarrow \waldhausen\perfstack
\end{equation}
As before, closed points in tthe stack $\segal_0\coflags \cong \coflags$ correspond to fiber sequences
\[
    [F \rightarrow T \rightarrow \framing[1]]
\]
in $\perf$ with no further conditions. Moreover, the stack $\segal_1\coflags$ parametrizes diagrams of the form \eqref{eqn : co-extension} without any stability condition. Moreover, we have natural maps
\[
    \begin{tikzcd}
	& {\segal_1^r\coflags} & \\
	{\coflags \times \perfstack} && \coflags
	\arrow["{\omega_0 \times u^r_1}", from=1-2, to=2-1]
	\arrow["{\omega_1}"', from=1-2, to=2-3]
\end{tikzcd}
\]
which agree with the notations in the previous Subsection. 

\bigskip

\noindent We define the derived stack of copairs via the pullback
\[\begin{tikzcd}
	\copairs & \coflags \\
	{\fbetastack \times \tbetastack} & {\perfstack \times \perfstack}
	\arrow[from=1-1, to=1-2]
	\arrow[from=1-1, to=2-1]
	\arrow["{\partial_1\times\partial_0}", from=1-2, to=2-2]
	\arrow[from=2-1, to=2-2]
\end{tikzcd}\]
As an immediate consequence of Lemma \ref{lemma : pair copair shift} we have the following.

\begin{lemma}\label{lemma : pair copair equivalence}
    The self equivalence
    \begin{equation}
        \perfstack \longrightarrow \perfstack
    \end{equation}
    which sends a fiber sequence $E_1 \rightarrow E_2 \rightarrow E_3$ to the fiber sequence $ E_2 \rightarrow E_3 \rightarrow E_1[1]$ induces an equivalence of derived stacks
    \begin{equation}
        \copairs \cong \pairs\ .
    \end{equation}
\end{lemma}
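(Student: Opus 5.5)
The plan is to realize the equivalence as the restriction of a rotation automorphism on the ambient stacks of fiber sequences. Both $\copairs$ and $\pairs$ are defined as pullbacks of stacks of fiber sequences along conditions imposed on two of their terms. So we first build an equivalence between those ambient stacks, and then check that it carries one set of conditions onto the other.

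\textbf{Step 1 (rotation on $\segal_2\perfstack$).} Rotation of fiber sequences in the stable $\infty$--category $\operatorname{Perf}(X\times S)$ is functorial in the derived affine scheme $S$, because pullback along $S'\to S$ is exact. For each $S$ it sends $[E_1\to E_2\to E_3]$ to $[E_2\to E_3\to E_1[1]]$, and it is an equivalence of the $\infty$--groupoid of such sequences, with inverse given by the opposite rotation composed with $[-1]$. This yields a self--equivalence
\[
\rho\colon \segal_2\perfstack\longrightarrow\segal_2\perfstack .
\]
We then track the face maps under $\rho$, using the notation of \eqref{eqn : delta maps}:
\[
\partial_2\rho=\partial_1,\qquad \partial_1\rho=\partial_0,\qquad \partial_0\rho=[1]\circ\partial_2 .
\]

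\textbf{Step 2 (restriction to flags).} Take the fiber of $\partial_2$ over the point $\framing$, which is $\flags$. By the identities above, $\rho$ identifies it with the fiber of $\partial_0$ over $\framing[1]$, which is $\coflags$ as defined in the previous subsection. This gives an equivalence $\flags\simeq\coflags$ over $\perfstack\times\perfstack$. Under it, the pair of maps $(\partial_1,\partial_0)$ on $\flags$ corresponds to the pair $(\partial_1,\partial_0)$ on $\coflags$: the term $F$ goes to $F$ and the term $T$ goes to $T$.

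\textbf{Step 3 (matching the stability conditions).} Recall the two defining pullbacks:
\begin{itemize}
\item $\pairs$ is cut out of $\flags$ by $\partial_1[1]\times\partial_0$ landing in $\abetatorstack\times\abetatfstack$;
\item $\copairs$ is cut out of $\coflags$ by $\partial_1\times\partial_0$ landing in $\fbetastack\times\tbetastack$.
\end{itemize}
By \eqref{eqn : tbeta open} we have $\abetatorstack\cong\fbetastack[1]$ and $\abetatfstack\cong\tbetastack$, compatibly with the embeddings into $\perfstack$ via the shift. Hence the two target substacks of $\perfstack\times\perfstack$ agree after applying $[1]\times\mathrm{id}$, and $[1]$ is an automorphism of $\perfstack$. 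Both defining squares are therefore base changes of equivalent maps along equivalent open substacks, so the pullbacks are equivalent. On points this recovers Lemma \ref{lemma : pair copair shift}.

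The main obstacle is Step 1. We must check carefully that rotation is an equivalence of derived stacks with the correct face--map compatibilities, and not merely a bijection on points. I would handle this by modelling $\segal_2\perfstack$ as $\operatorname{Fun}(\Delta^1,\operatorname{Perf}(X\times S))^{\simeq}$, via the map $E_1\to E_2$, with the cofiber supplying $E_3$. Rotation then becomes the functor sending $f$ to $\operatorname{cof}(f)$ together with the map $E_2\to\operatorname{cof}(f)$. This functor is an equivalence, with inverse given by taking fibers, and it is natural in $S$.
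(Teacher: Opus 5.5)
Your proposal is correct and takes the same route as the paper, which gives no separate argument and simply deduces the equivalence from the pointwise rotation statement of Lemma~\ref{lemma : pair copair shift}; your Steps 1--3 spell out what that leaves implicit: rotation is a self-equivalence of $\segal_2\perfstack$ compatible with the face maps, it carries $\flags$ onto $\coflags$, and it matches the two open conditions via $\abetatorstack\cong\fbetastack[1]$ and $\abetatfstack\cong\tbetastack$. One small slip: by your own identities $\partial_2\rho=\partial_1$ and $\partial_1\rho=\partial_0$, the maps $(\partial_1,\partial_0)$ on $\flags$ correspond to $(\partial_2,\partial_1)$ on $\coflags$, not to $(\partial_1,\partial_0)$, which would pick out $T$ and $\framing[1]$; the paper's defining square for $\copairs$ has the same index typo (compare $\nu_0^{\operatorname{t.f.}}:=\partial_2$ and $\nu_0^{\operatorname{tor}}:=\partial_1$), and your verbal description ``$F$ goes to $F$, $T$ goes to $T$'' is the correct one.
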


\noindent We apply again Construction I.5.5 in \cite{dps} to get a relative simplicial stack
\begin{equation}\label{eqn : relative simplicial copairs}
    u_{\bullet}^r \  \colon \ \waldhausen^r\copairs \longrightarrow \waldhausen\cohbetassstack
\end{equation}
As before, the stack $\segal_1^r\copairs$ parametrizes stable extensions in the sense of Definition \ref{def : co-extension}. In particular, the maps \eqref{eqn : convolution right} are well defined. In light to Proposition \ref{prop : righthecke}, we can use an argument analogous to that of Proposition \ref{prop : left segal} to prove the following.

\begin{proposition}\label{prop : right 2 segal}
    The relative simplicial derived stack \eqref{eqn : relative simplicial copairs} is a relative 2--Segal space.
\end{proposition}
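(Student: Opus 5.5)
We mirror the two-step argument used for Proposition \ref{prop : left segal}, following Proposition III.4.21 in \cite{dps}, with the roles of the hearts exchanged: the auxiliary stack is now cut out by a condition in the standard heart $\coh(X)$ instead of $\abeta$. The starting point is the relative 2--Segal space $u^r_\bullet \colon \waldhausen^r\coflags \to \waldhausen\perfstack$ given by the left version of Construction I.3.6 in \cite{dps}. Once the two auxiliary pullback squares below are in place, Corollary I.5.7 in \cite{dps} upgrades this relative 2--Segal structure to the morphism \eqref{eqn : relative simplicial copairs}.

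\textbf{Step 1.} Define an auxiliary stack $\boldsymbol{\operatorname{Perf}}^{\ddagger}_{0-\coh}(X,\framing[1])$ as the pullback of $\partial_0 \colon \coflags \to \perfstack$ (sending $[F\to T\to \framing[1]]$ to $T$) along the open embedding $\cohflat(X) \to \perfstack$. Applying Construction II.3.2 of \cite{dps} gives a relative simplicial stack over $\waldhausen\cohbetassstack$. We then show that the square with top row $\segal_1^r\boldsymbol{\operatorname{Perf}}^{\ddagger}_{0-\coh}(X,\framing[1]) \to \segal_1^r\coflags$ and vertical maps $\omega_1\times u^r_1$ to $\boldsymbol{\operatorname{Perf}}^{\ddagger}_{0-\coh}(X,\framing[1])\times \cohbetassstack \to \coflags \times \perfstack$ is a pullback. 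The horizontal maps are Zariski open embeddings, by openness of $\cohflat(X)$ and of $\cohbetassstack$ (Lemma 21.12 in \cite{Bayer_2021}). Hence it suffices to check the claim on geometric points: in \eqref{eqn : co-extension}, if $T'\in\coh(X)$ and $G\in\cohbetass$, then $T$ lies in $\coh(X)$. The $T'$ here is the object recorded by $\omega_1$, and we must adjust the choice of which corner is constrained so that the condition propagates along the face map. This propagation needs care: the relevant fiber sequence is $G\to T'\to T$, whose cofiber $T$ has a possible $\mathcal{H}^{-1}$. So the auxiliary condition should be imposed on whichever of $T$, $T'$ makes it automatic. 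Concretely, impose $T\in\coh(X)$ on the $\omega_0$ side. Then $T'$, being an extension of $T$ by the sheaf $G$, is a sheaf, and the square over $\boldsymbol{\operatorname{Perf}}^{\ddagger}_{0-\coh}\times\cohbetassstack$ is a pullback. Proposition II.3.6 of \cite{dps} then gives the relative 2--Segal property for the auxiliary stack.

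\textbf{Step 2.} We show that the square with $\segal_1^r\copairs \to \segal_1^r\boldsymbol{\operatorname{Perf}}^{\ddagger}_{0-\coh}(X,\framing[1])$ on top and the map $\omega_1$ (the stable copair $\cop'$) on both sides is a pullback. The horizontal arrows are again open embeddings, coming from the defining pullback of $\copairs$ and Proposition \ref{prop : open tp}. So we reduce to closed points: given a diagram \eqref{eqn : co-extension} with $G\in\cohbetass$, $\cop'$ stable and $T\in\coh(X)$, we need $\cop$ to be stable. This is exactly Proposition \ref{prop : righthecke}. Combining the two squares shows that $\segal_1^r\copairs \to \copairs\times\cohbetassstack$ is a base change of the auxiliary one, and Corollary I.5.7 in \cite{dps} concludes.

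The main obstacle is Step 1: choosing the auxiliary condition so that it is open and is preserved by the correct face map. The asymmetry with the left case, where the condition was $T\in\abeta$ and membership was inherited from $T$ to $T'$, means we must check which vertex of the 2--simplex carries the inherited condition. If the naive choice fails, the fallback is to impose $T\in\coh(X)$ together with $F\in\coh(X)$. Both conditions are open, and they are closed under extension by $G$ via the long exact cohomology sequence.
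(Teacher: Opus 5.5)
Your proposal follows the paper's argument. The paper only says to rerun the two-step proof of Proposition~\ref{prop : left segal} with Proposition~\ref{prop : righthecke} in place of the left Hecke pattern, and your three ingredients are exactly that transcription: the auxiliary stack cut out by $T\in\coh(X)$, your Step~1 (extension-closedness of $\coh(X)$ along $G\to T'\to T$), and your Step~2 (the right Hecke pattern).

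In the write-up, three things need tidying:
\begin{itemize}
\item State the Step~1 square with $\omega_0\times u^r_1$ from the outset, as your own correction shows.
\item Drop the fallback. Since $\framing\to F\to T$ is a fiber sequence with $\framing$ a sheaf, $T\in\coh(X)$ already forces $F\in\coh(X)$.
\item Be precise in the final sentence. Step~2 makes the square for $\omega_1$ (equivalently $\omega_1\times u^r_1$) cartesian. Along the action map $\omega_0\times u^r_1$, the stack $\segal^r_1\copairs$ is only an open substack of the base change; for instance, the split extension $T'=G\oplus T$ does not lie in $\tbeta$.
\end{itemize}
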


\subsection{Proof of the right action}

\textbf{Proper push--forward. }Proceeding as in Notation III.4.22 in \cite{dps}, we can define a derived stack
\[
    \segal_1^r\boldsymbol{\operatorname{FlagCoh}}^{(1)}_{\operatorname{\beta--ss,t..f.}}(X)
\]
whose closed points are exact sequences $0 \rightarrow G\rightarrow F' \rightarrow F \rightarrow 0$ in $\coh(X)$ where $F, F'\in \fbeta$ and $G\in \cohbetass$. Let us denote as
\[
    \partial_0(G\rightarrow F' \rightarrow F) = F \quad \partial_1(G\rightarrow F' \rightarrow F) = F'\quad \partial_2(G\rightarrow F' \rightarrow F) = G \ .
\]
the forgetful maps.
\begin{lemma}\label{lemma : locally rpas in right action}
    The morphism
    \[
        \partial_1 \colon \segal_1^r\boldsymbol{\operatorname{FlagCoh}}^{(1)}_{\operatorname{\beta--ss,t..f.}}(X) \longrightarrow \fbetastack
    \]
    is locally rpas.
\end{lemma}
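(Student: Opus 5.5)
We mirror the proof of Lemma \ref{lemma : aux rpas}, now working in the standard heart $\coh(X)$ instead of $\abeta$. Fix $F'\in \fbeta$. The fiber $\partial_1^{-1}(F')$ parametrizes short exact sequences $0\rightarrow G\rightarrow F'\rightarrow F\rightarrow 0$ in $\coh(X)$ with $G\in\cohbetass$ and $F\in\fbeta$. Forgetting $G$ and remembering only the quotient $F'\twoheadrightarrow F$ gives a monomorphism
\[
    \partial_1^{-1}(F') \hookrightarrow \operatorname{Quot}(X,F'),
\]
into the classical Grothendieck Quot scheme of $F'$. On a given connected component $\operatorname{Quot}_{r,d}(X,F')$ the topological type of the kernel is fixed, and we only need the components where the kernel has slope exactly $\beta$. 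Since $\operatorname{Quot}_{r,d}(X,F')$ is projective, it suffices to show that the image is closed. This gives representability of $\partial_1$ by algebraic spaces; the same argument applies to $S$--families of $F'$ via the relative Quot scheme, which yields the locally rpas property.

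The key step is to identify the complement of the image inside such a component. For a point $0\rightarrow K\rightarrow F'\rightarrow Q\rightarrow 0$ with $\mu(K)=\beta$, we first note that $K\subset F'\in\fbeta$ forces $K\in\fbeta$ by Lemma \ref{lemma : torsionproperties}. Since $K$ has slope $\beta$, every subsheaf has slope $\le\beta$, so $K$ is automatically semistable of slope $\beta$. Thus the only condition that can fail is $Q\in\fbeta$. We would like to show that this condition is closed, which requires a different argument from Lemma \ref{lemma : aux rpas}, where openness of the bad locus was used. Here, openness of $\fbetastack\to\cohflat(X)$ (Proposition \ref{prop : open tp}) only gives that the good locus is open, which is the wrong direction.

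This is where we use the observation of Remark \ref{rmk : coh ss and topology}, namely working with a degenerating torsion pair. We claim that $Q\in\fbeta$ if and only if $Q\in\widetilde{\tbeta}$-free part fails, and we argue directly through the Harder--Narasimhan filtration. Write $Q_{>\beta}\subset Q$ for the maximal subsheaf with all HN slopes $>\beta$. Its preimage $K'\subset F'$ is an extension of $Q_{>\beta}$ by $K$, so $\mu(K')\le\beta$. Since $\mu(K)=\beta$ and every slope of $Q_{>\beta}$ exceeds $\beta$, this forces $Q_{>\beta}$ to be torsion--free of rank $0$, hence $Q_{>\beta}=0$. So every such point lies in the image. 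In other words, we expect the embedding to be an isomorphism onto the union of the components with kernel slope $\beta$: a subsheaf of slope $\beta$ of an object of $\fbeta$ should automatically have a quotient in $\fbeta$.

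The step we expect to be the main obstacle is this slope argument. We must handle torsion in $Q$ carefully: a torsion subsheaf has slope $+\infty$ and its preimage would have slope $>\beta$, which is a contradiction. We must also make the argument uniform in families, so that the identification holds scheme--theoretically rather than only on closed points. For the latter, we would argue, as in Lemma \ref{lemma : step 2}, that the relevant conditions are open, after which a pointwise check suffices. This yields that the fiber is a union of connected components of a projective Quot scheme, and hence that $\partial_1$ is locally rpas.
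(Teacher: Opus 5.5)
Your proposal is correct and follows the paper's proof. Like the paper, you identify the fiber over $F'$ with the union of connected components of the classical $\operatorname{Quot}(X,F')$ on which the kernel has slope $\beta$, using two facts: a slope-$\beta$ subsheaf of $F'\in\fbeta$ is automatically semistable, and the quotient automatically lies in $\fbeta$. Your $Q_{>\beta}$ argument is the same seesaw argument the paper borrows from Proposition \ref{prop : righthecke}, and you then conclude from properness of the Quot components. The detour about proving closedness of the condition $Q\in\fbeta$, the garbled claim involving $\widetilde{\tbeta}$, and the appeal to Remark \ref{rmk : coh ss and topology} are unnecessary and can be dropped, since that condition holds on all of those components.
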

\begin{proof}
     Let $F'\in \fbeta$. Then the geometric fiber $\partial_1^{-1}(F')$ is a stack parametrizing short exact sequences 
        \begin{equation}\label{eqn : extension in proof}
       0 \rightarrow G\rightarrow F'\rightarrow F\rightarrow 0 \ , 
        \end{equation}
        where $F\in \fbeta$ and $G\in \cohbetass$. On the other hand, any subsheaf  $G \subset F'$ of maximal slope $\beta$ is semistable. Moreover, arguing as in the proof of Proposition \ref{prop : righthecke}, the quotient $F$ in \eqref{eqn : extension in proof} belongs to $\fbeta$. Thus, we have an identification
        \[
            \partial_1^{-1}(F') \cong \{G \in \operatorname{Quot}(X,F') \ \Big| \  \mu(G) = \beta\}
        \]
        where $\operatorname{Quot}(X,F')$ denotes the Quot scheme in the standard heart $\coh(X)$. Since the connected components of Quot--scheme $\operatorname{Quot}(X,F')$ are proper, the claim follows.
\end{proof}

\noindent Consider the map 
\begin{equation}\label{eqn : copair to tf}
    \nu_0^{\operatorname{t.f.}} :=\partial_2 \ \colon \ \copairs \longrightarrow \fbetastack
\end{equation}
which sends a stable copair $[F\rightarrow T \rightarrow \framing[1]]$ to the torsion free component $F$. Moreover, we have a morphism
\begin{equation}
    \nu_1^{\operatorname{t.f.}} \ \colon \ \segal_1^r\copairs \longrightarrow \segal_1^r\boldsymbol{\operatorname{FlagCoh}}^{(1)}_{\operatorname{\beta--ss,t..f.}}(X)
\end{equation}
sending a stable extension \eqref{eqn : co-extension} to the sub--extension $G\rightarrow F'\rightarrow F$.

 \begin{proposition}
            The square
        \begin{equation}\label{eqn : right properness square}
        \begin{tikzcd}
	{\segal_1^r \copairs} & \segal_1^r\boldsymbol{\operatorname{FlagCoh}}^{(1)}_{\operatorname{\beta--ss,t..f.}}(X)\\
	\copairs & \fbetastack
	\arrow[from=1-1, to=1-2 , "\nu_1^{\operatorname{t.f.}}"]
	\arrow[from=1-1, to=2-1, "\omega_1"]
	\arrow[from=1-2, to=2-2, "\partial_1"]
	\arrow[""{name=0, anchor=center, inner sep=0}, from=2-1, to=2-2, "\nu_0^{\operatorname{t.f.}}"]
    \end{tikzcd}
    \end{equation}
    is a pullback. In particular, the left vertical map is locally rpas.
    \end{proposition}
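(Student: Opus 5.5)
The plan mirrors the proof of the left-action analogue (square \eqref{eqn : left properness square}), which in turn follows Lemma III.4.23 in \cite{dps}, with Proposition \ref{prop : righthecke} replacing Proposition \ref{prop : leftthecke}. The argument has four steps.

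\textbf{Step 1: an auxiliary stack.} Consider the stack $\segal_1^r\coflags$ of diagrams \eqref{eqn : co-extension} with no conditions imposed. Both $\segal_1^r\copairs$ and the fiber product
\[
\copairs\times_{\fbetastack}\segal_1^r\boldsymbol{\operatorname{FlagCoh}}^{(1)}_{\operatorname{\beta--ss,t..f.}}(X)
\]
map to it. The first map comes from the defining pullbacks. For the second, a point of the fiber product consists of a stable copair $[F\rightarrow T\rightarrow\framing[1]]$ together with a sequence $G\rightarrow F'\rightarrow F$ glued along $F$. Since $\segal_1^r\coflags$ is built from the 2--Segal stack $\waldhausen\perfstack$, the 2--Segal property lets us reconstruct the whole diagram \eqref{eqn : co-extension}: $T'$ is recovered as the cofiber of $G\rightarrow T$ obtained through $F'$.

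\textbf{Step 2: reduction to closed points.} Both maps into $\segal_1^r\coflags$ are representable by Zariski open embeddings. This follows from Proposition \ref{prop : open tp}, from the openness of $\cohbetassstack$ in $\cohflat(X)$, and from the openness statements in Remark \ref{rmk : openness of flatness}. It therefore suffices to show that the two open substacks have the same closed points.

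\textbf{Step 3: comparing closed points.} One inclusion is immediate. If the diagram \eqref{eqn : co-extension} is a stable extension, then $F\in\fbeta$, $F'\in\fbeta$ and $G\in\cohbetass$, and the sequence $0\rightarrow G\rightarrow F'\rightarrow F\rightarrow 0$ lies in $\coh(X)$. Hence it is a point of the $\boldsymbol{\operatorname{FlagCoh}}$ stack, and $\cop=\omega_0$ of the diagram is stable.

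For the other inclusion, suppose $\cop=[F\rightarrow T\rightarrow\framing[1]]$ is stable and $0\rightarrow G\rightarrow F'\rightarrow F\rightarrow 0$ is exact with $F'\in\fbeta$. We must show that $\cop'=[F'\rightarrow T'\rightarrow\framing[1]]$ is stable. We already have $F'\in\fbeta$, so the content is that $T'\in\tbeta$. Rotating gives a fiber sequence $G\rightarrow T'\rightarrow T$ with $G,T\in\coh(X)$, so $T'$ is a coherent sheaf and this is a short exact sequence. Because $\tbeta$ is not closed under extensions by $\cohbetass$, we need a further argument. Take $T'\twoheadrightarrow Q$, and use the octahedral axiom on $F'\rightarrow T'\rightarrow\framing[1]$ to present $Q$ as a quotient of $T$. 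Then show, via Harder--Narasimhan factors, that every semistable factor of $T'$ has slope $>\beta$.

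\textbf{Main obstacle.} I expect Step 3 to be the hardest part. The most likely failure point is that the octahedral argument may not yield $T'\in\tbeta$. In that case the fix I would try first is to re-express the condition using the fact that $\copairs$ only constrains $F'$ and $T'$, and to check whether $T'\in\tbeta$ is automatic from $F'\in\fbeta$ through the rotated sequence $\framing\rightarrow F'\rightarrow T'$ read in $\abeta$, comparing with Proposition \ref{prop : PairsQuot}.

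\textbf{Step 4: conclusion.} Once the square \eqref{eqn : right properness square} is known to be a pullback, $\omega_1$ is the base change of $\partial_1$ along $\nu_0^{\operatorname{t.f.}}$. Locally rpas morphisms are stable under base change, so Lemma \ref{lemma : locally rpas in right action} shows that $\omega_1$ is locally rpas.
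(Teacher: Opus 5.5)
Your plan has the same shape as the paper's proof: a closed-point check via the right Hecke pattern, as in \cite{dps}, followed by base change of Lemma \ref{lemma : locally rpas in right action}. Step 3 cannot be carried out, however, for two reasons.

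\textbf{You glued the wrong data.} In the square, $\omega_1(\coext)=\cop'=[F'\to T'\to\framing[1]]$ and $\partial_1(G\to F'\to F)=F'$. So a point of the fiber product is a stable $\cop'$ together with $0\to G\to F'\to F\to 0$, glued along the \emph{middle} term $F'$. The 2--Segal equivalence for the triangulation of $[3]$ along the edge $\{0,2\}$ then rebuilds \eqref{eqn : co-extension} uniquely, with $F=\operatorname{cofib}(G\to F')$ and $T=\operatorname{cofib}(G\to T')$.

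You instead pair a stable $\cop$ with the sequence along $F$. That datum does not determine the diagram. There is no map $G\to T$ whose cofiber is $T'$; one would have to lift the class of $F'$ from $\operatorname{Ext}^1(F,G)$ to $\operatorname{Ext}^1(T,G)$. Moreover, your target $T'\in\tbeta$ is false in that setting: the split choice $F'=F\oplus G$, $T'=T\oplus G$ has the quotient $G$ of slope $\beta$. No octahedral argument can fix this.

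\textbf{Even in the correct reading, the closed-point check fails.} What must be shown is that $\cop=[F\to T\to\framing[1]]$ is stable. You are given $F\in\fbeta$, and Proposition \ref{prop : righthecke} reduces the rest to $T\in\coh(X)$. That is the same as injectivity of $G\to T'$, i.e.\ $G\cap\framing=0$ inside $F'$. Nothing in the fiber product supplies this, and it can fail.

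Take $\beta\in\mathbb{Z}$, $\framing=\linebundle$ with $\deg\linebundle<\beta$, $D$ effective of degree $\beta-\deg\linebundle$, and $F'=G=\linebundle(D)$. Then $\cop'=[\linebundle(D)\to\linebundle(D)|_D\to\linebundle[1]]$ is stable, and $0\to G\to G\to 0\to 0$ lies in the flag stack. But the reconstructed $T$ is $\linebundle[1]$. So the square is not a pullback as stated. The paper's proof invokes Proposition \ref{prop : righthecke} without checking its hypothesis $T\in\coh(X)$, so it has the same hole.

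\textbf{For a line bundle the locally rpas conclusion can be rescued.} Suppose $G\neq 0$ and $G\cap\linebundle\neq 0$.
\begin{itemize}
\item $F'/G$ is torsion-free, since torsion would lift to a subsheaf of $F'$ of slope $>\beta$.
\item Hence $\linebundle\subset G$.
\item So $F'/G$ is a quotient of $T'$ and lies in $\tbeta\cap\fbeta=0$.
\end{itemize}
Thus $\segal_1^r\copairs$ is exactly the union of the connected components of the fiber product on which $F\neq 0$. Your Step 4, applied to that union of components, still shows that $\omega_1$ is locally rpas.

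\textbf{For a general coherent $\framing$ even that fails.} Let $\deg M=\beta$, let $a,b\in H^0(M\otimes\linebundle^{-1})$ be linearly independent, and embed $\framing=\linebundle^{\oplus 2}\hookrightarrow M^{\oplus 3}$ by $(x,y)\mapsto(ax,ay,by)$. Over $\cop'=[M^{\oplus 3}\to M^{\oplus 3}/\framing\to\framing[1]]$, the rank-one part of the fiber of $\omega_1$ is $\mathbb{P}^2$ minus the point $[1:0:0]$, which is not proper.
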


    \begin{proof}
        Thanks to Proposition \ref{prop : righthecke}, the proof of the first statement is analogous to that of III.4.32 in \cite{dps}. The second statement follows from the first one and Lemma \ref{lemma : locally rpas in right action}.
    \end{proof}

    \bigskip

    \noindent \textbf{Flat pull--back. } As above, we consider a stack
    \[
\segal_1^r\boldsymbol{\operatorname{FlagCoh}}^{(1)}_{\operatorname{\beta-ss,tor}}(X)
    \]
    whose closed points are short exact sequences in $\coh(X)$ of the form $0 \rightarrow G\rightarrow T' \rightarrow T \rightarrow 0$, where $T , T'\in \tbeta$ and $G\in \cohbetass$. Let us denote as
\[
    \partial_0(G\rightarrow T' \rightarrow T) = T \quad \partial_1(G\rightarrow T' \rightarrow T) = T'\quad \partial_2(G\rightarrow T' \rightarrow T) = G \ .
\]
the forgetful maps.
\begin{lemma}\label{lemma : smoothness right action}
The square
 \begin{equation}\label{eqn : aux square smoothness copairs}
         \begin{tikzcd}
	{\segal_1^r\boldsymbol{\operatorname{FlagCoh}}^{(1)}_{\operatorname{\beta-ss,tor}}(X)} & {\segal_1\cohflat(X)}\\
	\tbetastack \times \cohbetassstack & \cohflat(X) \times \cohflat(X)
	\arrow[from=1-1, to=1-2]
	\arrow[from=1-1, to=2-1]
	\arrow[from=1-2, to=2-2, "\partial_0 \times \partial_2"]
	\arrow[""{name=0, anchor=center, inner sep=0}, from=2-1, to=2-2]
    \end{tikzcd}
    \end{equation}
is a pullback. In particular, the left vertical map is quasi--compact, finitely connected and derived lci.
\end{lemma}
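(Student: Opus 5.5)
Both horizontal arrows in \eqref{eqn : aux square smoothness copairs} are representable by Zariski open embeddings. For the bottom arrow this is Proposition \ref{prop : open tp} together with the openness of \eqref{eqn : cohssbeta embedding}. For the top arrow, the stack $\segal_1^r\boldsymbol{\operatorname{FlagCoh}}^{(1)}_{\operatorname{\beta-ss,tor}}(X)$ is by construction cut out of $\segal_1\cohflat(X)\cong\segal_2\cohflat(X)$ by open conditions on the three terms of the extension. Since all four stacks are open in stacks over $\perfstack$, the derived structures are all pulled back from $\segal_2\cohflat(X)$ and $\cohflat(X)\times\cohflat(X)$. The comparison map from $\segal_1^r\boldsymbol{\operatorname{FlagCoh}}^{(1)}_{\operatorname{\beta-ss,tor}}(X)$ to the fiber product is therefore an open embedding, and I will show it is an equivalence by checking surjectivity on geometric points, as in the proofs of Lemma \ref{lemma : step 1} and Lemma \ref{lemma : step 2}.

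On points, take a short exact sequence $0\rightarrow G\rightarrow T'\rightarrow T\rightarrow 0$ in $\coh(X)$ with $T\in\tbeta$ and $G\in\cohbetass$. Since the defining conditions on $G$ and $T$ already hold, I only need to show $T'\in\tbeta$. This is not a formal consequence of Lemma \ref{lemma : torsionproperties}, because $G$ has slope exactly $\beta$ and so $G\notin\tbeta$; indeed the naive statement fails, for instance when $T=0$ we get $T'=G$. So before going further I must fix the orientation of $\partial_0\times\partial_2$ in the right-hand vertical map. Following the convention $\partial_0(G\rightarrow T'\rightarrow T)=T$ and $\partial_2=G$, I think the intended square is the one from the flat pull-back statement: its left vertical arrow should land in $\tbetastack\times\cohbetassstack$ through $T$ and $G$. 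The correct membership condition on $T'$ is then the one imposed in the definition of $\segal_1^r\boldsymbol{\operatorname{FlagCoh}}^{(1)}_{\operatorname{\beta-ss,tor}}(X)$, possibly weakened (e.g. to $\widetilde{\mathcal T}^\beta$, the torsion class of the tweaked pair \eqref{eqn : tweaked tp}, which is extension closed and contains both $\cohbetass$ and $\tbeta$). Here I would use Remark \ref{rmk : coh ss and topology}: $G$ and $T$ both lie in $\mathcal{T}^{\beta-\epsilon}$ for all small $\epsilon>0$, so $T'\in\mathcal T^{\beta-\epsilon}$ by Lemma \ref{lemma : torsionproperties}. Equivalently, every quotient of $T'$ has slope $\geq\beta$, which says exactly that $T'\in\widetilde{\mathcal T}^\beta$. \textbf{This step is the main obstacle}: the pullback property holds exactly when the condition on $T'$ is the extension-closed one. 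If the stack definition really requires $T'\in\tbeta$, I would enlarge it to $\widetilde{\mathcal T}^\beta$, which is still open by Lemma \ref{lemma : cohbetacirc}-type arguments, and check that the right action only ever uses this enlarged stack.

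Once the square is a pullback, the geometric properties follow by base change. By Theorem \ref{thm : cathabeta}, or more directly by the fact that $\operatorname{Ext}^2$ vanishes on a curve, the map $\partial_0\times\partial_2\colon\segal_2\cohflat(X)\rightarrow\cohflat(X)\times\cohflat(X)$ is derived lci. Its tangent complex at a point is $\mathbb{R}\operatorname{Hom}(T,G)[1]$, which has amplitude $[-1,0]$. This map is also quasi-compact, being a vector bundle stack over each quasi-compact piece, and finitely connected, since its fibers are connected. All three properties are stable under pullback, so the left vertical map of \eqref{eqn : aux square smoothness copairs} inherits them.
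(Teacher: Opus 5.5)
Your objection is correct, and it defeats the statement itself, not just your attempt. With the top-left stack defined by $T,T'\in\tbeta$, the split extension $T'=G\oplus T$ (or $T=0$, $T'=G$) lies in the fiber product of \eqref{eqn : aux square smoothness copairs} but not in the top-left corner, because $G$ is a quotient of $T'$ of slope exactly $\beta$. The paper's proof fails at precisely the point you flag. For a quotient $R$ of the middle term it writes $0\to H\to R\to R'\to 0$, with $H$ the image of $G$ and $R'$ a quotient of the $\tbeta$-term. It then concludes $\mu(R)>\beta$ from $\mu(H)\geq\beta$ and $\mu(R')>\beta$, which is wrong when $R'=0$ (e.g.\ $R=G$ in the split case). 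So the pullback claim cannot be proved. Your argument for the modified square, with middle term in $\widetilde{\mathcal T}^\beta$, is sound, and so is your transfer of lci, quasi-compactness and finite connectedness by base change.

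The part that does not work is your repair. The lemma exists only to make \eqref{eqn : right flatness square} deliver the properties of $\omega_0\times u^r_1$. In that square the middle term must lie in $\tbeta$, since $\segal_1^r\copairs$ requires $\cop'=[F'\to T'\to\framing[1]]$ to be a stable copair. If you enlarge the condition on $T'$ to $\widetilde{\mathcal T}^\beta$, then \eqref{eqn : right flatness square} is no longer a pullback. The split co-extension $F'=F\oplus G$, $T'=T\oplus G$ lies in its fiber product but not in $\segal_1^r\copairs$. So it is false that the right action only uses the enlarged stack; you have just moved the defect one square over.

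What does work is to keep the original definition and weaken the conclusion. The comparison map from the top-left stack to the honest fiber product is not an equivalence but an open embedding, cut out by the open condition that the middle term lies in $\tbeta$ (as in Proposition \ref{prop : open tp}). Hence the left vertical map is an open embedding composed with a base change of $\partial_0\times\partial_2$. It is still derived lci, but quasi-compactness and finite connectedness now have to be checked directly on this open substack rather than inherited by base change. With the original definition, \eqref{eqn : right flatness square} is then a genuine pullback, which is what the right action needs.
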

    \begin{proof}
    Since the horizontal maps are representable by Zariski open embeddings, we check the first statement on closed points. Let $0 \rightarrow G\rightarrow T \rightarrow T'\rightarrow 0$ be a short exact sequence of coherent sheaves on $X$, where $G\in \cohbetass$ and $T'\in \tbeta$. We claim that $T \in \tbeta$. Indeed, any quotient $T \twoheadrightarrow R$ in $\coh(X)$ induces a short exact sequence
    \[
        0 \rightarrow H \rightarrow R \rightarrow R' \rightarrow 0
    \]
    where $\mu(H) \geq \beta$ and $\mu(R') > \beta$, so that $\mu(R) > \beta$. The second statement follows from the first one, and from the fact that the right vertical morphism is derived lci. Finally, the second statement follows from the first one.
    \end{proof}
    
    \noindent Consider the map
    \begin{equation}\label{eqn : copair to tor}
       \nu^{\operatorname{tor}}_0 := \partial_1 \ \colon \ \copairs \longrightarrow \tbetastack 
    \end{equation}
    sending a stable copair $[F\rightarrow T \rightarrow \framing[1]]$ to the torsion component $T$. We also have a natural map
    \begin{equation}
        \nu^{\operatorname{tor}}_{1} \  \colon \ \segal_1^{r}\copairs \longrightarrow \segal_1^r\boldsymbol{\operatorname{FlagCoh}}^{(1)}_{\operatorname{\beta-ss,tor}}(X)
    \end{equation}
    which associates to every extension of copairs $\eqref{eqn : co-extension}$ the sub--extension $G\rightarrow T' \rightarrow T$.

    \begin{proposition}
        The square 
    \begin{equation}\label{eqn : right flatness square}
        \begin{tikzcd}
	{\segal_1^r \copairs} & {\segal_1^r\boldsymbol{\operatorname{FlagCoh}}^{(1)}_{\operatorname{\beta-ss,tor}}(X)} \\
	\copairs \times \cohbetassstack & \tbetastack \times \cohbetassstack
	\arrow[from=1-1, to=1-2 , "\nu_1^{\operatorname{tor}}"]
	\arrow[from=1-1, to=2-1, "\omega_0 \times u^l_1"]
	\arrow[from=1-2, to=2-2, "\partial_0 \times \partial_2"]
	\arrow[""{name=0, anchor=center, inner sep=0}, from=2-1, to=2-2, "\nu_0^{\operatorname{tor}}"]
    \end{tikzcd}
    \end{equation}
    is a pullback. In particular, the left vertical map is quasi--compact, finitely connected and derived lci.
    \end{proposition}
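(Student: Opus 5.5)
The plan is to prove that the square \eqref{eqn : right flatness square} is a pullback by mirroring the left-hand argument (Lemma III.4.23-type reasoning in \cite{dps}). Then the geometric properties of $\omega_0\times u^r_1$ follow by base change from Lemma \ref{lemma : smoothness right action}, since quasi-compactness, finite connectedness and derived lci are all stable under pullback.

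First I reduce to the unconstrained stacks. Write $\segal_1^r\coflags$ as an iterated fiber product of Waldhausen stacks of $\perfstack$. A diagram of the form \eqref{eqn : co-extension} is equivalently determined by three pieces of data:
\begin{enumerate}
\item the bottom copair $[F\rightarrow T\rightarrow \framing[1]]$;
\item the fiber sequence $G\rightarrow T'\rightarrow T$;
\item the compatibility that $F'$ is recovered as the fiber of the composite $T'\rightarrow T\rightarrow \framing[1]$.
\end{enumerate}
In 2--Segal terms this is the identification of $\segal_1^r\coflags$ with $\coflags\times_{\perfstack}\segal_2\perfstack$, glued along $\partial_1=T$ on the left and $\partial_0=T$ on the right. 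It is the analogue, for the relative Waldhausen construction, of the Segal condition $\segal_3\simeq\segal_2\times_{\segal_1}\segal_2$. I will take it from the relative 2--Segal property of $u^r_\bullet\colon\waldhausen^r\coflags\rightarrow\waldhausen\perfstack$ established via Construction I.3.6 in \cite{dps}. Consequently the unconstrained square, with $\segal_1^r\coflags$, $\coflags\times\perfstack$, $\segal_2\perfstack$ and $\perfstack\times\perfstack$ at its corners, is a pullback.

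Next I impose the open conditions. All stacks in \eqref{eqn : right flatness square} are cut out of the unconstrained ones by open embeddings: by Proposition \ref{prop : open tp} for $\fbeta,\tbeta$, and by openness of $\cohbetassstack$. The comparison map from $\segal_1^r\copairs$ to the pullback is therefore an open embedding, and it suffices to check surjectivity on closed points. So take a stable copair $\cop=[F\rightarrow T\rightarrow\framing[1]]$, a sheaf $G\in\cohbetass$, and a short exact sequence $0\rightarrow G\rightarrow T'\rightarrow T\rightarrow 0$ with $T'\in\tbeta$. We must show that the induced $\cop'=[F'\rightarrow T'\rightarrow\framing[1]]$ is stable, i.e. that $F'\in\fbeta$. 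Here $F'$ is an extension $G\rightarrow F'\rightarrow F$ of objects of $\coh(X)$, so $F'\in\coh(X)$. Since both $G$ and $F$ lie in $\fbeta$, Lemma \ref{lemma : torsionproperties} (extension-closedness of the torsion-free part) gives $F'\in\fbeta$. Conversely, points of $\segal_1^r\copairs$ land in the pullback by definition, so the comparison map is surjective on closed points. Together with the open embedding, this shows the square is a pullback.

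I expect the main obstacle to be the first step: making the derived identification of $\segal_1^r\coflags$ with the fiber product precise, with the correct face maps, rather than only at the level of points. I would first try to deduce it directly from the relative 2--Segal condition for \eqref{eqn : relative simplicial copairs} (Proposition \ref{prop : right 2 segal}), following the proof of Lemma III.4.23 and III.4.32 in \cite{dps} verbatim. The point-level verification is routine once this identification is in place.
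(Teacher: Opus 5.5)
Your proposal is correct and is essentially the paper's proof: you reduce to a check on closed points using the 2--Segal decomposition and the openness of all corners. There the only thing to verify is $F'\in\fbeta$, which follows from extension-closedness of $\fbeta$ applied to $0\rightarrow G\rightarrow F'\rightarrow F\rightarrow 0$, and you then get the geometric properties by base change from Lemma \ref{lemma : smoothness right action}. One caveat, which affects the paper equally: the pullback assertion of that lemma is false as stated, since the split extension $G\oplus T$ has the quotient $G$ of slope $\beta$ and so is not in $\tbeta$. The property you actually use still holds, because $\segal_1^r\boldsymbol{\operatorname{FlagCoh}}^{(1)}_{\operatorname{\beta-ss,tor}}(X)$ is an open substack of the honest base change of $\partial_0\times\partial_2\colon\segal_2\cohflat(X)\rightarrow\cohflat(X)\times\cohflat(X)$ to $\tbetastack\times\cohbetassstack$.
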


    \begin{proof}
    In light of Proposition \ref{prop : right 2 segal}, we proceed as in the proof of Lemma III.4.33 in \cite{dps} and we check the first statement on closed points. Unravelling the definitions, this amounts to proving that if we are given a fiber sequence $G\rightarrow F' \rightarrow F$ where $G\in \cohbetass$ and $F\in \fbeta$, then $F'\in \fbeta$. However, such fiber sequence is a short exact sequence in $\coh(X)$, so Lemma \ref{lemma : torsionproperties} implies the claim. The second claim follows from the first one and Lemma \ref{lemma : smoothness right action}.
    \end{proof}

\noindent\textbf{Right action. }In view of what we have already shown, we can apply Corollary II.3.14 in \cite{dps} to get the following.

    \begin{theorem}[Right action]\label{thm : right action}
        Let $\framing$ be a coherent sheaf on $X$. The pro--$\infty$--category $\pro(\copairs)$ carries the structure of a right categorical module over $\coh^{\operatorname{b}}(\cohbetassstack)$. In particular 
      \[
            G_0(\copairs) \hspace{1cm} \text{and} \hspace{1cm} H_*^{BM}(\copairs)
       \]
       have the structure of a right module for $G_0(\cohbetassstack)$ and $H_*^{BM}(\cohbetassstack)$, respectively.
    \end{theorem}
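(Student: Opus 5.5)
The plan is to deduce the Right action Theorem from Corollary II.3.14 in \cite{dps}, applied to the relative simplicial stack $u^r_\bullet \colon \waldhausen^r\copairs \rightarrow \waldhausen\cohbetassstack$ of \eqref{eqn : relative simplicial copairs}. That corollary produces a right categorical module over the $\mathbb{E}_1$--monoidal category of Theorem \ref{thm : cathabeta}. It needs three inputs: the relative 2--Segal property, derived lci of the pullback leg $\omega_0 \times u^r_1$ (together with quasi--compactness and finite connectedness), and locally rpas for the pushforward leg $\omega_1$. Once these hold, the pro--$\pro$ functor on $\operatorname{Corr}^\times(\operatorname{Ind(dGeom^{qc})})_{\operatorname{rpas,lci}}$ gives the module structure. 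The refinement to pro--categories is needed because the connected components of $\copairs$, being cut out by open conditions, need not be quasi--compact.

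The first input is Proposition \ref{prop : right 2 segal}. Its proof mirrors the left case: I would first pass to an auxiliary stack of copairs in which only $T \in \coh(X)$ is imposed, show that the analogue of \eqref{eqn : aux square} is cartesian, and then cut down to stable copairs. The cutting-down step uses Proposition \ref{prop : righthecke} in place of Proposition \ref{prop : leftthecke}, together with Corollary I.5.7 of \cite{dps}. Since all the comparison maps are open embeddings, every cartesianity check reduces to closed points.

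For the second input, I would use the cartesian square \eqref{eqn : right flatness square} to identify $\omega_0 \times u^r_1$ as a base change of $\partial_0 \times \partial_2$ on $\segal_1^r\boldsymbol{\operatorname{FlagCoh}}^{(1)}_{\operatorname{\beta-ss,tor}}(X)$. By Lemma \ref{lemma : smoothness right action}, that map is itself a base change of $\partial_0\times\partial_2 \colon \segal_2\cohflat(X) \rightarrow \cohflat(X)\times\cohflat(X)$, which is derived lci because $\coh(X)$ has homological dimension one. All three properties (derived lci, quasi--compact, finitely connected) are stable under base change. For the third input, the square \eqref{eqn : right properness square} identifies $\omega_1$ with a base change of $\partial_1$, which is locally rpas by Lemma \ref{lemma : locally rpas in right action}. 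That lemma works because the fibres are the unions of connected components of the classical Quot scheme of $F'$ on which the subsheaf has slope $\beta$, and these are proper.

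The main obstacle is the cartesianity of \eqref{eqn : right properness square}, since everything above depends on it. Checking it on points amounts to the following: given a stable copair $\cop'$ and $G\in\cohbetass$ with $G \to F'$, the resulting $\cop$ must automatically be stable. This is exactly the content of Proposition \ref{prop : righthecke}, but only once one knows that $T\in\coh(X)$. So the first thing I would verify is that $T$, as the cofiber of $G\to T'$, is a sheaf. This requires $G\to T'$ to be injective, which follows from $G \to F'$ being injective and $F'\subset T'$.

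Finally, decategorification gives the module structures on $G_0$ and $H^{BM}_*$ via $\omega_{1*}(\omega_0\times u^r_1)^*$. Associativity of these actions is inherited from the relative 2--Segal structure.
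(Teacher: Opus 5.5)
Your route is the paper's: feed Proposition \ref{prop : right 2 segal}, the flatness square and the properness square \eqref{eqn : right properness square} into Corollary II.3.14 of \cite{dps}. However, the step you yourself single out as the crux is argued incorrectly, and the claim it supports is false in general.

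In a stable copair $[F'\to T'\to\framing[1]]$ the objects $\framing, F', T'$ are all sheaves. So the underlying sequence is $0\to\framing\to F'\to T'\to 0$, and $F'\to T'$ is an epimorphism with kernel $\framing$, not an inclusion. Hence $G\to T'$ is injective (equivalently, $T=\operatorname{cofib}(G\to T')$ is a sheaf) if and only if $G\cap\framing=0$ inside $F'$. Nothing in a point $(\cop',\,0\to G\to F'\to F\to 0)$ of the fibre product forces this. Take $X=\mathbb{P}^1$, $\beta=0$, $\framing=\mathcal{O}(-1)$, $F'=\mathcal{O}^{\oplus 2}$, $T'=\mathcal{O}(1)$ (Euler sequence), $G=F'$ and $F=0$. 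Then $T=\framing[1]\notin\tbeta$, so \eqref{eqn : right properness square} is not cartesian. The paper's one-line justification has the same hole. Proposition \ref{prop : righthecke} takes $T\in\coh(X)$ as a hypothesis. In the left case the flag $T\to T'\to G$ records $T\in\abeta$, but the flag $G\to F'\to F$ used here does not record anything about $T$. So you have inherited the gap rather than closed it.

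What is true is that $\segal_1^r\copairs$ is the \emph{open} locus $\{G\cap\framing=0\}$ of the fibre product, so $\omega_1$ is locally rpas only if this locus is also closed.

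For $\framing$ torsion free of rank one (the case needed for $\Quotbetaone$), it is closed. If $G\cap\framing\neq0$, then $\framing/(G\cap\framing)$ is torsion and injects into the torsion free sheaf $F=F'/G\in\fbeta$, so $\framing\subset G$. Then $F$ is a quotient of $T'\in\tbeta$, which forces $F=0$. The bad locus is therefore the union of components where $F$ has class $(0,0)$, which is open and closed.

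For higher rank this fails. On $\mathbb{P}^1$ with $\beta=0$, let $\framing=\mathcal{O}(-1)^{\oplus2}\hookrightarrow\mathcal{O}^{\oplus4}=F'$ be the sum of two Euler sequences, so $T'=\mathcal{O}(1)^{\oplus2}$ and $\cop'$ is stable. The rank-two $G$ are $W\otimes\mathcal{O}$ with $W\in\operatorname{Gr}(2,4)$, all with $F\cong\mathcal{O}^{\oplus2}\in\fbeta$. For $W=\langle e_1,e_3\rangle$ one checks $G\cap\framing=0$. For $W=\langle e_1,e_2\rangle$, $G$ contains the first copy of $\mathcal{O}(-1)$. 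Thus $\omega_1^{-1}(\cop')$ contains, as an open and closed piece, $\operatorname{Gr}(2,4)\setminus Z$ with $Z$ a nonempty proper closed subset. This piece is not proper, so $\omega_1$ is not locally rpas.

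So for an arbitrary coherent $\framing$ the rpas input is established neither by your argument nor by the paper's. You need to restrict $\framing$, and replace cartesianity by an open-and-closed inclusion into the fibre product.

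A smaller, harmless point: the square in Lemma \ref{lemma : smoothness right action} is not literally cartesian either. The split extension $G\oplus T$ has the quotient $G$ of slope $\beta$, so it lies outside $\tbeta$. There, though, the stack in question is an open substack of the fibre product, so derived lci, quasi-compactness and finite connectedness still pass to $\omega_0\times u_1^r$.
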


   \noindent Under the natural isomorphism of Lemma \ref{lemma : pair copair shift}, we finally get both left and right operators on the same space. 

   \begin{corollary}[Left and right action]
       Let $\framing$ be a coherent sheaf on $X$. The pro--$\infty$--category $\pro(\pairs)$ carries the structure of a left and right categorical module over $\coh^{\operatorname{b}}(\cohbetassstack)$. In particular 
      \[
            G_0(\pairs) \hspace{1cm} \text{and} \hspace{1cm} H_*^{BM}(\pairs)
       \]
       have the structure of a left and right module for $G_0(\cohbetassstack)$ and $H_*^{BM}(\cohbetassstack)$, respectively.
   \end{corollary}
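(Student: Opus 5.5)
The plan is to assemble the corollary from the two theorems already proved, Theorem \ref{thm : left action} and Theorem \ref{thm : right action}, by transporting the right module structure along the equivalence of Lemma \ref{lemma : pair copair equivalence}.

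First, I check the hypotheses. Since $\framing$ is a coherent sheaf, it is in particular an object of $\perf$. Hence Theorem \ref{thm : left action} applies directly and gives a left categorical module structure on $\pro(\pairs)$ over $\coh^{\operatorname{b}}(\cohbetassstack)$. Under the same hypothesis, Theorem \ref{thm : right action} gives a right categorical module structure on $\pro(\copairs)$.

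Second, I transport the right structure. Lemma \ref{lemma : pair copair equivalence}, whose content at the level of points is Lemma \ref{lemma : pair copair shift}, gives an equivalence of derived stacks $\copairs \cong \pairs$ induced by rotating fiber sequences. An equivalence of derived stacks is in particular an equivalence in $\operatorname{Corr}^\times(\operatorname{Ind(dGeom^{qc})})_{\operatorname{rpas,lci}}$, and in fact it can be taken to be the identity. So applying $\pro$ yields an equivalence of pro--stable $\infty$--categories $\pro(\copairs)\simeq\pro(\pairs)$. Module structures transport along equivalences, so $\pro(\pairs)$ acquires a right module structure. Concretely, one conjugates the correspondence $\segal^r_1\copairs$ by the rotation equivalence, producing a correspondence from $\pairs\times\cohbetassstack$ to $\pairs$ whose pull--push defines the right action.

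Third, I decategorify. Applying $G_0$ and $H^{BM}_*$ to the module structures on the categorical level, exactly as in the "in particular" clauses of Theorems \ref{thm : left action} and \ref{thm : right action}, gives left and right modules over $G_0(\cohbetassstack)$ and $H^{BM}_*(\cohbetassstack)$. These are functorial invariants, so the equivalence induces isomorphisms $G_0(\copairs)\cong G_0(\pairs)$ and $H^{BM}_*(\copairs)\cong H^{BM}_*(\pairs)$ compatible with the actions.

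The only delicate point, and the one I regard as the main obstacle, is making precise that the right module structure is really transported, rather than merely existing on an isomorphic object. This means checking that the relative $2$--Segal object $\waldhausen^r\copairs\to\waldhausen\cohbetassstack$ can be replaced by an equivalent relative $2$--Segal object over $\pairs$ at the zeroth level. I would handle this by precomposing the simplicial stack with the rotation equivalence in level zero, and noting that relative $2$--Segal objects and the correspondence of Corollary 5.4 in \cite{godicke2024infty} are invariant under equivalences of simplicial objects.

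The statement does not claim any compatibility between the left and right actions, such as a bimodule structure. So no commutation argument is needed.
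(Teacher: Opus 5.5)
Your proposal is correct and follows the paper's argument. The paper obtains the corollary simply by combining Theorem \ref{thm : left action} and Theorem \ref{thm : right action} through the equivalence $\copairs \cong \pairs$ of Lemma \ref{lemma : pair copair equivalence}, exactly as you do. One aside is inaccurate but harmless: the rotation equivalence cannot ``be taken to be the identity''. Your transport argument does not need this, since any equivalence of derived stacks is already invertible in the correspondence category.
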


   \noindent Finally, if we specialize to the case where $\framing = \linebundle[1]$ is a line bundle in $\fbeta$. Since the quot space $\Quotbetaone$ has proper connected components, we can consider the $\infty$--category $\coh^{\operatorname{b}}(\Quotbetaone)$. Thanks to the isomorphism \eqref{eqn : QuotPairs} we get a two--sided categorical action on $\coh^{\operatorname{b}}(\Quotbetaone)$. 

   \begin{corollary}
       Let $\linebundle$ be a line bundle of slope smaller that $\beta$. Then the pro--$\infty$--category \newline
       \noindent$\coh^{\operatorname{b}}(\Quotbetaone)$ carries the structure of a left and right categorical module over $\coh^{\operatorname{b}}(\cohbetassstack)$. In particular 
      \[
            G_0(\Quotbetaone) \hspace{1cm} \text{and} \hspace{1cm} H_*^{BM}(\Quotbetaone)
       \]
       have the structure of a left and right module for $G_0(\cohbetassstack)$ and $H_*^{BM}(\cohbetassstack)$, respectively.
   \end{corollary}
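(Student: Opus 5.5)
The plan is to deduce the final Corollary from the preceding Corollary (left and right action on $\pro(\pairs)$), specialized to the framing $\linebundle$. We then transport the structure along the identification \eqref{eqn : QuotPairs}. Here the relevant framing for the pairs is the line bundle $\linebundle$ itself, a coherent sheaf lying in $\fbeta$ since $\deg\linebundle<\beta$. So both Theorem \ref{thm : left action} and Theorem \ref{thm : right action} apply, together with the copair/pair equivalence of Lemma \ref{lemma : pair copair equivalence}. This yields a two--sided categorical module structure on $\pro(\pairslb)$ over $\coh^{\operatorname{b}}(\cohbetassstack)$.

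The next step is to identify $\pro(\pairslb)$ with $\coh^{\operatorname{b}}(\Quotbetaone)$. By Proposition \ref{prop : PairsQuot}, the classical truncation of $\pairslb$ is $\Quotbetaone$. Since $\beta\in\BQ$, $\abeta$ is noetherian (Lemma \ref{lemma : notherianity}), and the Quot space satisfies the existence and uniqueness parts of the valuative criterion. Hence each component $\Quotbetaonerd$, being an algebraic space of finite type, is proper. Finite type should follow from the open embedding of Proposition \ref{prop : BradlowQuot} into the projective coarse space $\bradlb$, which gives quasi--compactness of each component.

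Quasi--compactness of the connected components of $\pairslb$ then makes the pro--structure trivial componentwise: the pro--category $\pro(\pairslb)$ reduces to the honest product over $(r,d)$ of $\coh^{\operatorname{b}}(\boldsymbol{\operatorname{P}}^\beta_{r,d}(X,\linebundle))$. Bounded coherent sheaves on a derived stack depend on the underlying classical truncation only up to the nilpotent thickening. At the level of $G_0$ and $H^{BM}_*$, the derived and classical versions agree. The decategorified statements therefore transfer directly to $G_0(\Quotbetaone)$ and $H^{BM}_*(\Quotbetaone)$.

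For the categorical statement, I would interpret $\coh^{\operatorname{b}}(\Quotbetaone)$ as the category on the derived enhancement $\pairslb$. Equivalently, I would pull back the module structure along the identification of underlying classical spaces. I expect the main obstacle to be this last point: making precise that the action functors are compatible with passing between the derived stack $\pairslb$ and its truncation. The existence of the actions is already settled by the earlier theorems, so no further geometric input is needed.
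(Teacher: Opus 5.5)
Your proposal is correct and follows the paper's own route. Like the paper, you specialize the two--sided action on $\pro(\pairslb)$ to the framing $\linebundle$ and transport it along the identification of $\Quotbetaone$ with the classical truncation of $\pairslb$, using properness of the components, which the paper merely asserts and you justify via the valuative criterion and the open embedding into $\bradlb$. The obstacle you flag also disappears. At a point $[\linebundle\to F\to T]$ the tangent complex of $\pairslb$ is $\mathbb{R}\operatorname{Hom}_X(T,F)[1]$, with $\operatorname{Hom}(T,F)=0$ by the torsion pair and $\operatorname{Ext}^2(T,F)=0$ on a curve. So $\pairslb$ is smooth, hence classical, and already coincides with $\Quotbetaone$ as a derived stack, which makes the categorical transfer literal.
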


\newpage

\bibliographystyle{abbrv}
\bibliography{biblio}

@misc{khan1,
    author = {Adeel A. Khan},
    title = {Weaves},
    journal = {Available on the author's webpage},
    year = {2023},
    howpublished = "\url{https://www.preschema.com/papers/weaves.pdf}",
}

@article{khan2019virtualfundamentalclassesderived,
      title={Virtual fundamental classes of derived stacks I}, 
      author={Adeel A. Khan},
      year={2019},
      eprint={1909.01332},
      archivePrefix={arXiv},
      primaryClass={math.AG},
      doi={https://arxiv.org/abs/1909.01332}, 
}

@book{Huybrechts_Lehn_2010, place={Cambridge}, edition={2}, series={Cambridge Mathematical Library}, title={The Geometry of Moduli Spaces of Sheaves}, publisher={Cambridge University Press}, author={Huybrechts, Daniel and Lehn, Manfred}, year={2010}, collection={Cambridge Mathematical Library}}

@article {MR3322196,
    AUTHOR = {Negut, Andrei},
     TITLE = {Moduli of flags of sheaves and their {$K$}-theory},
   JOURNAL = {Algebr. Geom.},
  FJOURNAL = {Algebraic Geometry},
    VOLUME = {2},
      YEAR = {2015},
    NUMBER = {1},
     PAGES = {19--43},
      ISSN = {2313-1691,2214-2584},
   MRCLASS = {14J60 (14D21 14F05 19E08)},
  MRNUMBER = {3322196},
MRREVIEWER = {P.\ E.\ Newstead},
       DOI = {10.14231/AG-2015-002},
       URL = {https://doi.org/10.14231/AG-2015-002},
}

@article{Zhao_2020,
   title={On the K--Theoretic Hall Algebra of a Surface},
   volume={2021},
   ISSN={1687-0247},
   url={http://dx.doi.org/10.1093/imrn/rnaa123},
   DOI={10.1093/imrn/rnaa123},
   number={6},
   journal={International Mathematics Research Notices},
   publisher={Oxford University Press (OUP)},
   author={Zhao, Yu},
   year={2020},
    pages={4445–4486} }

@article {porta2016higheranalyticstacksgaga,
    AUTHOR = {Porta, Mauro and Yu, Tony Yue},
     TITLE = {Higher analytic stacks and {GAGA} theorems},
   JOURNAL = {Adv. Math.},
  FJOURNAL = {Advances in Mathematics},
    VOLUME = {302},
      YEAR = {2016},
     PAGES = {351--409},
      ISSN = {0001-8708,1090-2082},
   MRCLASS = {14A20 (14F05 14G22 32C35)},
  MRNUMBER = {3545934},
MRREVIEWER = {Hsian-Hua\ Tseng},
       DOI = {10.1016/j.aim.2016.07.017},
       URL = {https://doi.org/10.1016/j.aim.2016.07.017},
}

@misc{lurie2008highertopostheory,
      title={Higher Topos Theory}, 
      author={Jacob Lurie},
      year={2008},
      eprint={math/0608040},
      archivePrefix={arXiv},
      primaryClass={math.CT},
      url={https://arxiv.org/abs/math/0608040}, 
}

@article {Lin_2018,
    AUTHOR = {Lin, Yinbang},
     TITLE = {Moduli spaces of stable pairs},
   JOURNAL = {Pacific J. Math.},
  FJOURNAL = {Pacific Journal of Mathematics},
    VOLUME = {294},
      YEAR = {2018},
    NUMBER = {1},
     PAGES = {123--158},
      ISSN = {0030-8730,1945-5844},
   MRCLASS = {14D20 (14J60 14N35)},
  MRNUMBER = {3743369},
MRREVIEWER = {\'Ad\'am\ Gyenge},
       DOI = {10.2140/pjm.2018.294.123},
       URL = {https://doi.org/10.2140/pjm.2018.294.123},
}

@misc{kaushik2026cohomologyhyperquotschemescurves,
      title={The cohomology of Hyperquot schemes on curves via shifted Yangians in type A}, 
      author={Archi Kaushik},
      year={2026},
      eprint={2603.16691},
      archivePrefix={arXiv},
      primaryClass={math.AG},
      url={https://arxiv.org/abs/2603.16691}, 
}

@article {10.4134/JKMS.J230331,
    AUTHOR = {Lin, Yinbang and Wang, Sz-Sheng and Xia, Bingyu},
     TITLE = {Decorated sheaves and morphisms in tilted hearts},
   JOURNAL = {J. Korean Math. Soc.},
  FJOURNAL = {Journal of the Korean Mathematical Society},
    VOLUME = {61},
      YEAR = {2024},
    NUMBER = {6},
     PAGES = {1073--1093},
      ISSN = {0304-9914,2234-3008},
   MRCLASS = {14F08 (14D20 14D22 14D23)},
  MRNUMBER = {4816589},
MRREVIEWER = {Nicolae\ Manolache},
       DOI = {10.4134/JKMS.j230331},
       URL = {https://doi.org/10.4134/JKMS.j230331},
}

@incollection {bridgeland2020hallalgebrascurvecountinginvariants,
    AUTHOR = {Bridgeland, Tom},
     TITLE = {Hall algebras and {D}onaldson-{T}homas invariants},
 BOOKTITLE = {Algebraic geometry: {S}alt {L}ake {C}ity 2015},
    SERIES = {Proc. Sympos. Pure Math.},
    VOLUME = {97.1},
     PAGES = {75--100},
 PUBLISHER = {Amer. Math. Soc., Providence, RI},
      YEAR = {2018},
      ISBN = {978-1-4704-3577-6},
   MRCLASS = {14N35 (14J32)},
  MRNUMBER = {3821146},
MRREVIEWER = {Felix\ Janda},
       DOI = {10.1090/pspum/097.1/01670},
       URL = {https://doi.org/10.1090/pspum/097.1/01670},
}

@article{Pandharipande_2009,
   title={Curve counting via stable pairs in the derived category},
   volume={178},
   ISSN={1432-1297},
   url={http://dx.doi.org/10.1007/s00222-009-0203-9},
   DOI={10.1007/s00222-009-0203-9},
   number={2},
   journal={Inventiones mathematicae},
   publisher={Springer Science and Business Media LLC},
   author={Pandharipande, R. and Thomas, R. P.},
   year={2009},
   month=May, pages={407–447} }

@book{gaitsgory2017study,
  title={A Study in Derived Algebraic Geometry},
  author={Gaitsgory, D. and Rozenblyum, N.},
  number={Bd. 1},
  isbn={9781470435691},
  lccn={2016054809},
  series={A Study in Derived Algebraic Geometry},
  url={https://books.google.ch/books?id=b3srDwAAQBAJ},
  year={2017},
  publisher={American Mathematical Society}
}

@article{godicke2024infty,
  title={An $\infty$-Category of 2-Segal Spaces},
  author={G{\"o}dicke, Jonte},
  journal={arXiv preprint arXiv:2407.13357},
  year={2024}
}

@misc{marian2026cohomologyquotschemesmooth,
      title={The cohomology of the Quot scheme on a smooth curve as a Yangian representation}, 
      author={Alina Marian and Andrei Neguţ},
      year={2026},
      eprint={2307.13671},
      archivePrefix={arXiv},
      primaryClass={math.AG},
      url={https://arxiv.org/abs/2307.13671}, 
}

@article{shatz,
author = {Stephen S. Shatz},
title = {{Degeneration and specialization in algebraic families of vector bundles}},
volume = {82},
journal = {Bulletin of the American Mathematical Society},
number = {4},
publisher = {American Mathematical Society},
pages = {560 -- 562},
year = {1976},
}

@book{Dyckerhoff_2019,
   title={Higher Segal Spaces},
   ISBN={9783030271244},
   ISSN={1617-9692},
   url={http://dx.doi.org/10.1007/978-3-030-27124-4},
   DOI={10.1007/978-3-030-27124-4},
   journal={Lecture Notes in Mathematics},
   publisher={Springer International Publishing},
   author={Dyckerhoff, Tobias and Kapranov, Mikhail},
   year={2019} }

@article{Bayer_2021,
   title={Stability conditions in families},
   volume={133},
   ISSN={1618-1913},
   url={http://dx.doi.org/10.1007/s10240-021-00124-6},
   DOI={10.1007/s10240-021-00124-6},
   journal={Publications Mathématiques de l’IHÉS},
   publisher={MathDoc/Centre Mersenne},
   author={Bayer, Arend and Lahoz, Martí and Macrì, Emanuele and Nuer, Howard and Perry, Alexander and Stellari, Paolo},
   year={2021},
   month=jun, pages={157–325} }

@article{Rota_2021,
   title={Some Quot schemes in tilted hearts and moduli spaces of stable pairs},
   volume={32},
   ISSN={1793-6519},
   url={http://dx.doi.org/10.1142/S0129167X21500981},
   DOI={10.1142/s0129167x21500981},
   number={13},
   journal={International Journal of Mathematics},
   publisher={World Scientific Pub Co Pte Ltd},
   author={Rota, Franco},
   year={2021},
   month=oct }

@incollection {schiffmann2009lectureshallalgebras,
    AUTHOR = {Schiffmann, Olivier},
     TITLE = {Lectures on {H}all algebras},
 BOOKTITLE = {Geometric methods in representation theory. {II}},
    SERIES = {S\'emin. Congr.},
    VOLUME = {24-II},
     PAGES = {1--141},
 PUBLISHER = {Soc. Math. France, Paris},
      YEAR = {2012},
      ISBN = {978-2-85629-361-4},
   MRCLASS = {18E30 (13C60 17B37 17B67)},
  MRNUMBER = {3202707},
MRREVIEWER = {Xueqing\ Chen},
}

@misc{mmsv23,
      title={Coherent sheaves on surfaces, COHAs and deformed $W_{1+\infty}$-algebras}, 
      author={Anton Mellit and Alexandre Minets and Olivier Schiffmann and Eric Vasserot},
      year={2023},
      eprint={2311.13415},
      archivePrefix={arXiv},
      primaryClass={math.AG},
      url={https://arxiv.org/abs/2311.13415}, 
}

@misc{dhm,
      title={BPS algebras and generalised Kac-Moody algebras from 2-Calabi-Yau categories}, 
      author={Ben Davison and Lucien Hennecart and Sebastian Schlegel Mejia},
      year={2025},
      eprint={2303.12592},
      archivePrefix={arXiv},
      primaryClass={math.RT},
      url={https://arxiv.org/abs/2303.12592}, 
}

@incollection {nakajimaheisenbergalgebrahilbertschemes,
    AUTHOR = {Nakajima, Hiraku},
     TITLE = {Hilbert schemes of points on surfaces and {H}eisenberg
              algebras [translation of {S}\=ugaku {\bf 50} (1998), no.\ 4,
              385--398; MR1 690 690]},
      NOTE = {Sugaku expositions},
   JOURNAL = {Sugaku Expositions},
  FJOURNAL = {Sugaku Expositions},
    VOLUME = {15},
      YEAR = {2002},
    NUMBER = {2},
     PAGES = {207--222},
      ISSN = {0898-9583,2473-585X},
   MRCLASS = {14C05 (17B65)},
  MRNUMBER = {1944136},
MRREVIEWER = {I.\ Dolgachev},
}

@article {grojnowski1995instantonsaffinealgebrasi,
    AUTHOR = {Grojnowski, I.},
     TITLE = {Instantons and affine algebras. {I}. {T}he {H}ilbert scheme
              and vertex operators},
   JOURNAL = {Math. Res. Lett.},
  FJOURNAL = {Mathematical Research Letters},
    VOLUME = {3},
      YEAR = {1996},
    NUMBER = {2},
     PAGES = {275--291},
      ISSN = {1073-2780},
   MRCLASS = {14J60 (14C05 14J26 17B69)},
  MRNUMBER = {1386846},
MRREVIEWER = {I.\ Dolgachev},
       DOI = {10.4310/MRL.1996.v3.n2.a12},
       URL = {https://doi.org/10.4310/MRL.1996.v3.n2.a12},
}

@article{neguţ2022heckecorrespondencessmoothmoduli,
    AUTHOR = {Negu\c t, Andrei},
     TITLE = {Hecke correspondences for smooth moduli spaces of sheaves},
   JOURNAL = {Publ. Math. Inst. Hautes \'Etudes Sci.},
  FJOURNAL = {Publications Math\'ematiques. Institut de Hautes \'Etudes
              Scientifiques},
    VOLUME = {135},
      YEAR = {2022},
     PAGES = {337--418},
      ISSN = {0073-8301,1618-1913},
   MRCLASS = {14C05 (14F08 17B67)},
  MRNUMBER = {4426742},
MRREVIEWER = {Shintarou\ Yanagida},
       DOI = {10.1007/s10240-022-00131-1},
       URL = {https://doi.org/10.1007/s10240-022-00131-1},
}

@misc{diaconescuhallalgebrasonedimensional,
      title={Cohomological Hall algebras of one-dimensional sheaves on surfaces and Yangians}, 
      author={Duiliu-Emanuel Diaconescu and Mauro Porta and Francesco Sala and Olivier Schiffmann and Eric Vasserot},
      year={2025},
      eprint={2502.19445},
      archivePrefix={arXiv},
      primaryClass={math.AG},
      url={https://arxiv.org/abs/2502.19445}, 
}

@article {kapranov2022cohomologicalhallalgebrasurface,
    AUTHOR = {Kapranov, Mikhail and Vasserot, Eric},
     TITLE = {The cohomological {H}all algebra of a surface and
              factorization cohomology},
   JOURNAL = {J. Eur. Math. Soc. (JEMS)},
  FJOURNAL = {Journal of the European Mathematical Society (JEMS)},
    VOLUME = {25},
      YEAR = {2023},
    NUMBER = {11},
     PAGES = {4221--4289},
      ISSN = {1435-9855,1435-9863},
   MRCLASS = {14F08 (18G35 18M75 19L99)},
  MRNUMBER = {4662292},
MRREVIEWER = {Alexandre\ Minets},
       DOI = {10.4171/jems/1264},
       URL = {https://doi.org/10.4171/jems/1264},
}

@article {Minets_2020,
    AUTHOR = {Minets, Alexandre},
     TITLE = {Cohomological {H}all algebras for {H}iggs torsion sheaves,
              moduli of triples and sheaves on surfaces},
   JOURNAL = {Selecta Math. (N.S.)},
  FJOURNAL = {Selecta Mathematica. New Series},
    VOLUME = {26},
      YEAR = {2020},
    NUMBER = {2},
     PAGES = {Paper No. 30, 67},
      ISSN = {1022-1824,1420-9020},
   MRCLASS = {14F08 (14D23 14H60 17B37)},
  MRNUMBER = {4090584},
MRREVIEWER = {P.\ E.\ Newstead},
       DOI = {10.1007/s00029-020-00553-x},
       URL = {https://doi.org/10.1007/s00029-020-00553-x},
}

@article{ss20,
   title={Cohomological Hall algebra of Higgs sheaves on a curve},
   ISSN={2214-2584},
   url={http://dx.doi.org/10.14231/AG-2020-010},
   DOI={10.14231/ag-2020-010},
   journal={Algebraic Geometry},
   publisher={Foundation Compositio Mathematica},
   author={Sala, Francesco and Schiffmann, Olivier Schiffmann},
   year={2020},
   month=may, pages={346–376} }

@article {bradlow1993birationalequivalencesvortexmoduli,
    AUTHOR = {Bradlow, Steven B. and Daskalopoulos, Georgios D. and
              Wentworth, Richard A.},
     TITLE = {Birational equivalences of vortex moduli},
   JOURNAL = {Topology},
  FJOURNAL = {Topology. An International Journal of Mathematics},
    VOLUME = {35},
      YEAR = {1996},
    NUMBER = {3},
     PAGES = {731--748},
      ISSN = {0040-9383},
   MRCLASS = {32G13 (32L07 32L10 53C07 58D27)},
  MRNUMBER = {1396775},
MRREVIEWER = {Nicholas\ Buchdahl},
       DOI = {10.1016/0040-9383(95)00041-0},
       URL = {https://doi.org/10.1016/0040-9383(95)00041-0},
}

@article{bradlow1,
title = "Special metrics and stability for holomorphic bundles with global sections",
author = "Bradlow, Steven",
year = "1991",
month = jan,
doi = "10.4310/jdg/1214446034",
language = "English (US)",
volume = "33",
pages = "169--213",
journal = "Journal of Differential Geometry",
issn = "0022-040X",
publisher = "International Press, Inc.",
number = "1",
}

@article {bridgeland2006stabilityconditionstriangulatedcategories,
    AUTHOR = {Bridgeland, Tom},
     TITLE = {Stability conditions on triangulated categories},
   JOURNAL = {Ann. of Math. (2)},
  FJOURNAL = {Annals of Mathematics. Second Series},
    VOLUME = {166},
      YEAR = {2007},
    NUMBER = {2},
     PAGES = {317--345},
      ISSN = {0003-486X,1939-8980},
   MRCLASS = {14F05 (18E30)},
  MRNUMBER = {2373143},
MRREVIEWER = {Leovigildo\ M.\ Alonso Tarrio},
       DOI = {10.4007/annals.2007.166.317},
       URL = {https://doi.org/10.4007/annals.2007.166.317},
}

@misc{jardim2025stabilityconditionscoherentsystems,
      title={Stability conditions for coherent systems on Integral Curves}, 
      author={Marcos Jardim and Leonardo Roa-Leguizamón and Renato Vidal Martins},
      year={2025},
      eprint={2511.12610},
      archivePrefix={arXiv},
      primaryClass={math.AG},
      url={https://arxiv.org/abs/2511.12610}, 
}

@misc{dps,
      title={Cohomological Hall algebras, their categorification, and their representations via torsion pairs}, 
      author={Duiliu-Emanuel Diaconescu and Mauro Porta and Francesco Sala},
      year={2025},
      eprint={2207.08926},
      archivePrefix={arXiv},
      primaryClass={math.AG},
      url={https://arxiv.org/abs/2207.08926}, 
}

@article {macri2019lecturesbridgelandstability,
    AUTHOR = {Macrì, Emanuele and Schmidt, Benjamin},
     TITLE = {Stability and applications},
   JOURNAL = {Pure Appl. Math. Q.},
  FJOURNAL = {Pure and Applied Mathematics Quarterly},
    VOLUME = {17},
      YEAR = {2021},
    NUMBER = {2},
     PAGES = {671--702},
      ISSN = {1558-8599,1558-8602},
   MRCLASS = {14F08 (14D20 14J60)},
  MRNUMBER = {4257598},
       DOI = {10.4310/PAMQ.2021.v17.n2.a5},
       URL = {https://doi.org/10.4310/PAMQ.2021.v17.n2.a5},
}

@misc{botta2025okounkovsconjecturebpslie,
      title={Okounkov's conjecture via BPS Lie algebras}, 
      author={Tommaso Maria Botta and Ben Davison},
      year={2025},
      eprint={2312.14008},
      archivePrefix={arXiv},
      primaryClass={math.RT},
      url={https://arxiv.org/abs/2312.14008}, 
}

@misc{schiffmann2024cohomologicalhallalgebrasquivers,
      title={Cohomological Hall algebras of quivers and Yangians}, 
      author={Olivier Schiffmann and Eric Vasserot},
      year={2024},
      eprint={2312.15803},
      archivePrefix={arXiv},
      primaryClass={math.RT},
      url={https://arxiv.org/abs/2312.15803}, 
}

@misc{hausel2025pwmathcalh2,
      title={$P=W$ via $\mathcal{H}_2$}, 
      author={Tamas Hausel and Anton Mellit and Alexandre Minets and Olivier Schiffmann},
      year={2025},
      eprint={2209.05429},
      archivePrefix={arXiv},
      primaryClass={math.AG},
      url={https://arxiv.org/abs/2209.05429}, 
}

@misc{feyzbakhsh2025derivedcategorycoherentsystems,
      title={Derived category of coherent systems on curves and stability conditions}, 
      author={Soheyla Feyzbakhsh and Aliaksandra Novik},
      year={2025},
      eprint={2511.01601},
      archivePrefix={arXiv},
      primaryClass={math.AG},
      url={https://arxiv.org/abs/2511.01601}, 
}

@article {Porta2019TwodimensionalCH,
    AUTHOR = {Porta, Mauro and Sala, Francesco},
     TITLE = {Two-dimensional categorified {H}all algebras},
   JOURNAL = {J. Eur. Math. Soc. (JEMS)},
  FJOURNAL = {Journal of the European Mathematical Society (JEMS)},
    VOLUME = {25},
      YEAR = {2023},
    NUMBER = {3},
     PAGES = {1113--1205},
      ISSN = {1435-9855,1435-9863},
   MRCLASS = {14F08 (14H60 14J60 17B37 55P99)},
  MRNUMBER = {4577961},
MRREVIEWER = {Joan\ Pons-Llopis},
       DOI = {10.4171/jems/1303},
       URL = {https://doi.org/10.4171/jems/1303},
}

\end{document}